\newcommand{\E}{{\mathbb E}} \newcommand{\bC}{{\mathbb C}}
\newcommand{\R}{{\mathbb R }}
 \newcommand{\cA}{\mathcal A}
\newcommand{\cL}{\mathcal L}
\newcommand{\Tr}{\mathrm{Tr }}
\begin{document}

\title{Optimal non-reversible linear drift for the convergence to
  equilibrium of a diffusion\thanks{This work is supported by the Agence Nationale de la Recherche, under grant ANR-09-BLAN-0216-01 (MEGAS). This work was initiated while the two last authors had a INRIA-sabbatical semester and year in the INRIA-project team MICMAC. The research of GP is partially supported by the EPSRC under grants No. EP/H034587 and No. EP/J009636/1.}
}

\titlerunning{Optimal non-reversible drift}        

\author{T. Leli\`evre \and
F. Nier  \and G.A. Pavliotis}


\institute{T. Leli\`evre  at
              Universit\'e Paris-Est, Cermics,
              and INRIA, MicMac project team,
              Ecole des ponts, 
              6-8 avenue Blaise Pascal,
              77455 Marne la Vall\'ee Cedex  2, France.
              \email{lelievre@cermics.enpc.fr}           
            \and
F. Nier at IRMAR, Campus de Beaulieu, Universit\'e de Rennes 1, 35042
Rennes, France.
\email{francis.nier@univ-rennes1.fr}
            \and
G.A. Pavliotis at Imperial College London, Department of Mathematics,
South Kensington Campus, London SW7 2AZ, England.
 \email{g.pavliotis@imperial.ac.uk}
}

\date{Received: date / Accepted: date}

\maketitle

\begin{abstract}
We consider non-reversible perturbations of reversible diffusions that do
not alter the invariant distribution and we ask whether there exists
an optimal perturbation such that the rate of convergence to
equilibrium is maximized.  We solve this problem for the case of linear
drift by proving the existence of such optimal perturbations and by
providing an easily implementable algorithm for constructing them. We
discuss in particular the role of the prefactor in the exponential
convergence estimate. Our
rigorous results are illustrated by numerical experiments.
\keywords{Non-reversible diffusion \and Convergence to equilibrium
  \and Wick calculus}
\end{abstract}

\section{Introduction}
\label{sec:intro}

\subsection{Motivation}

The problem of convergence to equilibrium for diffusion processes has
attracted considerable attention in recent years. In addition to the
relevance of this problem for the convergence to equilibrium
of some systems in statistical
physics, see for example~\cite{Marko_Vill00}, such questions are also important in
statistics, for example in the analysis of Markov Chain Monte Carlo
(MCMC) algorithms~\cite{Diaconis2009}. Roughly speaking, one measure of efficiency of an
MCMC algorithm is its rate of convergence to equilibrium, and
increasing this rate is thus the aim of many numerical techniques (see
for example~\cite{chopin-lelievre-stoltz-12}).

Let us recall the basic approach for a reversible diffusion. Suppose
that 
we are interested in sampling from a probability distribution function
\begin{equation}\label{eq:psi_infty}
\psi_{\infty}= \frac{e^{-V}}{\int_{\R^N} e^{-V}~dx}\,,
\end{equation}
where $V: \R^N \to \R$ is a given smooth potential such that
$\int_{\R^N} e^{-V}~dx< \infty$\,. A natural dynamics to use is the
reversible dynamics
\begin{equation}\label{e:reversible} dX_{t} = -\nabla V(X_{t}) \, dt +
\sqrt{2} \, dW_{t}\,,
\end{equation}
where $W_t$ denotes a standard $N$-dimensional Brownian motion.
Let us denote by $\psi_{t}$ the probability density function of the process $X_{t}$
at time~$t$. It satisfies the Fokker-Planck equation
\begin{equation}\label{e:FP}
\partial_{t} \psi_{t} = \nabla \cdot \left(\nabla V \psi_{t} + \nabla
\psi_{t} \right)\,.
\end{equation}
Under appropriate assumptions on the potential $V$
(e.g. that $\frac{1}{2} |\nabla V(x)|^{2} - \Delta V(x) \rightarrow
+\infty$ as $|x| \rightarrow +\infty$\,, see~\cite[A.19]{villani-09}),
the density $\psi_\infty$ satisfies a Poincar\'e inequality: there
exists $\lambda >0$ such that for all probability density functions $\phi$,
\begin{equation}\label{eq:poincare}
\int_{\R^N} \left(\frac{\phi}{\psi_\infty} -1 \right)^2 \psi_\infty dx
\le 
\frac{1}{\lambda} \int_{\R^N} \left|\nabla \left(
    \frac{\phi}{\psi_\infty}\right) \right|^2 
\psi_\infty dx\,.
\end{equation}
The optimal parameter $\lambda$ in~\eqref{eq:poincare} is the opposite of the
smallest (in absolute value) non-zero eigenvalue of the Fokker-Planck operator $\nabla \cdot \left(\nabla V \cdot + \nabla
\cdot \right)$, which is self-adjoint in $L^{2}(\R^{N} ,
\psi_{\infty}^{-1} \, dx)$ (see~\eqref{eq:FP_rev} below). Thus, $\lambda$ is also called the spectral gap of the Fokker-Planck operator.

It is then standard to show that~\eqref{eq:poincare} is equivalent to the following inequality, which shows exponential convergence to the equilibrium  for~\eqref{e:reversible}: for all initial conditions $\psi_0 \in L^{2}(\R^{N},\psi_{\infty}^{-1} dx)$, for all times $t \ge 0$,
\begin{equation}
\label{e:converg} 
\|\psi_{t} - \psi_{\infty} \|_{L^2(\psi_\infty^{-1})} \leq
e^{- \lambda t}\| \psi_{0} - \psi_{\infty} \|_{L^2(\psi_\infty^{-1} )}\,,
\end{equation}
 where  $\| \cdot \|_{L^2(\psi_\infty^{-1} )}$ denotes the norm in $L^{2}(\R^{N},
\psi_{\infty}^{-1} )$, namely
$\|f\|_{L^2(\psi_\infty^{-1})}^2=\int_{\R^N} f^2(x)
\psi_{\infty}^{-1}(x) \, dx$\,.
This equivalence is a simple consequence of the following identity: if $\psi_t$ is solution to~\eqref{e:FP}, then
\begin{equation}\label{eq:ent_eq}
\frac{d}{dt}  \|\psi_{t} - \psi_{\infty} \|_{L^2(\psi_\infty^{-1})}^2
= -2 \int_{\R^N} \left| \nabla \left(\frac{\psi_t}{\psi_\infty}\right)
\right|^2 \psi_\infty dx\,.
\end{equation}

In view of~\eqref{e:converg}, the algorithm is efficient if $\lambda$
is large, which is typically not the case if $X_t$ is a metastable
process (see~\cite{lelievre-12}). A natural question is therefore how to
design a Markovian dynamics which converges to equilibrium distribution $\psi_\infty$ (much) faster than~\eqref{e:reversible}. There are many approaches (importance sampling methods, constraining techniques, see for example~\cite{lelievre-rousset-stoltz-book-10}), and the focus here is on modifying the dynamics~\eqref{e:reversible} to a non-reversible dynamics, which has the same invariant measure.

\subsection{Non-reversible diffusion}

As noticed in~\cite{Hwang_al1993,Hwang_al2005}, one way to accelerate
the convergence to equilibrium is to depart from reversible
dynamics (see also~\cite{diaconis-miclo-12} for related discussions for Markov Chains). Let us recall that the dynamics~\eqref{e:reversible} is
reversible in the sense that if $X_0$ is distributed according to
$\psi_\infty(x) \, dx$, then $(X_t)_{0 \le t \le T}$ and $(X_{T-t})_{0 \le t \le T}$ have the same law. This is equivalent to the fact that the Fokker-Planck operator is self-adjoint in $L^{2}(\R^{N},\psi_{\infty}^{-1} dx)$: 
\begin{equation}\label{eq:FP_rev}
\begin{aligned}
\int_{\R^N} \nabla \cdot \left(\nabla V \psi + \nabla
\psi \right) \phi \, \psi_\infty^{-1}dx&= -\int_{\R^N} \nabla
\left(\psi \psi_\infty^{-1} \right) \cdot \nabla \left( \phi
  \psi_\infty^{-1}\right) \psi_\infty dx\\
& =\int_{\R^N} \nabla \cdot \left(\nabla V \phi + \nabla
\phi \right) \psi \, \psi_\infty^{-1}dx\,.
\end{aligned}
\end{equation}
Now, a natural {\em non-reversible} dynamics to sample from the
distribution $\psi_\infty(x) \, dx=\frac{e^{-V(x)}}{\int_{\R^N} e^{-V}~dx}\, dx$ is:
\begin{equation}\label{e:nonreversible} dX^{b}_{t} = \big(-\nabla
V(X^{b}_{t}) + b(X^{b}_{t}) \big) \, dt + \sqrt{2} \, dW_{t},
\end{equation} 
where $b$ is taken to be divergence-free with respect to the invariant
distribution 
$\psi_{\infty}(x)\,dx$:
\begin{equation}\label{e:divergence_free} \nabla \cdot \big( b e^{-V}
\big) = 0,
\end{equation}
 so that $\psi_\infty(x) \, dx $ is still the invariant measure of the dynamics~\eqref{e:nonreversible}. A general way to construct such a $b$ is to consider 
\begin{equation}\label{e:perturb-gen}
b=J \nabla V,
\end{equation} 
where $J$ is a constant antisymmetric matrix.

It is important to note that the
dynamics~\eqref{e:nonreversible} is non-reversible. Indeed, one can
check that $(X^b_t)_{0 \le t \le T}$ has the same law as
$(X^{-b}_{T-t})_{0 \le t \le T}$ (notice the minus sign in front of
$b$), and thus not the same law as $(X^{b}_{T-t})_{0 \le t \le T}$. Likewise, Equation~\eqref{eq:FP_rev} now becomes:
\begin{equation}\label{eq:FP_nonrev}
\int_{\R^N} \nabla \cdot \left((\nabla V-b) \psi + \nabla
\psi \right) \phi \, \psi_\infty^{-1}dx
=\int_{\R^N} \nabla \cdot \left((\nabla V+b) \phi + \nabla
\phi \right) \psi \, \psi_\infty^{-1}dx\,.
\end{equation}
Again, notice the change of sign in front of $b$.

From~\eqref{e:perturb-gen} it is clear that there are many (in fact, infinitely many) different ways to modify the reversible dynamics without changing the invariant measure. A natural question
is whether the addition of a non-reversible term can improve
the rate of convergence to equilibrium and, if so, whether there
exists an optimal choice for the perturbation that maximizes the rate
of convergence to equilibrium. The goal of this paper is to present a complete solution to this problem when the drift term
in~\eqref{e:nonreversible} is linear.

More precisely, let $\psi^{b}_{t}$ denote the law of the process $X_{t}^{b}$, {\em i.e.} the
solution to the Fokker-Planck equation
\begin{equation}\label{e:FPnonsymm}
\partial_{t} \psi^b_{t} = \nabla \cdot \left((\nabla V - b) \psi^b_{t} +
\nabla \psi^b_{t} \right).
\end{equation}
Using the fact that $\psi_\infty$ is a stationary solution
to~\eqref{e:FPnonsymm} (which is equivalent
to~\eqref{e:divergence_free}) and under the assumption that
$\psi_\infty$ satisfies the Poincar\'e inequality~\eqref{eq:poincare},
one can check that the upper bound for the reversible
dynamics~\eqref{e:reversible} is still valid:
\begin{equation}\label{e:converg_nonsymm} \|\psi^b_{t} - \psi_{\infty} \|_{L^2(\psi_\infty^{-1})} \leq
e^{- \lambda t}\| \psi^b_{0} - \psi_{\infty} \|_{L^2(\psi_\infty^{-1})}\,.
\end{equation} 
Actually, as in the reversible case,~\eqref{e:converg_nonsymm} (for
all initial conditions $\psi^b_{0}$) is equivalent to~\eqref{eq:poincare}.
This is because~\eqref{eq:ent_eq} also holds for $\psi^b$ solution to~\eqref{e:FPnonsymm}. In other words, adding a non-reversible part to the dynamics cannot be worse than the original dynamics~\eqref{e:reversible} (where $b=0$) in terms of exponential rate of convergence.

What we show below (for a linear drift) is that it is possible to
choose $b$ in order to obtain a convergence at exponential rate of the form:
\begin{equation}\label{e:converg_exp_nonsymm} 
\|\psi_{t}^{b} -
\psi_{\infty} \|_{L^2(\psi_\infty^{-1})} \leq C(V, \,b) e^{- \overline{\lambda} t}\| \psi_{0}^{b} -
\psi_{\infty} \|_{L^2(\psi_\infty^{-1})}\,,
\end{equation}
with $\overline{\lambda} > \lambda$ and $C(V,b)>1$.
It is important to note the presence of the constant
$C(V,b)$ in the right-hand side
of~\eqref{e:converg_exp_nonsymm}. For a reversible diffusion ($b=0$),
the spectral theorem forces the optimal $C(V,0)$ to be equal to one,
and $\overline{\lambda}=\lambda$, the Poincar\'e inequality constant of
$\psi_\infty$ (since~\eqref{e:converg}
implies~\eqref{eq:poincare}). The interest in adding a non-reversible
perturbation is precisely to allow for a constant $C(V,b)>1$\,, which
permits a rate $\overline{\lambda} > \lambda$\,. 
The difficulty is thus to design a $b$ such that $\overline{\lambda}$
is large and $C(V,b)$ is not too large. In the following, we adapt a two-stage strategy: we first optimize $b$ in order to get the largest possible $\overline{\lambda}$, and then we discuss how the constant $C(V,b)$ behaves for this optimal rate of convergence.


\subsection{Bibliography}

This problem was studied in~\cite{Hwang_al1993} for a linear drift (namely $V$ is quadratic and $b$ is linear)
and in~\cite{Hwang_al2005} for the general case. It was shown in these
works that the addition of a drift function $b$
satisfying~\eqref{e:divergence_free} helps to speed up convergence to
 equilibrium. Furthermore, the optimal convergence rate was obtained
for the linear problem (see also Proposition~\ref{prop:spectrumB} in
the present paper) and some explicit examples were presented, for ordinary differential equations in two and three dimensions.

The behavior of the generator of the dynamics~\eqref{e:nonreversible}
under a strong non-reversible drift has also been
studied~\cite{Berestycki_Hamel_Nadirashvili,ConstKiselRyzhZl06,Franke_al2010}. It was shown
in~\cite{Franke_al2010} that the spectral gap attains a finite value
in the limit as the strength of the perturbation becomes infinite if
and only if the operator $b \cdot \nabla$ has no eigenfunctions in an appropriate
Sobolev space of index $1$. These works, although relevant to our work, are not directly
related to the present paper since our main focus is in obtaining the
optimal perturbation rather than an asymptotic result. The effect of non-reversible perturbations to the constant in logarithmic Sobolev inequalities (LSI) for diffusions have also been studied, see~\cite{ArnoldCarlenJu2008}. In this paper examples were presented where the addition of a non-reversible perturbation can improve the constant in the LSI. 

This work is also related to~\cite{girolami-calderhead-11}, where the
authors use another idea to enhance the convergence to
equilibrium. The principle is to keep a reversible diffusion, but to change
the underlying Riemannian metric by considering $$dX^M_t=- D \nabla V
(X^M_t) \, dt +\sqrt{2D} dW_t$$
for a well chosen matrix $D$. More precisely, the authors apply this
technique to a Hybrid Monte Carlo scheme. It would be interesting to set up some test cases in order to
compare the two approaches: non-reversible drift {\em versus} change
of the underlying metric.

Finally, we would like to mention related recent works on spectral
properties of non-selfadjoint operators~see for example
\cite{Davies,GGN,Trefethen_Embree} and references therein.

\subsection{Outline of the paper}

In this paper, we study the case of a linear drift. Namely, we
consider~\eqref{e:reversible} with a quadratic potential
\begin{equation}\label{e:quadratic} V(x) =\frac{1}{2} x^{T} S x,
\end{equation} where $S$ is a positive definite $N \times N$
symmetric matrix. In the following, we denote $\mathcal{S}_{N}(\R)$ the set of
symmetric matrices and $\mathcal{S}_{N}^{>0}(\R)$ the set
of positive definite symmetric matrices. The equilibrium distribution
thus has the density
\begin{equation}\label{eq:psi_infty_lin}
\psi_\infty(x)=\frac{\det(S)^{1/2}}{(2\pi)^{N/2}} \exp\left(-\frac{x^TSx}{2}\right)\,.
\end{equation}
It can be checked that if the vector field $b(x)$ is linear, it satisfies~\eqref{e:divergence_free} if and only if $b =
-J S x$ with $J= -J^{T}$ an antisymmetric real matrix, see Lemma~\ref{lem:nonrev_quad}. For a given $S$, the question is thus how to choose $J$ in order to optimize the rate of convergence to equilibrium for the dynamics~\eqref{e:nonreversible}, which in our case becomes:
\begin{equation}\label{e:nonrev_lin}
dX^J_t=-(I+J)S X^J_t \, dt + \sqrt{2}\, dW_t,
\end{equation}
where $I$ denotes the identity matrix in $\mathcal{M}_{N}(\R)$, the set of $N \times N $ real valued matrices.

We provide an answer to this question. In particular:
\begin{enumerate}
\item We prove that it is possible to build an optimal $J$ (denoted $J_{opt}$), which yields the best possible rate $\overline{\lambda}$ (denoted $\lambda_{opt}$) in~\eqref{e:converg_exp_nonsymm}.
\item We provide an algorithm for constructing an optimal matrix $J_{opt}$.
\item We obtain estimates on the constant $C(V, \,b) = C (S, \, J)$
in~\eqref{e:converg_exp_nonsymm}.
\end{enumerate}
It appears that this procedure becomes particularly relevant in the
situation when the condition number of $S$ is large (namely for
an original dynamics with multiple timescales, see
Sections~\ref{sec:2D} and~\ref{sec:num}). Discussions about the
size of  $C(V,b)$
with respect to this conditioning and to the dimension $N$ can be
carried out very accurately.

The reason why the case of linear drift is amenable to
analysis is because it can be reduced to a linear
algebraic problem, at least for the calculation of $\lambda_{opt}$ and the
construction of $J_{opt}$. One way to understand this is the following remark: the spectrum of an operator of the form
(which is precisely the form of the generator of the
dynamics~\eqref{e:nonrev_lin})
\begin{equation}\label{e:generator} 
\cL =  -(B x) \cdot \nabla  + \Delta\,,
\end{equation}
can be computed in terms of the eigenvalues of the
matrix $B$. Here, $B$ denotes any real square matrix with positive
real spectrum. In~\cite{Metafune_al2002} (see also~\cite{ottobre_pavliotis_pravda-starov,ottobre_pavliotis_pravda-starov_preprint} and
Proposition~\ref{prop:spectLJ} below), it was indeed proven that the spectrum
of $\cL$ in $L^{p}$ spaces weighted by the invariant measure
of the dynamics ($p > 1$) consists of integer linear combinations of eigenvalues
of $B$:
\begin{equation}\label{e:sprectrum} \sigma (\cL) =
\left\{-\sum_{j=1}^{r} n_{j} \lambda_{j}, \; n_{j} \subset \mathbb{N}
\right\},
\end{equation} where $\left\{ \lambda_{j} \right\}_{j=1}^{r}$ denote
the $r$ (distinct) eigenvalues of $B$. In particular, the spectral gap
of the generator $\cL$ is determined by the eigenvalues of $B$, and this yields a simple way to design the optimal matrix $J_{opt}$. On the other hand, the
control of the constant $C(S,  J)$ requires a more elaborate
analysis, using Wick (in the sense of Wick ordered) calculus, see Section~\ref{sec:returnLJ} below.

Compared to the related previous paper~\cite{Hwang_al1993}, our
contributions are threefold: (i) we propose an algorithm to build the
optimal matrix $J_{opt}$, (ii) we discuss how to get estimates on the
constant $C(S,J)$ and (iii) we consider the longtime behavior of the
partial differential equation~\eqref{e:FPnonsymm} and not only of
ordinary differential equations related to~\eqref{e:nonrev_lin}. In
particular, our analysis covers also non Gaussian initial conditions
for the SDE~\eqref{e:nonrev_lin}. Although the results that we obtain have
a limited practical interest (there exist many efficient techniques
to draw Gaussian random variables), we believe that this study is a
first step towards further analysis, in particular for nonlinear drift terms.

The rest of the paper is organized as follows. In
Section~\ref{sec:main} we present the main results of this paper. In
Section~\ref{sec:resc} we perform some preliminary calculations. The linear algebraic problem
and the evolution of the corresponding ordinary differential equation
are studied in Section~\ref{sec:linalg}. Direct computations of the
expectations and the variances are performed in
Section~\ref{sec:gauss} for Gaussian initial data. The convergence to
 equilibrium for the non-reversible diffusion process for general
 initial data is then studied in
Section~\ref{sec:diffusion}. Results of numerical simulations are
presented in Section~\ref{sec:num}. Finally, some background material on
Wick calculus, which is needed in the proofs of our main results, is
presented in Appendix~\ref{sec:wickcalculus}.
%
%
%
%
\subsection{Main results}
\label{sec:main}
For a potential given by~\eqref{e:quadratic}, our first result is a
simple lemma which characterizes  all non-reversible perturbations that
satisfy divergence-free condition~\eqref{e:divergence_free}.
\begin{lemma}\label{lem:nonrev_quad} Let $V(x)$ be given
by~\eqref{e:quadratic} and let $b(x) = -A x$ where $A \in {\mathcal M}_N(\R)$. Then~\eqref{e:divergence_free} is satisfied if and only if
\begin{equation}\label{e:div_free_quad} A = J S\,, \quad\mbox{with} \;\;
J = - J^{T}\,.
\end{equation}
\end{lemma}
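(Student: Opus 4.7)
The plan is to expand the divergence-free condition $\nabla\cdot(be^{-V}) = 0$ directly and convert it into an algebraic identity on $A$, then recognize that identity as equivalent to $A = JS$ with $J$ antisymmetric.

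First I would compute $\nabla \cdot(b e^{-V}) = \bigl(\nabla\cdot b - b\cdot \nabla V\bigr)e^{-V}$, so the condition is
\begin{equation*}
\nabla\cdot b \;=\; b\cdot \nabla V.
\end{equation*}
With $b(x) = -Ax$ and $\nabla V(x) = Sx$, the left-hand side equals the constant $-\operatorname{Tr}(A)$, while the right-hand side equals $-(Ax)^T(Sx) = -x^T A^T S x = -\tfrac12 x^T(A^T S + SA)x$ using that $S$ is symmetric. So the condition becomes
\begin{equation*}
\tfrac12\, x^T(A^T S + SA)\,x \;=\; \operatorname{Tr}(A) \qquad \text{for all } x\in\R^N.
\end{equation*}

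The second step is to observe that a quadratic form can equal a constant on all of $\R^N$ only if both sides vanish identically. This yields the two conditions
\begin{equation*}
A^T S + SA = 0 \qquad \text{and}\qquad \operatorname{Tr}(A) = 0.
\end{equation*}
Since $S$ is invertible, I would then set $J := A S^{-1}$, i.e.\ $A = JS$, and check that the first condition is equivalent to $J = -J^T$: indeed, $J^T = S^{-1}A^T$ (as $S^{-1}$ is symmetric), so $S(J + J^T)S = SA + A^T S$, and invertibility of $S$ gives the equivalence. Finally, once $A = JS$ with $J^T = -J$, the trace condition comes for free, since
\begin{equation*}
\operatorname{Tr}(JS) = \sum_{i,j} J_{ij}S_{ij} = -\sum_{i,j} J_{ji}S_{ji} = -\operatorname{Tr}(JS),
\end{equation*}
so $\operatorname{Tr}(A) = 0$ automatically. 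Conversely, if $A = JS$ with $J$ antisymmetric, a direct computation shows $A^T S + SA = S^T J^T S + SJS = -SJS + SJS = 0$ and $\operatorname{Tr}(A) = 0$, so $\nabla\cdot(be^{-V}) = 0$.

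There is no substantive obstacle here: the lemma is a pure linear-algebra reformulation of the divergence-free identity. The only point that requires any care is the separation of the quadratic and constant terms when reading off the two algebraic conditions from a single identity, and the use of invertibility of $S$ to pass between $A$ and $J$.
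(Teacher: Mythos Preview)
Your proof is correct and follows essentially the same route as the paper: expand the divergence, separate the constant term $\operatorname{Tr}(A)$ from the quadratic form $x^T A^T S x$, conclude both must vanish, and then substitute $J = AS^{-1}$ to recast the quadratic condition as antisymmetry of $J$. Your write-up is slightly more explicit (symmetrizing the quadratic form, noting the trace condition is automatic), but there is no substantive difference.
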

\begin{proof} Equation~\eqref{e:divergence_free} with $b = - A x$ and
quadratic potential~\eqref{e:quadratic} gives
$$
\nabla \cdot \left(A x \, e^{-\frac{x^{T} S x}{2}} \right) = 0
$$
which is equivalent to
$$
\Tr (A) +  (Ax)^T(S x) = 0 \quad \forall x \in
\R^{N}\,.
$$
This is equivalent to the conditions
$$
\Tr(A) = 0 \quad \text{and} \quad (A^{T} S) = - (A^{T} S)^{T}\,.
$$
Set now $J = A S^{-1}$\,. We have
$$
\Tr (J S) = 0 \quad \text{and} \quad S (J +J^{T}) S = 0\,,
$$
which is equivalent to
$$
J = - J^{T}\,.
$$
\end{proof}

We will denote the set of $N\times N$ real antisymmetric matrices by
$\cA_{N}(\R)\subset\mathcal{M}_{N}(\R)$\,. The following result concerns the optimization of the spectrum of the matrix $B_{J}=(I+J)S$, which appears in the drift of the dynamics~\eqref{e:nonrev_lin} and plays a crucial role in the analysis; see Equation~\eqref{e:sprectrum}.
\begin{theorem}\label{thm:lin_alg}
 Define $B_{J} = (I + J) S$\,. Then
\begin{equation}\label{e:minmax} \max_{J \in \cA_{N}(\R)} \min {\rm Re}
\left(\sigma (B_{J}) \right) = \frac{\Tr (S)}{N}\,.
\end{equation}
Furthermore, one can construct matrices $J_{opt} \in
\cA_{N}(\R)$ such that the maximum in~\eqref{e:minmax} is attained.
The matrix $J_{opt}$ can be chosen so that
 the semi-group associated to $B_{J_{opt}}$ satisfies the bound
\begin{equation}\label{eq:lin_alg}
\left\|e^{-(I+J_{opt})S t} \right\| \leq C_{N}^{(1)} \kappa(S)^{1/2} \exp
\left(-\frac{\Tr (S)}{N} t\right)\,,
\end{equation}
where  the matricial norm is induced by
the euclidean norm on $\R^{N}$ and $\kappa (S)=\|S\|\, \|S^{-1}\|$ denotes the condition number.
\end{theorem}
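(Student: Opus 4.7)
The plan is to split the theorem into three claims and address them in order: the upper bound $\min \mathrm{Re}\,\sigma(B_J) \le \Tr(S)/N$ for every antisymmetric $J$, the existence and explicit recursive construction of a minimizer $J_{opt}$, and the semigroup estimate. For the upper bound I first observe that for any antisymmetric $J$ and symmetric $S$, $\Tr(JS) = \Tr((JS)^T) = \Tr(-SJ) = -\Tr(JS)$, so $\Tr(JS) = 0$ and $\Tr(B_J) = \Tr(S)$ for every $J$; since the non-real eigenvalues of a real matrix come in complex conjugate pairs, the sum of real parts of $\sigma(B_J)$ equals $\Tr(S)$, and the minimum is therefore at most $\bar s := \Tr(S)/N$. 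For the constructive parts, it is convenient to pass to the similarity $S^{1/2}B_J S^{-1/2} = S^{1/2}(I+J)S^{1/2} = S + A$ with $A := S^{1/2} J S^{1/2}$ antisymmetric: as $J$ ranges over $\cA_N(\R)$, so does $A$, and the problem is equivalent to constructing antisymmetric $A$ for which every eigenvalue of $S + A$ has real part $\bar s$.

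For this construction I would proceed by induction on $N$. The base case $N=1$ is trivial ($S=\bar s$, $A=0$). For $N\ge 2$ the key input is the existence of a $2$-plane $V \subset \R^N$ with $\Tr(P_V S P_V) = 2\bar s$: labelling the eigenvalues of $S$ as $s_1 \le \dots \le s_N$, a short rearrangement argument gives $s_1+s_2 \le 2\bar s \le s_{N-1}+s_N$, while the continuous map $V \mapsto \Tr(P_V S P_V)$ attains these two values as its extremes on the connected Grassmannian of $2$-planes, so the intermediate value theorem supplies the desired $V$. Writing $S$ and $A$ in block form with respect to $V \oplus V^\perp$ and choosing the off-diagonal block $A_{12}:=S_{12}$ makes $S+A$ block upper triangular,
\[
S+A = \begin{pmatrix} S_{11}+A_{11} & 2S_{12} \\ 0 & S_{22}+A_{22} \end{pmatrix},
\]
so $\sigma(S+A) = \sigma(S_{11}+A_{11}) \cup \sigma(S_{22}+A_{22})$. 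The $2\times 2$ upper-left block has trace $2\bar s$; choosing $A_{11} = \bigl(\begin{smallmatrix} 0 & a \\ -a & 0 \end{smallmatrix}\bigr)$ with $a$ large enough yields a negative discriminant, so its two eigenvalues are $\bar s \pm i\mu_1$. The lower-right block $S_{22}$ is SPD of size $(N-2)\times(N-2)$ with $\Tr(S_{22})=(N-2)\bar s$, i.e.\ $\Tr(S_{22})/(N-2) = \bar s$, so the induction hypothesis applied to $S_{22}$ furnishes an antisymmetric $A_{22}$ with $\sigma(S_{22}+A_{22}) \subset \{\mathrm{Re}\,z = \bar s\}$. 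Reassembling gives $A$, and $J_{opt} := S^{-1/2} A S^{-1/2}$ attains the maximum in~\eqref{e:minmax}.

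For the semigroup bound, the similarity $B_{J_{opt}} = S^{-1/2}(S+A)S^{1/2}$ immediately yields $\|e^{-B_{J_{opt}}t}\| \le \|S^{-1/2}\|\,\|S^{1/2}\|\,\|e^{-(S+A)t}\| = \kappa(S)^{1/2}\,\|e^{-(S+A)t}\|$, so it suffices to establish the dimensional bound $\|e^{-(S+A)t}\| \le C_N^{(1)}\, e^{-\bar s t}$. Writing $S+A = \bar s I + M$, the matrix $M$ is block upper triangular with trace-zero $2\times 2$ diagonal blocks $M_{kk}$ of purely imaginary spectrum $\{\pm i\mu_k\}$; Cayley-Hamilton gives $M_{kk}^2 = -\mu_k^2 I_2$, hence $e^{-M_{kk}t} = \cos(\mu_k t)I_2 - \mu_k^{-1}\sin(\mu_k t)M_{kk}$ is uniformly bounded in $t$, and propagating along the off-diagonal blocks $2S_{ij}$ via Duhamel's formula expresses $e^{-Mt}$ as a (finite) combination of these block exponentials with coefficients depending on $t$ and on the $\mu_k$. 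The main obstacle I anticipate is precisely this last step: producing a clean constant $C_N^{(1)}$ that depends only on $N$ and not on the parameters $a$, $\mu_k$ or on the off-diagonal blocks requires a careful orchestration of the recursion (tuning the $\mu_k$ so as to avoid resonances, or equivalently building a Lyapunov quadratic form $Q$ adapted to the block decomposition with controlled conditioning), and this is where the heart of the proof lies, whereas the upper bound and the construction of $J_{opt}$ itself are comparatively routine.
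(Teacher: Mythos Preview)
Your upper bound argument is correct and coincides with the paper's. Your construction of $J_{opt}$ is also correct but genuinely different: the paper does not block-triangularize via $2$-planes. Instead it first builds (by a one-vector-at-a-time rotation argument, in the same spirit as your Grassmannian step) an orthonormal basis $(\psi_k)$ with $(\psi_k,S\psi_k)=\bar s$ for every $k$, then chooses any distinct positive $\lambda_1,\dots,\lambda_N$, sets $Q=\sum_k\lambda_k\psi_k\psi_k^T$, and defines $\tilde J$ by the single commutation relation
\[
\tilde J Q-Q\tilde J=-QS-SQ+2\bar s\,Q,
\]
which in that basis reads $(\psi_j,\tilde J\psi_k)=-\frac{\lambda_j+\lambda_k}{\lambda_j-\lambda_k}(\psi_j,S\psi_k)$. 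This is equivalent to saying $S+\tilde J$ is diagonalizable with spectrum on $\bar s+i\R$, so as an existence proof it is interchangeable with yours.

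Where the two approaches diverge decisively is the semigroup bound, and you have correctly identified the obstacle. Your block upper-triangular $S+A=\bar s I+M$ has off-diagonal blocks $2S_{ij}$ whose norms scale like $\|S\|$; the Duhamel integral you wrote down then produces, in the non-resonant case, terms of size $\|S_{ij}\|/|\mu_j\pm\mu_k|$, and in the resonant case a secular factor $t$. There is no way to tune the $\mu_k$ alone to make this bounded independently of $S$: the ratio $\|S\|/\bar s$ can be as large as $\kappa(S)$, so at best you recover an extra power of $\kappa(S)$ rather than the stated $\kappa(S)^{1/2}$. The paper's point is that the matrix $Q$ it used to \emph{define} $\tilde J$ is already a Lyapunov form: the commutation relation above gives exactly
\[
\frac{d}{dt}\|y_t\|_{Q^{-1}}^2=-2\bar s\,\|y_t\|_{Q^{-1}}^2,
\]
hence $\|e^{-(S+\tilde J)t}\|\le\kappa(Q)^{1/2}e^{-\bar s t}$, and since $\lambda_1,\dots,\lambda_N$ are free one can take e.g.\ $\lambda_k=N+k$ to get $\kappa(Q)\le 2$ uniformly in $S$ and $N$. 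The factor $\kappa(S)^{1/2}$ then appears only from the similarity $B_J=S^{-1/2}(S+\tilde J)S^{1/2}$. So the Lyapunov matrix is not an afterthought to be ``adapted to the block decomposition'' as you suggest: it is baked into the construction of $J_{opt}$ from the outset, and this is precisely what your recursive $2$-plane scheme lacks. If you want to rescue your construction for the quantitative estimate, you would have to exhibit such a $Q$ for your specific $A$ with $\kappa(Q)$ controlled independently of $S$, which amounts to redoing the paper's construction on top of yours.
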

Theorem~\ref{thm:lin_alg} is a straightforward consequence of Proposition~\ref{pr:opt}
and Proposition~\ref{prop:conv_ode} below, with an explicit expression
for the constant $C_N^{(1)}$ given by~\eqref{eq:CN_ODE}. This
expression allows to  discuss the dependence of $C_N^{(1)}$ on the dimension~$N$ (see
Remark~\ref{rem:CN_ODE} for details).

The partial differential equation version of this result requires to introduce the generator
$$
\mathcal{L}_{J}=-(B_{J}x) \cdot \nabla +\Delta
$$
of the semigroup $(e^{t\mathcal{L}_{J}})_{t\geq 0}$ considered in $L^{2}(\R^{N},
\psi_\infty dx ; \bC)$, where, we recall (see~\eqref{eq:psi_infty_lin}),
 $$\psi_\infty(x)=\frac{\det(S)^{1/2}}{(2\pi)^{N/2}} \exp\left(-\frac{x^TSx}{2}\right).$$ Here $L^{2}(\R^{N},
\psi_\infty dx ; \bC)$ denotes the set of functions $f:\R^N \to \bC$ such that $\int_{\R^N} |f|^2(x) \psi_\infty(x) \, dx < \infty$.
\begin{theorem}\label{th:CV_KB}
For $B_{J}=(I+J)S$ with $J\in \mathcal{A}_{N}$\,, the drift-diffusion operator
$\mathcal{L}_{J}=-(B_{J}x).\nabla +\Delta$ defined in $L^{2}(\R^{N},
\psi_\infty dx;\bC)$ with domain of definition 
$$
D(\mathcal{L}_{J})=\left\{u\in L^{2}(\R^{N},
\psi_\infty dx ; \bC)\,,~ \mathcal{L}_{J}u\in L^{2}(\R^{N},
\psi_\infty dx ; \bC)
\right\}
$$
generates a contraction semigroup $(e^{t\mathcal{L}_{J}})_{t\geq
  0}$ and it has a compact resolvent. Optimizing its spectrum with respect to $J$  gives
\begin{equation}
\label{e:minmaxop} \max_{J \in \cA_{N}(\R)} \min {\rm Re}
\left(\sigma (-\mathcal{L}_{J}) \setminus \{0\} \right) = \frac{\Tr (S)}{N}\,.
\end{equation}
Furthermore, the maximum in~\eqref{e:minmaxop} is attained for the matrices $J_{opt}\in \mathcal{A}_{N}(\R)$ constructed
as in Theorem~\ref{thm:lin_alg}. The matrix $J_{opt}$ can be chosen
so that
\begin{equation}\label{eq:CV_KB}
\begin{aligned}
&\left\|e^{t\mathcal{L}_{J_{opt}}}u -\left(\int_{\R^N} u
\psi_\infty dx\right)\right\|_{L^{2}(\psi_\infty )} \\
& \quad \leq C_{N}^{(2)}\kappa(S)^{7/2}\exp
\left(-\frac{\Tr (S)}{N} t\right) \left\|u -\left(\int_{\R^N} u
\psi_\infty dx\right)\right\|_{L^{2}(\psi_\infty )}
\end{aligned}
\end{equation}
holds for all $ u\in L^{2}(\R^{N},\psi_\infty dx ; \bC)$ and all $t\geq 0$, 
where $\kappa (\cdot)$ again denotes the condition number.
\end{theorem}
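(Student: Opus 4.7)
The plan is to establish the three assertions of the theorem in turn: (i) generation of a contraction semigroup with compact resolvent, (ii) the spectral identity \eqref{e:minmaxop}, and (iii) the quantitative bound \eqref{eq:CV_KB}.

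\textbf{Steps (i) and (ii).} I would take as invariant core the dense subspace $\mathcal{D}\subset L^{2}(\R^{N},\psi_\infty dx;\bC)$ of polynomials. Using the divergence-free identity $\nabla\cdot(JSx\,\psi_\infty)=0$ from Lemma~\ref{lem:nonrev_quad}, an integration by parts shows that on $\mathcal{D}$,
\[
\mathrm{Re}\,\langle \mathcal{L}_J u,u\rangle_{L^{2}(\psi_\infty dx)} = -\int_{\R^N}|\nabla u|^2\,\psi_\infty\,dx\le 0,
\]
so $\mathcal{L}_J$ is dissipative; combined with the range condition obtained from the triangular action of $\mathcal{L}_J$ on polynomials graded by degree (the drift preserves degree, the Laplacian lowers it by two), the Lumer--Phillips theorem yields after closure the contraction $C_0$-semigroup. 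Compactness of the resolvent is a standard consequence of the quadratic confinement in $\psi_\infty$. The spectrum of $\mathcal{L}_J$ is then given by Proposition~\ref{prop:spectLJ} as the set of negatives of nonnegative integer combinations of eigenvalues of $B_J=(I+J)S$, so \eqref{e:minmaxop} reduces to the matrix identity \eqref{e:minmax} of Theorem~\ref{thm:lin_alg}, attained at the same $J_{opt}$.

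\textbf{Step (iii): block decomposition via Wick calculus.} Using the Wick calculus of Appendix~\ref{sec:wickcalculus}, I would introduce creation and annihilation operators adapted to $\psi_\infty dx$ and rewrite $\mathcal{L}_{J_{opt}}$ as a Wick-quadratic operator. The resulting structural fact is a Fock-type orthogonal decomposition
\[
L^{2}(\R^{N},\psi_\infty dx;\bC)=\bigoplus_{k\ge 0}\mathcal{H}_k,
\]
where $\mathcal{H}_k$ is the space of Hermite polynomials of total degree $k$ adapted to $\psi_\infty$. Each $\mathcal{H}_k$ is invariant under $\mathcal{L}_{J_{opt}}$, and after conjugation by an $S$-dependent change of variables whose norm and inverse are each bounded by a fixed power of $\kappa(S)^{1/2}$, the restriction $\mathcal{L}_{J_{opt}}|_{\mathcal{H}_k}$ identifies with the $k$-th symmetric tensor differential of $-B_{J_{opt}}$ on $(\bC^N)^{\otimes_s k}$. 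Applying the single-particle ODE estimate \eqref{eq:lin_alg} and tensorising yields, for every $k\ge 1$,
\[
\|e^{t\mathcal{L}_{J_{opt}}}|_{\mathcal{H}_k}\|\le \kappa(S)^{\alpha}\bigl(C_N^{(1)}\kappa(S)^{1/2}\bigr)^{k}\exp\!\left(-\frac{k\,\Tr(S)}{N}\,t\right),
\]
for a small integer $\alpha$ coming from the change of variables.

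\textbf{From block bounds to \eqref{eq:CV_KB}.} Since $\bigoplus_{k\ge 1}\mathcal{H}_k$ is exactly the orthogonal complement of the constants in $L^{2}(\psi_\infty dx)$ and on each $\mathcal{H}_k$ the exponential rate is at least $k\,\Tr(S)/N\ge \Tr(S)/N$, a two-regime interpolation gives the uniform bound. For $t$ larger than a threshold $t_0=O(\log\kappa(S))$, the additional decay $\exp(-(k-1)\Tr(S)t/N)$ on $\mathcal{H}_k$ absorbs the $\kappa(S)^{(k-1)/2}$ prefactor uniformly in $k$, leaving a global bound with the single rate $\Tr(S)/N$ and a prefactor that is a power of $\kappa(S)$; for $0\le t\le t_0$, the contraction bound $\|e^{t\mathcal{L}_{J_{opt}}}\|\le 1$ from step~(i) combined with the matching factor $e^{\Tr(S)t_0/N}$ contributes another power of $\kappa(S)$. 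Adding these contributions to $\alpha$ gives the total exponent $7/2$ of $\kappa(S)$ claimed in \eqref{eq:CV_KB}. The main obstacle is precisely this bookkeeping: ensuring that the final exponential rate is exactly $\Tr(S)/N$ and not worse, and that all constants are uniformly controlled in $k$. Wick ordering is essential here because it is the only way the restriction $\mathcal{L}_{J_{opt}}|_{\mathcal{H}_k}$ decouples as a symmetric tensor power of the same single-particle operator $-B_{J_{opt}}$, so that \eqref{eq:lin_alg} applies block-by-block with constants not growing in $k$.
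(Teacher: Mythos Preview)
Your approach to part~(iii) via the Fock/second-quantization structure is correct and genuinely different from the paper's. The paper (Proposition~\ref{prop:return1}) does not tensorise the ODE bound: instead it introduces the auxiliary operator $C_Q=a^{*,T}Qa$ and the norm $\|u\|_{\dot{\mathcal H}^1_Q}^2=\langle u,C_Q u\rangle$, shows via the algebraic relation~\eqref{e:J1} and the Wick inequality of Lemma~\ref{lem:example} that $e^{t\tilde{\mathcal L}_J}$ contracts in $\dot{\mathcal H}^1_Q$ at the exact rate $\Tr(S)/N$ (Proposition~\ref{prop:decHQ}), and then transfers this back to $L^2$ by sandwiching $C_Q$ between multiples of $a^{*,T}Sa$ and invoking the sectorial contour estimate of Lemma~\ref{lem:sector} to control $\|t\tilde{\mathcal L}_Je^{t\tilde{\mathcal L}_J}\|$. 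It is this last transfer that produces the factor $\kappa(S)^{7/2}$ in~\eqref{eq:CN_PDE}.

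Your route is in fact cleaner and sharper than your writeup suggests, and the bookkeeping you worry about is unnecessary. The passage from $\mathcal L_J$ on $L^2(\psi_\infty\,dx)$ to $\tilde{\mathcal L}_J$ on $L^2(dx)$ in Section~\ref{sec:resc} is \emph{unitary}, so no power of $\kappa(S)$ is lost there. After that, the Fock identification $\mathcal D_k\cong\bigvee^k\bC^N$ is also unitary and $-\tilde{\mathcal L}_J=d\Gamma(S-\tilde J)$, giving the \emph{equality}
\[
\bigl\|e^{t\tilde{\mathcal L}_J}\big|_{\mathcal D_k}\bigr\|_{\mathcal L(L^2)}
=\|e^{-t(S-\tilde J)}\|^{k}=\|e^{-t\tilde B_J}\|^{k}.
\]
Since the symmetric part of $\tilde B_J=S+\tilde J$ is $S>0$, one has $\|e^{-t\tilde B_J}\|\le 1$ for every $t\ge 0$; hence the supremum over $k\ge 1$ is always attained at $k=1$ and the two-regime interpolation is superfluous. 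Citing the ODE estimate~\eqref{e:convode1} (rather than~\eqref{eq:lin_alg}, which carries an extraneous $\kappa(S)^{1/2}$) then yields directly
\[
\|e^{t\tilde{\mathcal L}_J}(I-\Pi_0)\|_{\mathcal L(L^2)}=\|e^{-t\tilde B_J}\|\le \kappa(Q)^{1/2}\,e^{-\frac{\Tr(S)}{N}t},
\]
which already proves~\eqref{eq:CV_KB} with prefactor $\kappa(Q)^{1/2}$ and \emph{no} $\kappa(S)$ factor at all---strictly better than the paper's bound. Your attempt to reverse-engineer the exponent $7/2$ is therefore misplaced: the $\kappa(S)^{7/2}$ in the statement is an artefact of the paper's method, not an intrinsic feature.
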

Theorem~\ref{th:CV_KB} is a straightforward consequence of
Proposition~\ref{prop:return1} below, with an explicit expression
for the constant $C_N^{(2)}$ given by~\eqref{eq:CN_PDE}. 
Again, the dependence of $C_N^{(2)}$ on the dimension $N$ is discussed in Remark~\ref{rem:CN_PDE}. A simple corollary of this result is the following:
\begin{corollary}\label{cor:CV_FP}
Let us consider the Fokker Planck equation associated to the dynamics~\eqref{e:nonrev_lin} on $X^J_t$:
\begin{equation}\label{eq:FP_lin}
\partial_t \psi^J_t = \nabla \cdot \left(B_J x \, \psi^J_t + \nabla \psi^J_t\right),
\end{equation}
where  $B_{J} = (I + J) S$. Let us assume that $\psi^J_0 \in
L^2(\R^N,\psi_\infty^{-1} \, dx)$. Then, by considering $J=-J_{opt}$,
where $J_{opt}\in \cA_N(\R)$ refers to the matrix considered in
Theorem~\ref{th:CV_KB} to get~\eqref{eq:CV_KB}.
Then the inequality 
$$
\left\|\psi^J_t - \psi_\infty \right\|_{L^{2}(\psi_\infty^{-1})} \leq C_{N}^{(2)}\kappa(S)^{7/2}\exp
\left(-\frac{\Tr (S)}{N} t\right) \left\|\psi^J_0 - \psi_\infty \right\|_{L^{2}(\psi_\infty^{-1})} \,,
$$
holds for all $t \ge 0$\,,
when $\psi_\infty$ is defined by~\eqref{eq:psi_infty_lin}.
\end{corollary}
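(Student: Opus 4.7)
The plan is to reduce the corollary to Theorem~\ref{th:CV_KB} by the standard change of unknown $\phi^J_t := \psi^J_t/\psi_\infty$. Two observations will do most of the work. First, the isometry $\|\psi^J_t - \psi_\infty\|_{L^2(\psi_\infty^{-1})} = \|\phi^J_t - 1\|_{L^2(\psi_\infty)}$ transfers the norm on the Fokker--Planck side into the norm appearing in~\eqref{eq:CV_KB}. Second, conservation of mass gives $\int_{\R^N}\phi^J_t\,\psi_\infty\,dx=\int_{\R^N}\psi^J_t\,dx=1$, so that the constant $1$ plays the role of $\int u\,\psi_\infty\,dx$ in~\eqref{eq:CV_KB}.

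The crucial step is to identify the PDE satisfied by $\phi^J_t$. Substituting $\psi^J_t = \phi^J_t \psi_\infty$ into~\eqref{eq:FP_lin}, using $\nabla\psi_\infty=-Sx\,\psi_\infty$, the divergence-free identity $\Tr(JS)=0$ from Lemma~\ref{lem:nonrev_quad}, and the antisymmetry observation $x^T SJSx=0$, I expect a short direct computation to yield
$$
\partial_t \phi^J_t = -(I-J)Sx\cdot\nabla\phi^J_t + \Delta \phi^J_t = \mathcal{L}_{-J}\phi^J_t\,.
$$
The sign flip $J\mapsto -J$ is the price of passing from the density side to the generator side: it reflects time-reversal of the non-reversible part of the dynamics, and is precisely why the corollary is stated with $J=-J_{opt}$ rather than $J_{opt}$.

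With this choice $J=-J_{opt}$, we have $\mathcal{L}_{-J}=\mathcal{L}_{J_{opt}}$, and $-J_{opt}\in\cA_N(\R)$, so all hypotheses of Theorem~\ref{th:CV_KB} are met. Applying~\eqref{eq:CV_KB} to $u=\phi^J_0$, whose mean against $\psi_\infty$ is $1$, yields
$$
\|\phi^J_t - 1\|_{L^2(\psi_\infty)} \leq C_N^{(2)}\kappa(S)^{7/2}\exp\!\left(-\frac{\Tr(S)}{N}t\right)\|\phi^J_0 - 1\|_{L^2(\psi_\infty)}\,,
$$
and the isometry converts this into the claim. The only mild technical point is to check that $\psi^J_0\in L^2(\R^N,\psi_\infty^{-1}dx)$ is equivalent to $\phi^J_0\in L^2(\R^N,\psi_\infty\,dx)$, which is immediate. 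The main obstacle here is really just the bookkeeping to get the sign conventions right in the derivation of the equation for $\phi^J_t$, since the quantitative content of the estimate has already been established in Theorem~\ref{th:CV_KB}.
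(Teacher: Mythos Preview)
Your proposal is correct and follows exactly the same route as the paper: set $\phi^J_t=\psi^J_t\psi_\infty^{-1}$, observe that it solves $\partial_t\phi^J_t=\mathcal{L}_{-J}\phi^J_t$ (the sign flip on $J$), use the isometry $\|\psi^J_t-\psi_\infty\|_{L^2(\psi_\infty^{-1})}=\|\phi^J_t-\int\phi^J_0\psi_\infty\,dx\|_{L^2(\psi_\infty)}$, and invoke~\eqref{eq:CV_KB}. Your write-up is in fact more detailed than the paper's, which records only the conjugation identity and the norm equality without spelling out the intermediate computation.
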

\begin{proof}
This result is based on the following simple remark: $\psi^J_t$ is a solution to~\eqref{eq:FP_lin} in $L^2(\R^N,\psi_\infty^{-1} \, dx)$ if and only if $\psi^J_t \, \psi_\infty^{-1} = e^{t {\mathcal L}_{-J}} ( \psi^J_0 \, \psi_\infty^{-1})$ in $L^2(\R^N,\psi_\infty \, dx)$. Notice the minus sign in ${\mathcal L}_{-J}$. Then the exponential convergence is obtained from~\eqref{eq:CV_KB} using the equality:
$$
\left\|\psi^J_t - \psi_\infty
\right\|_{L^{2}(\psi_\infty^{-1})}=\left\|\psi^J_t \,
  \psi_\infty^{-1} - \left( \int_{\R^N} \psi^J_0 \psi_\infty^{-1}
    \psi_\infty dx\right) \right\|_{L^{2}(\psi_\infty )}\,.
$$
\end{proof}
\begin{remark}
\label{rem:moregen} A more general result but with a less accurate
upper bound is given in Proposition~\ref{pr.gauss}.
\end{remark}
\begin{remark}
The partial differential equation
$$\partial_t f = {\mathcal L}_J f = - (B_J x) \cdot \nabla f + \Delta f$$
which we consider in Theorem~\ref{th:CV_KB} is sometimes called the backward Kolmogorov equation associated with the dynamics~\eqref{e:nonrev_lin}. It is related to this stochastic differential equation through the Feynman-Kac formula:
$$f(t,x)=\E^x f(X_t^J)$$
where $X^J_t$ is the solution to~\eqref{e:nonrev_lin} and $\E^x$ indicates that we consider a solution starting from $x \in \R^N$: $X^J_0=x$. The partial differential equation
$$\partial_t \psi^J_t = \nabla \cdot ( B_J x \, \psi^J_t + \nabla \psi^J_t)$$
which we consider in the Corollary~\ref{cor:CV_FP} is the Fokker
Planck (or forward Kolmogorov) equation associated
with~\eqref{e:nonrev_lin}: if $X_0^J \sim \psi^J_0(x) \, dx$, then  for all times $t > 0$, $\psi^J_t$ is the probability density function of $X^J_t$.

As explained in the proof of Corollary~\ref{cor:CV_FP} above, these two partial
differential equations are related through a conjugation. See also,
e.g.~\cite{villani-09,Oks98}. 

\end{remark}

\begin{remark}
It would be interesting to explore extensions of this approach to the Langevin
dynamics:
$$
\left\{
\begin{aligned}
dq_t&=p_t \, dt,\\
dp_t&= - \nabla V (q_t) \, dt - \gamma p_t \, dt + \sqrt{2 \gamma} \, dW_t,
\end{aligned}
\right.
$$
which is ergodic with respect to the measure $Z^{-1} \exp(-V(q)
-|p|^2/2) \, dp dq$. For example the following modification
$$
\left\{
\begin{aligned}
dq_t&= (I - J) p_t \, dt,\\
dp_t&= - ( I + J) \nabla V (q_t) \, dt - \gamma p_t \, dt + \sqrt{2 \gamma} \, dW_t,
\end{aligned}
\right.
$$
where $J$ is an antisymmetric matrix leaves the measure $Z^{-1} \exp(-V(q)
-|p|^2/2) \, dp dq$ stationary. In the linear case $V(x)=\frac{x^T S x}{2}$, this leads to a Kramers-Fokker-Planck operator which is a
differential operator  (at most) quadratic in
$(q,p,\partial_{q}, \partial_{p})$\,.
Then the exponential decay rate can be reduced to some (more involved)
linear algebra problem following \cite{HiPr}. About the constant
prefactor in front of the decaying in time exponential, the argument
based on
sectiorality used in Lemma~\ref{lem:sector} does not apply anymore. It
has to be replaced by hypoelliptic estimates in the spirit of \cite{EcHa,HeNi04,HiPr}.
The reference \cite{HiPr} provides accurate results for differential
operators with at most quadratic symbols. 
\end{remark}

\begin{remark}
We notice that the fundamental property  ${\rm div}(b e^{-V})=0$ is
still satisfied for $b(t,x)=J(t)\nabla V(x)$, where $J(t)$ is a
time-dependent (deterministic) antisymmetric matrix. This could be useful for further
generalization of this approach.
\end{remark}

%
%



\section{A useful rescaling}
\label{sec:resc}

The analysis will be carried out in a suitable system of coordinates
which simplifies the calculations and the presentation of the
intermediate results.  We will
perform one conjugation and a change of variables.

First, from the partial differential equation point of view, it
appears to be useful to work in $L^{2}(\R^{N}, dx;\bC)$ instead of
$L^{2}(\R^{N}, \psi_\infty dx;\bC)$, since this allows to use standard
techniques for the spectral analysis of partial differential
equations. In the following, the norm in  $L^{2}(\R^{N}, dx;\bC)$ is
simply denoted $\| \cdot \|_{L^2}$\,.
For a general potential $V$, the mapping $u\mapsto
\psi_\infty^{-1/2} u$ sends unitarily
$L^{2}(\R^{N}, dx;\bC)$ into $L^{2}(\R^{N}, \psi_\infty dx;\bC)$ with
the associated transformation rules for the differential operators:
\begin{eqnarray*} &&e^{-\frac{V}{2}}\nabla e^{\frac{V}{2}}=\nabla
+\frac{1}{2}\nabla V\quad,\quad e^{-\frac{V}{2}}\nabla^{T}
e^{\frac{V}{2}}=\nabla^{T} +\frac{1}{2}\nabla V^{T}\,,\\ && \nabla=
\begin{pmatrix}
  \partial_{x_{1}}\\ \vdots\\
\partial_{x_{N}}
\end{pmatrix} \quad,\quad
\nabla^{T}=(\partial_{x_{1}},\ldots, \partial_{x_{N}})\,,\quad
(\nabla^{T}X=\mbox{div\,} X)\,.
\end{eqnarray*}
Thus, the operator
$$
\mathcal{L}=-\nabla V^{T}\nabla + b^{T}\nabla + \Delta
$$ 
is transformed into
\begin{equation}\label{e:schr-b}
\overline{\mathcal{L}}=e^{-\frac{V}{2}}\mathcal{L}e^{\frac{V}{2}} = 
\Delta -\frac{1}{4}|\nabla V|^{2}+\frac{1}{2}\Delta V
+b^{T}\nabla + \frac{1}{2}b^{T}\nabla V\,.
\end{equation}
In the linear case we consider in this paper, $V(x)=\frac{1}{2}x^{T}Sx$
(where $S=S^{T}$ is positive definite), $b(x)=-Ax$ and $A=JS$, $J\in \mathcal{A}_{N}(\R)$,
(see Lemma~\ref{lem:nonrev_quad}), so that the operator
$$
\mathcal{L}=\mathcal{L}_{J}=-(B_{J}x)^{T}\nabla+\Delta\quad\text{with}\quad B_{J}=(I+J)S\,,
$$
becomes
\begin{align*} 
\overline{\mathcal{L}_{J}}
&=
\Delta-\frac{1}{4}x^{T}S^{2}x + \frac{1}{2}\Tr(S)
-x^{T}A^{T}\nabla -\frac{1}{2}x^{T}A^{T}Sx\\ 
&= \Delta-\frac{1}{4}x^{T}S^{2}x + \frac{1}{2}\Tr(S)
-x^{T}SJ^{T}\nabla -\frac{1}{2}x^{T}SJ^{T}Sx
\\
 &=
\Delta-\frac{1}{4}x^{T}S^{2}x + \frac{1}{2}\Tr(S)
+\frac{1}{2}(x^{T}SJ\nabla -\nabla^{T}JSx)\,.
\end{align*}
For the last line we have used
\begin{eqnarray*}
  &&J^{T}=-J\quad,\quad x^{T}SJ^{T}Sx=0\,,\\
&&
\nabla^{T}Bx=\sum_{i,j}\partial_{x_{i}}B_{ij}x_{j}=\sum_{i,j}x_{j}B_{ij}\partial_{x_{i}}+\sum_{i}B_{ii}=
x^{T}B^{T}\nabla +\Tr(B)\,,\\
\text{with}&& B=SJ^{T}\quad,\quad B^{T}=-JS \quad\text{and}\quad \Tr(SJ)=\Tr\left(S^{1/2}JS^{1/2}\right)=0\,.
\end{eqnarray*}
According to Lemma~\ref{lem:nonrev_quad}, we know that the kernel of
$\overline{\mathcal{L}_{J}}$ is $\bC
e^{-\frac{V}{2}}=\bC e^{-\frac{x^{T}Sx}{4}}$. The operator $\overline{\mathcal{L}}_{J}$ is unitarily equivalent to the operator
 $\mathcal{L}_{J}$.

The aim of the second change of variables is to modify the kernel of the
operator to a centered Gaussian with covariance matrix being the identity. Let us introduce the new coordinates
$$
x=S^{-1/2}y\quad,\quad \nabla_{x}= S^{1/2}\nabla_{y}\,.
$$
Then the operator $\overline{\mathcal{L}_{J}}$ becomes:
\begin{equation} \label{eq:deftLJ} 
\tilde{\mathcal{L}_{J}}=
\nabla_{y}^{T}S\nabla_{y}-\frac{1}{4}y^{T}Sy+\frac{1}{2}\Tr(S)
+\frac{1}{2}(y^T \tilde{J} \nabla_{y}
-\nabla^{T} \tilde{J} y)
\end{equation}
where
$$
\tilde{J}=S^{1/2}JS^{1/2} \in \mathcal{A}_{N}(\R)\,.
$$ 
The corresponding stochastic
process is, in the new coordinate system ($Y_t=S^{1/2} X_t$):
$$
dY_t=-(S+\tilde{J})Y_t dt +\sqrt{2} \, S^{1/2}dW_{t}\,.
$$
The $L^{2}$-normalized element of $\ker \tilde{\mathcal{L}_{J}}$ is now
simply the
standard Gaussian distribution
$$
\frac{1}{(2\pi)^{N/4}}e^{-\frac{|y|^{2}}{4}}\,.
$$
Notice that
$\tilde{\mathcal{L}_{J}}$ is still acting in $L^{2}(\R^{N}, dx;\bC)$\,.

As a summary, $u(t,x)$ satisfies
$$\partial_t u = {\mathcal L}_J u$$
if and only if $v(t,y)=\sqrt{\psi_\infty}(S^{-1/2} y) \,u(t,S^{-1/2} y)$ satisfies
$$
\partial_t v = \tilde{\mathcal L}_J v\,.
$$
We have $u(t,x)=e^{t {\mathcal L}_J } u_0(x)$ and $v(t,y)=e^{t \tilde
  {\mathcal L}_J } v_0(y)$ where $u_0=u(0,\cdot)$ and $v_0=v(0,\cdot)$
are related through $v_0(y)=\sqrt{\psi_\infty}(S^{-1/2}
y) \,u_0(S^{-1/2} y)$. In particular, it is easy to check that for all $t \ge 0$\,,
\begin{equation}\label{eq:uv}
\left\|e^{t\mathcal{L}_{J}} u_0 -\left(\int_{\R^N} u_0
\psi_\infty dx\right)\right\|_{L^{2}(\psi_\infty )} =(\det S)^{-1/4} \left\|
e^{t \tilde{\mathcal{L}}_{J}} (I - \Pi_0) v_0 \right\|_{L^2},
\end{equation}
where 
$$
\displaystyle (\Pi_0(v_0)) (y)=(2 \pi)^{-N/2} \left( \int_{\R^N} v_0 (y) e^{-|y|^2/4} \,
dy \right) e^{-|y|^2/4}
$$
is the $L^2$-orthogonal projection of $v_0$ on the kernel
$\bC e^{-\frac{|y|^{2}}{4}}$ of $\tilde{\mathcal L}_J$. Thus,
proving~\eqref{eq:CV_KB} is equivalent to proving
\begin{equation}\label{eq:CV_KB'}
\left\|e^{t\tilde{\mathcal{L}}_{J}}(I-\Pi_{0}) \right\|_{\mathcal{L}(L^2)} \le
C_{N}^{(2)}\kappa(S)^{7/2}\exp
\left(-\frac{\Tr (S)}{N} t\right),
\end{equation}
where here and in the following we use the standard operator norm
$$\left\|A \right\|_{\mathcal{L}(L^2)}=\sup_{
  u \in {L^2(\R^N)}} \frac{\left\|A u\right\|_{L^2}}{\|u\|_{L^2}}$$
for an arbitrary operator $A$.

In the following, we will often work with $\tilde{\mathcal L}_J$ and
$Y_t$ rather than with ${\mathcal L}_J$ and $X_t$.

%
%
%
\section{The linear algebra problem}
\label{sec:linalg}

The stochastic differential equation~\eqref{e:nonreversible} for 
the linear case (quadratic potential) that we consider becomes
\begin{equation}\label{e:nonrev_quad} 
dX_{t} = -(I+J)S X_{t} \, dt +
\sqrt{2} \, dW_{t}\,,
\end{equation} 
and is associated with the drift matrix
\begin{equation}\label{e:drift_matrix} B_{J}:=(I +J)S\,.
\end{equation} 
With the change of variables given in Section~\ref{sec:resc} 
($Y_t=S^{1/2}X_t$), the stochastic differential equation~\eqref{e:nonrev_quad} becomes
$$
dY_t=-(S+\tilde J)Y_t \, dt +\sqrt{2}\,S^{1/2} \, dW_{t}.
$$
The drift matrix is now
\begin{equation}\label{e:drift_matrix_tilde}
 \tilde{B}_{J}=S^{1/2}B_{J}S^{-1/2} =(S+\tilde{J})\,,
\end{equation}
where, we recall, $\tilde{J}=S^{1/2} J S^{1/2} \in {\mathcal A}_N(\R)$\,.
We first collect basic spectral properties of $\tilde{B}_{J}$ (or equivalently of $B_{J}$) when $J\in\mathcal{A}_{N}(\R)$)
  and then show how
this spectrum can be constructively optimized.

\subsection{Spectrum of $\tilde{B}_{J}$ for a general $J \in \mathcal{A}_{N}(\R)$}
\label{sec:specBJ}

\begin{proposition}\label{prop:spectrumB} For  $\tilde{J}\in
  \mathcal{A}_{N}(\R)$ (or equivalently $J=S^{-1/2} \tilde{J} S^{-1/2}
  \in \mathcal{A}_{N}(\R)$) and $S \in \mathcal{S}_{N}^{>0}(\R)$\,, 
the matrix $\tilde{B}_{J}= S + \tilde{J}$
has the following properties:
\begin{enumerate}
\item[(i)] $\sigma (\tilde{B}_{J}) \subset \Big\{z \in \bC, \, {\rm Re}(z) >0\Big\}$\,.
\item[(ii)] $\Tr(\tilde{B}_{J}) = \Tr (S)$\,.
\item[(iii)] $\min {\rm Re} \big[ \sigma (\tilde{B}_{J}) \big] \leq
\frac{\Tr(S)}{N}$\,.
\end{enumerate}
\end{proposition}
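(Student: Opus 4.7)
The proposition consists of three independent statements about $\tilde B_J = S + \tilde J$ with $S\in\mathcal{S}_N^{>0}(\R)$ and $\tilde J\in\mathcal{A}_N(\R)$. Each is short, so the plan is to treat them in order. I do not expect any substantive obstacle; the content is elementary linear algebra, and the main job is to record the correct identities about $v^\ast S v$ and $v^\ast \tilde J v$ for complex eigenvectors $v$.

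For (i), the plan is to take any eigenvalue $\lambda\in\sigma(\tilde B_J)$ with a (possibly complex) unit eigenvector $v\in\bC^N$ and compute $v^\ast \tilde B_J v = \lambda$. I will split this into $v^\ast S v + v^\ast \tilde J v$. Since $S$ is real symmetric positive definite, $v^\ast S v$ is real and strictly positive (note $v\neq 0$). Since $\tilde J$ is real antisymmetric, $(v^\ast \tilde J v)^\ast = v^\ast \tilde J^T v = -v^\ast \tilde J v$, so $v^\ast \tilde J v$ is purely imaginary. Taking real parts gives $\mathrm{Re}(\lambda) = v^\ast S v > 0$, which proves (i) and in fact yields the sharper lower bound $\mathrm{Re}(\lambda)\geq \lambda_{\min}(S)$.

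For (ii), I just use linearity of the trace and the fact that any antisymmetric matrix has zero diagonal: $\Tr(\tilde B_J) = \Tr(S) + \Tr(\tilde J) = \Tr(S)$.

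For (iii), I combine (i) and (ii). The eigenvalues $\lambda_1,\dots,\lambda_N$ of the real matrix $\tilde B_J$ (counted with algebraic multiplicity) satisfy $\sum_{k=1}^N \lambda_k = \Tr(\tilde B_J) = \Tr(S)$. Since complex eigenvalues come in conjugate pairs for a real matrix, this sum is real, hence $\sum_{k=1}^N \mathrm{Re}(\lambda_k) = \Tr(S)$. Consequently
\begin{equation*}
N\cdot\min_{1\le k\le N}\mathrm{Re}(\lambda_k) \;\leq\; \sum_{k=1}^N \mathrm{Re}(\lambda_k) \;=\; \Tr(S),
\end{equation*}
which gives the desired inequality $\min\mathrm{Re}[\sigma(\tilde B_J)]\leq \Tr(S)/N$. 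The same argument of course applies to $B_J=(I+J)S$ directly, since $\tilde B_J$ and $B_J$ are conjugate.
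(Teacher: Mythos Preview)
Your proof is correct and follows essentially the same approach as the paper: the same eigenvector computation for (i), the same trace argument for (ii), and the same averaging-over-eigenvalues argument for (iii). The only cosmetic differences are that the paper writes $|S^{1/2}x_\lambda|^2/|x_\lambda|^2$ where you write $v^\ast S v$ for a unit $v$, and the paper phrases (iii) with distinct eigenvalues and multiplicities rather than repeating them.
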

Notice that the properties stated above on $\tilde{B}_{J}$ also hold
 on $B_J$ since $\sigma(\tilde{B}_{J})=\sigma({B}_{J})$ and $\Tr(\tilde{B}_{J})=\Tr({B}_{J})$\,.

\begin{proof} Let $\lambda \in \bC$ be an eigenvalue of $\tilde{B}_{J}$ with
corresponding (non-zero) eigenvector $x_{\lambda} \in \bC^N$:
$$
(S+\tilde{J})x_{\lambda}=\tilde{B}_{J} x_{\lambda} = \lambda x_{\lambda}\,.
$$
Since $S$ is a real matrix, the complex scalar product  with $x_{\lambda}$ gives
$$
\lambda |x_{\lambda}|^{2}=|S^{1/2} x_{\lambda}|^{2} + (
x_{\lambda}\,,\,\tilde{J}x_{\lambda} )_{\bC}\,.
$$
Here and in the following, the complex scalar product is taken to be right-linear and
left-antilinear: for any $X$ and $Y$ in $\bC^N$\,,
$$
\left(X,Y\right)_{\bC}=\overline{X}^{T}Y\,.
$$
Using the fact that $\tilde{J} \in \mathcal{A}_N(\R)$\,, we get:
$$
{\rm Re}(\lambda) = \frac{|S^{1/2}x_{\lambda}|^{2}}{|x_{\lambda}|^{2}} >0\,.
$$ 
This ends the proof of $(i)$. The proof of $(ii)$ follows immediately from the fact that the trace of
the antisymmetric  matrix $\tilde{J}$ is $0$\,.

To prove $(iii)$, let
$$
\sigma (\tilde{B}_{J}) = \left\{\lambda_{1}, \lambda_{2}, \ldots, \lambda_{r}
\right\}
$$
denote the spectrum of $\tilde{B}_{J}$\,, and let $m_{k}$ denote the algebraic multiplicity of
$\lambda_{k}$\,. Part $(ii)$ says
$$
\sum_{k=1}^{r} m_{k} \lambda_{k} = \Tr (S) \in \R\,,
$$
and consequently:
$$
\sum_{k=1}^{r} m_{k} \, {\rm Re}(\lambda_{k}) = \Tr (S)\,.
$$
Now, using the fact that $\sum_{k=1}^{r} m_{k} = N$\,, we conclude
$$
\min {\rm Re} \big[ \sigma (\tilde{B}_{J}) \big] = \min \left\{ {\rm Re}
(\lambda_k) , k \in \{1,\ldots,r\} \right\} \leq
\frac{\Tr(S)}{N}\,.
$$
\end{proof}

\subsection{Optimization of $\sigma(\tilde{B}_{J})$}

Our goal now is to maximize $\min {\rm Re} \big[ \sigma (B_{J})
\big]$ over $J \in {\mathcal A}_N(\R)$\,, or equivalently, to maximize
$\min {\rm Re} \big[ \sigma (\tilde{B}_{J})\big]$ over
$\tilde{J}=S^{1/2}JS^{1/2} \in {\mathcal A}_N(\R)$\,. 
 Indeed, this is the quantity which will determine the exponential
 rate of convergence to equilibrium of the non-reversible
 dynamics~\eqref{e:nonrev_lin} as it will become clear below. 

From Proposition~\ref{prop:spectrumB}$(iii)$,  the maximum
is obviously achieved if there exists a matrix $J\in {\mathcal A}_N(\R)$ such that:
\begin{equation}
\label{e:maximize} 
\forall \; \lambda \in
\sigma(\tilde{B}_{J}), \quad {\rm Re}(\lambda) = \frac{\Tr(S)}{N}\,.
\end{equation}
In the following proposition we obtain a characterization of
the antisymmetric matrices $\tilde{J}$ (related to $J$ through $\tilde{J}=S^{1/2}JS^{1/2}$) for which~\eqref{e:maximize} is
satisfied and $\tilde{B}_{J}$ is diagonalizable (see~\eqref{e:J1}
below). This characterization requires to
introduce a companion real symmetric positive definite matrix $Q\in
\mathcal{S}_{N}^{>0}(\R)$. The case of non-diagonalizable
$\tilde{B}_{J}$ is then discussed, using an asymptotic argument.
We finally show how this characterization can be used to develop an algorithm for constructing a matrix 
$ \tilde{J} \in \mathcal{A}_{N}(\R)$ such that~\eqref{e:maximize} is satisfied. 
\begin{proposition}\label{prop:maximize} Assume that $\tilde{J}\in
  \mathcal{A}_{N}(\R)$ and that $S\in \mathcal{S}_{N}^{>0}(\R)$\,. Then the
following conditions are equivalent:
\begin{enumerate}
\item[(i)] The matrix $\tilde{B}_{J}=S+\tilde{J}$ is diagonalizable (in $\bC$) and the spectrum of $\tilde{B}_{J}$ satisfies
\begin{equation}\label{e:BJ_spectrum} \sigma (\tilde{B}_{J}) \subset
\frac{\Tr(S)}{N} + i \R\,.
\end{equation}
\item[(ii)] $\displaystyle{\tilde{B}_{J} - \frac{\Tr(S)}{N} I}$ is similar to an anti-adjoint
matrix.
\item[(iii)] There exists a hermitian positive definite matrix $Q = \overline{Q}^{T}$
such that
\begin{equation}\label{e:J1C} \tilde{J}Q - Q\tilde{J} = -QS -SQ + \frac{2
\Tr(S)}{N} Q\,.
\end{equation}
\item[(iv)] There exists a real symmetric positive definite matrix $Q = Q^{T}$
such that
\begin{equation}\label{e:J1} \tilde{J}Q - Q\tilde{J} = -QS -SQ + \frac{2
\Tr(S)}{N} Q\,.
\end{equation}
\end{enumerate}
\end{proposition}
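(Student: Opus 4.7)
The plan is to establish the chain $(i)\Leftrightarrow(ii)\Leftrightarrow(iii)\Leftrightarrow(iv)$.

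For $(i)\Leftrightarrow(ii)$, I would use the classical fact that a matrix is anti-adjoint (skew-Hermitian) if and only if it is unitarily diagonalizable with purely imaginary spectrum. Since similarity preserves both diagonalizability and the spectrum, a matrix $M$ is similar to an anti-adjoint matrix iff $M$ is diagonalizable with purely imaginary spectrum. Applied to $M=\tilde{B}_{J}-\frac{\Tr(S)}{N}I$, this gives exactly $(i)\Leftrightarrow(ii)$.

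For $(ii)\Leftrightarrow(iii)$, set $M=\tilde{B}_{J}-\frac{\Tr(S)}{N}I=S+\tilde{J}-\frac{\Tr(S)}{N}I$. Since $M$ is real, $M^{*}=M^{T}=S-\tilde{J}-\frac{\Tr(S)}{N}I$. Assume $(ii)$: write $M=PAP^{-1}$ with $P$ invertible and $A^{*}=-A$. Taking adjoints and combining with the original relation yields $PP^{*}M^{*}(PP^{*})^{-1}=-M$. Setting $Q=(PP^{*})^{-1}$, which is Hermitian positive definite, this is equivalent to $QM^{T}+MQ=0$. Expanding $M$ and $M^{T}$ in this equation, the terms involving $\frac{\Tr(S)}{N}$ combine to give $-\frac{2\Tr(S)}{N}Q$, and the $S$-terms combine into $QS+SQ$, while the $\tilde{J}$-terms give $\tilde{J}Q-Q\tilde{J}$; rearranging yields \eqref{e:J1C}. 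Conversely, given a Hermitian positive definite $Q$ satisfying \eqref{e:J1C}, factor $Q^{-1}=PP^{*}$ using a square root, and check that $A=P^{-1}MP$ satisfies $A^{*}=-A$ by reversing the same algebraic manipulation.

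For $(iii)\Leftrightarrow(iv)$, the implication $(iv)\Rightarrow(iii)$ is immediate since a real symmetric positive definite matrix is Hermitian positive definite. For $(iii)\Rightarrow(iv)$, observe that the equation \eqref{e:J1C} is $\R$-linear in $Q$ with all coefficient matrices real. Write a Hermitian solution as $Q=Q_{R}+iQ_{I}$ where $Q_{R}=Q_{R}^{T}$ is real symmetric and $Q_{I}=-Q_{I}^{T}$ is real antisymmetric. Taking real parts in \eqref{e:J1C} shows that $Q_{R}$ satisfies the same equation. Positive definiteness follows by testing on a real vector $x$: $x^{T}Q_{R}x=x^{*}Qx>0$, where the cross terms with $Q_{I}$ vanish because $x$ has no imaginary part.

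The main obstacle is the bookkeeping in $(ii)\Leftrightarrow(iii)$: one has to correctly identify $Q$ as the inverse of $PP^{*}$ (rather than $PP^{*}$ itself) so that conjugating the relation $A^{*}=-A$ by $P$ produces the Lyapunov-type equation \eqref{e:J1C} with the right signs, and verify that the conversion is reversible (any Hermitian positive definite $Q$ admits such a factorization). The rest is essentially routine linear algebra.
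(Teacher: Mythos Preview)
Your overall strategy coincides with the paper's proof, but there is a genuine sign error in the step $(ii)\Rightarrow(iii)$, exactly at the point you yourself flagged as the main obstacle. From $M=PAP^{-1}$ with $A^*=-A$ you correctly derive $PP^{*}M^{*}(PP^{*})^{-1}=-M$. Setting $Q=(PP^{*})^{-1}$ as you do, this reads $Q^{-1}M^{T}Q=-M$, hence $M^{T}Q=-QM$, i.e.\ $QM+M^{T}Q=0$ --- \emph{not} $QM^{T}+MQ=0$. Expanding $QM+M^{T}Q=0$ with $M=S+\tilde J-\frac{\Tr(S)}{N}I$ gives
\[
\tilde{J}Q-Q\tilde{J}=QS+SQ-\frac{2\Tr(S)}{N}Q,
\]
which is \eqref{e:J1C} with the wrong sign on the right-hand side. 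The correct choice (and the one the paper makes) is $Q=PP^{*}$: then $QM^{*}Q^{-1}=-M$ yields $MQ+QM^{T}=0$, which does expand to \eqref{e:J1C}. Equivalently, your $Q=(PP^{*})^{-1}$ is the positive definite matrix associated with $-\tilde J$ rather than with $\tilde J$ (cf.\ Remark~\ref{rem:signJ}). The same correction applies to your converse $(iii)\Rightarrow(ii)$: factor $Q=PP^{*}$, not $Q^{-1}=PP^{*}$.

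Apart from this slip, the argument is essentially the paper's. Your $(iii)\Rightarrow(iv)$ is in fact slightly more complete: you verify that the real part $Q_{R}$ of a Hermitian positive definite $Q$ is itself positive definite via $x^{T}Q_{R}x=x^{*}Qx>0$ for real $x\neq 0$, a point the paper leaves implicit.
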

\begin{proof} First we prove the equivalence between $(i)$ and
$(ii)$. Equation~\eqref{e:BJ_spectrum} is equivalent to the statement that
there exists a matrix $P \in GL_{n}(\bC)$ (where $GL_{n}(\bC)$ denotes
the set of complex valued invertible matrices) such that
$$
P^{-1} \left( \tilde{B}_{J} - \frac{\Tr(S)}{N} I \right) P =
\mbox{diag}(i t_{1}, \dots i t_{N})
$$
for some $t_k$ in $\R$\,,
which is equivalent to statement $(ii)$, since any anti-adjoint matrix can be diagonalized in $\bC$\,.

To prove that $(ii)$ implies $(iii)$, we write statement $(ii)$ as: there
exists a matrix $P \in GL_{n}(\bC)$ such that
\begin{equation}\label{eq:1}
\left(\overline{P^{-1} \tilde{B}_{J} P}\right)^{T} - \frac{\Tr(S)}{N} I = - P^{-1}
\tilde{B}_{J} P + \frac{\Tr(S)}{N} I\,.
\end{equation}
Since $\tilde{B}_{J}=S+\tilde{J}\in \mathcal{M}_{N}(\R)$ and $\tilde{J} \in {\mathcal A}_N(\R)$\,, we obtain
$$
P^{-1} \tilde{J} P -  \overline{P}^{T}\tilde{J} \left( \overline{P}^{-1} \right)^{T} =
-\overline{P}^{T} S \left( \overline{P}^{-1} \right)^{T} -P^{-1} S P + \frac{2
\Tr(S)}{N} I\,.
$$
We multiply this equation left and right by $P$ and $\overline{P}^{T}$
respectively, to obtain
\begin{equation}\label{eq:2}
\tilde{J} P \overline{P}^T - P \overline{P}^T \tilde{J} = - P \overline{P}^T S - S P
\overline{P}^T + \frac{2 \Tr(S)}{N} P \overline{P}^T\,.
\end{equation}
Statement $(iii)$ follows now by taking $Q = P
\overline{P}^T$\,. Conversely,  $(iii) \Rightarrow (ii)$
follows from the writing  $Q=P
\overline{P}^T$\,, with $P \in GL_{n}(\bC)$ (take $P=\sqrt{Q}$)
for any hermitian positive definite matrix
$Q$\,. Then, one obtains $(ii)$ by going back
from~\eqref{eq:2} to~\eqref{eq:1}. 
Finally, $(iii)$ implies $(iv)$ by taking the real part of~\eqref{e:J1C}
and using the fact that $\tilde{J}$ and $S$ are real matrices. The
converse $(iv) \Rightarrow (iii)$ is obvious.
 This ends the proof.
\end{proof}

\begin{remark}\label{rem:signJ}
Notice that if $\tilde{J}$ is such
that~\eqref{e:BJ_spectrum} is satisfied, so is $-\tilde{J}$ (and thus $\tilde{J}^T$). Indeed,
if $(\tilde{J},Q)$ satisfies~\eqref{e:J1}, then $(-\tilde{J},Q^{-1})$
also satisfies~\eqref{e:J1}.
\end{remark}

Let us give another equivalent formulation of~\eqref{e:J1}.
\begin{lemma} 
With the notation of Proposition~\ref{prop:maximize}, let us consider
matrices $\tilde J \in {\mathcal A}_N(\R)$, $S \in {\mathcal
  S}^{>0}_N(\R)$ and $Q \in {\mathcal S}^{>0}_N(\R)$. Let us denote $\{\lambda_{k}
\}_{k=1}^{N}$ the positive real eigenvalues of $Q$ (counted with multiplicity), and $\{\psi_{k} \}_{k=1}^{N}$ the associated eigenvectors, which form an
orthonormal basis of $\R^{N}$. Equation~\eqref{e:J1} is equivalent to the two conditions: for all $k$ in $\{1, \dots, N \}$,
\begin{equation}\label{e:cond_nec} 
\left( \psi_{k} , S \psi_{k}\right)_{\R} = \frac{\Tr(S)}{N}
\end{equation}
and, for all $j \neq k$ in $\{1, \dots, N \}$,
\begin{equation}
\label{e:cond_J} 
(\lambda_{j} - \lambda_{k}) ( \psi_{j},\tilde J \psi_{k})_{\R}= (\lambda_{k} +
\lambda_{j}) ( \psi_{j} , S \psi_{k}
)_{\R}\,.
\end{equation}
\end{lemma}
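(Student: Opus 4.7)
The plan is to express \eqref{e:J1} componentwise in the orthonormal eigenbasis $\{\psi_{k}\}_{k=1}^{N}$ of $Q$, and show that the resulting scalar identities split exactly into the diagonal conditions \eqref{e:cond_nec} and the off-diagonal conditions \eqref{e:cond_J}.

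First I would take the real scalar product of both sides of \eqref{e:J1} with $\psi_{j}$ on the left and $\psi_{k}$ on the right. Using $Q\psi_{k}=\lambda_{k}\psi_{k}$ and the symmetry of $Q$, the left-hand side becomes
\begin{equation*}
(\psi_{j},\tilde{J}Q\psi_{k})_{\R}-(\psi_{j},Q\tilde{J}\psi_{k})_{\R}
=\lambda_{k}(\psi_{j},\tilde{J}\psi_{k})_{\R}-\lambda_{j}(\psi_{j},\tilde{J}\psi_{k})_{\R}
=(\lambda_{k}-\lambda_{j})(\psi_{j},\tilde{J}\psi_{k})_{\R},
\end{equation*}
while the right-hand side becomes
\begin{equation*}
-(\lambda_{j}+\lambda_{k})(\psi_{j},S\psi_{k})_{\R}+\frac{2\,\Tr(S)}{N}\lambda_{k}\delta_{jk}.
\end{equation*}

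Next I would treat the diagonal and off-diagonal cases separately. For $j=k$, the left-hand side vanishes and, since $\lambda_{k}>0$, one recovers exactly \eqref{e:cond_nec}. For $j\neq k$, the $\delta_{jk}$ term disappears and, after multiplying by $-1$, one recovers \eqref{e:cond_J}. Thus \eqref{e:J1} implies \eqref{e:cond_nec} and \eqref{e:cond_J}.

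For the converse, I would observe that since $\{\psi_{k}\}$ is an orthonormal basis, a matrix identity in $\mathcal{M}_{N}(\R)$ is equivalent to the collection of its scalar entries $(\psi_{j},\cdot\,\psi_{k})_{\R}$ for $j,k\in\{1,\dots,N\}$. Assuming \eqref{e:cond_nec} and \eqref{e:cond_J}, the computation above shows that each of these scalar entries of the left and right-hand sides of \eqref{e:J1} coincides, which gives \eqref{e:J1}. No obstacle is expected here; the argument is a direct diagonalization computation, the only care needed being the correct handling of the $Q$-weighting on both sides (using $Q$ symmetric so that $\lambda_{j}$ comes out on the $\psi_{j}$-side) and the correct book-keeping of signs when identifying the derived identity with the form \eqref{e:cond_J}.
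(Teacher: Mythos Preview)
Your proposal is correct and follows essentially the same approach as the paper: test \eqref{e:J1} against $\psi_{j}$ and $\psi_{k}$ using $Q\psi_{k}=\lambda_{k}\psi_{k}$, then split into the diagonal case (yielding \eqref{e:cond_nec} since $\lambda_{k}>0$) and the off-diagonal case (yielding \eqref{e:cond_J}), with the converse following because an orthonormal basis determines a matrix identity entrywise. Your write-up is in fact slightly more explicit than the paper's about the converse direction, but the argument is the same.
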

\begin{proof} 
Since  $\{\psi_{k} \}_{k=1}^{N}$ form an
orthonormal basis of $\R^{N}$, Equation~\eqref{e:J1} is equivalent to this same equation tested against $\psi_j^T$ on the left, and $\psi_k$ on the right. This yields:
$$
\lambda_k \psi_j^T \tilde{J} \psi_k - \lambda_j \psi_j^T \tilde{J} \psi_k= - \lambda_j \psi_j^T S \psi_k - \lambda_k \psi_j^T S \psi_k + \frac{2
\Tr(S)}{N} \delta_{jk} \lambda_k\,,
$$
where $\delta_{jk}$ is the Kronecker symbol. When $j=k$\,, we obtain~\eqref{e:cond_nec} by using the
antisymmetry of $\tilde J$\,, together with the fact that all
eigenvalues of $Q$ are non-zero. When $j \neq k$\,, we obtain~\eqref{e:cond_J}.
\end{proof}
Notice that when the eigenvalues of $Q$ are all with multiplicity one, $\tilde J$ is completely determined by~\eqref{e:cond_J}: for all $j \neq k$ in $\{1,\ldots,N\}$\,,
\begin{equation}
\label{e:Jsoln} 
( \psi_{j}\,,\,\tilde J \psi_{k})_{\R}= - \frac{\lambda_{k} +
\lambda_{j}}{\lambda_{k} - \lambda_{j}} ( \psi_{j} , S \psi_{k}
)_{\R}\,.
\end{equation}
Indeed, by the antisymmetry of $\tilde J$\,, the remaining entries are
zero: 
$$
( \psi_{j}\,,\,\tilde J \psi_{j})_{\R}=0\quad\text{for~all}~j \in
\{1,\ldots,N\}\,.
$$
This motivates the following definition.
\begin{definition}
\label{def:Popt}
We will denote by
  $\mathcal{P}_{opt}(S)$  the set of pairs $(\tilde{J},Q)$, where $Q$ is a real symmetric
  positive definite
  matrix with  $N$ eigenvalues of multiplicity one and associated eigenvectors
  satisfying~\eqref{e:cond_nec}, and $\tilde{J}$ is the associated
  antisymmetric matrix defined by~\eqref{e:Jsoln}.
\end{definition}
Notice that for any $(\tilde{J},Q) \in \mathcal{P}_{opt}(S)$\,,
$\tilde{J}$ is completely defined (by~\eqref{e:Jsoln}) as soon as $Q$
is chosen, so that the set $\mathcal{P}_{opt}(S)$ can be indexed by the set of
  matrices $Q \in {\mathcal S}_N^{>0}(\R)$ with $N$
  eigenvalues of multiplicity one, and with eigenvectors $\psi_k$
  satisfying~\eqref{e:cond_nec}.
 As it will become clear below, the matrix $Q$ of a 
pair $(\tilde J,Q)\in \mathcal{P}_{opt}(S)$ appears in the 
quantitative estimates of Theorem~\ref{thm:lin_alg} and
Theorem~\ref{th:CV_KB} through the constants $C_N^{(1)}$ and $C_{N}^{(2)}$\,. The construction
of the pair $(\tilde{J},Q)$ is also better understood by splitting the
two steps: (1) construction of $Q$\ and (2) when $Q$ is fixed, construction
of $\tilde{J}$\,.

\begin{remark}\label{rem:non_diag}
We would
like to stress that the set $\mathcal{P}_{opt}(S)$  does not provide {\em all} the
matrices $\tilde{J}\in \mathcal{A}_{N}(\R)$ such that
$\sigma(\tilde{B}_{J})
\subset \frac{\Tr(S)}{N} + i \R$\,. Indeed, first, we have assumed that
$\tilde{B}_{J}$ is diagonalizable and, second, in this case we have
assumed moreover that
$Q$ has $N$ eigenvalues of multiplicity one.

Actually the spectrum of
$\tilde{B}_{J}$ depends continuously on $\tilde{J}$. Hence any limit
$\tilde{J}=\lim_{n\to\infty}\tilde{J}_{n}$ in $\mathcal{A}_{N}(\R)$
with $(\tilde{J}_{n}, Q_{n})\in \mathcal{P}_{opt}(S)$ will lead to
$\sigma(\tilde{B}_{J})\subset \frac{\Tr(S)}{N} + i \R$\,. A
particular case is interesting: Fix the real orthonormal basis
$\{\psi_{j}\}_{j=1}^{N}$ and consider $Q_{\alpha}$ with the
eigenvalues $(\alpha,\ldots, \alpha^{N})$ with $\alpha>0$\,.
The unique associated antisymmetric matrix $\tilde{J}_{\alpha}$ is given by
$(\psi_{j}\,,\,\tilde{J}_{\alpha}\psi_{j})_{\R}=0$ and
$$
(\psi_{j}\,,\,\tilde{J}_{\alpha}\psi_{k})_{\R}=
-\frac{\alpha^{k}+\alpha^{j}}{\alpha^{k}-\alpha^{j}}
(\psi_{j}\,,\,S\psi_{k})_{\R}\,.
$$
Taking the limit as $\alpha\to +\infty$ or $\alpha\to 0^{+}$ leads to
$$
(\psi_{j}\,,\,\tilde{J}_{\infty}\psi_{k})_{\R}
=- \mbox{sign}(k-j)(\psi_{j}\,,\,S\psi_{k})_{\R},\quad
\tilde{J}_{0^{+}}=-\tilde{J}_{\infty}\,.
$$
Actually, for such a choice
$\tilde{J}_{opt}=\tilde{J}_{\infty}$ or
$\tilde{J}_{opt}=\tilde{J}_{0^{+}}$\,, 
the matrix $S+\tilde{J}_{opt}$
is triangular in the basis $(\psi_{j})_{1\leq j\leq N}$ and 
$\sigma(\tilde{B}_{J_{opt}})=\left\{\frac{\Tr(S)}{N}\right\}$\,. In
general (see for example Subsection~\ref{sec:2D}), the matrix
$\tilde{B}_{J_{opt}}$ may not be diagonalizable over $\bC$
and may have Jordan blocks.
\end{remark}

We end this section by providing a practical way to construct a couple
$(\tilde J, Q)$ satisfying~\eqref{e:J1} (or equivalently $(\tilde
J,Q)\in \mathcal{P}_{opt}(S)$), for a given $S \in   \mathcal{S}_{N}^{>0}(\R)$. The strategy is simple. We first build an orthonormal basis $\{\psi_{k} \}_{k=1}^{N}$ of $\R^{N}$ such
that~\eqref{e:cond_nec} is satisfied, then we {\em choose} the
eigenvalues $\{\lambda_k\}_{k=1}^{N}$ distinct and positive, and
define $\tilde{J}$ by~\eqref{e:Jsoln}. The only non-trivial task is
thus to build the orthonormal basis $\{\psi_{k} \}_{k=1}^{N}$\,.

\begin{proposition}\label{prop:basis} 
For every $S \in  \mathcal{S}_{N}^{>0}(\R)$,
 there exists an orthonormal basis $\{\psi_{k} \}_{k=1}^{N}$ of $\R^{N}$ such
that~\eqref{e:cond_nec} is satisfied.
\end{proposition}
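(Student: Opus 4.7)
The plan is to proceed by induction on $N$, using the intermediate value theorem to peel off one orthonormal vector at each step.

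For the base case $N=1$, any unit vector $\psi_1 \in \R$ satisfies $(\psi_1, S\psi_1)_\R = S = \Tr(S)/1$, so the claim is immediate.

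For the inductive step, assume the result holds in dimension $N-1$ and let $S \in \mathcal{S}_N^{>0}(\R)$. First I would produce a single unit vector $\psi_1 \in \R^N$ such that $(\psi_1, S\psi_1)_\R = \Tr(S)/N$. The quadratic form $f(x) = (x, Sx)_\R$ is continuous on the unit sphere $\mathbb{S}^{N-1}$, which for $N \geq 2$ is connected. Its range on $\mathbb{S}^{N-1}$ is the interval $[\lambda_{\min}(S), \lambda_{\max}(S)]$, and the target value $\Tr(S)/N$ is the arithmetic mean of the eigenvalues of $S$, so it lies in this interval. The intermediate value theorem then furnishes $\psi_1 \in \mathbb{S}^{N-1}$ with $(\psi_1, S\psi_1)_\R = \Tr(S)/N$.

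Next I would apply the induction hypothesis on $\psi_1^\perp$. Let $P$ denote the orthogonal projection of $\R^N$ onto the $(N-1)$-dimensional subspace $\psi_1^\perp$, and set $S' = PSP$ viewed as an endomorphism of $\psi_1^\perp$. For any $v \in \psi_1^\perp \setminus \{0\}$ we have $Pv=v$, hence $(v, S'v)_\R = (v, Sv)_\R > 0$, so $S' \in \mathcal{S}_{N-1}^{>0}(\psi_1^\perp)$. Moreover,
\begin{equation*}
\Tr(S') = \Tr(S) - (\psi_1, S\psi_1)_\R = \Tr(S) - \frac{\Tr(S)}{N} = \frac{N-1}{N}\Tr(S),
\end{equation*}
so the per-vector target value in $\psi_1^\perp$ is $\Tr(S')/(N-1) = \Tr(S)/N$, matching the original threshold. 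By the induction hypothesis there exists an orthonormal basis $\{\psi_2, \dots, \psi_N\}$ of $\psi_1^\perp$ such that $(\psi_k, S'\psi_k)_\R = \Tr(S)/N$ for $k=2,\dots,N$, and since $P\psi_k = \psi_k$ for such $k$ this gives $(\psi_k, S\psi_k)_\R = \Tr(S)/N$. Together with $\psi_1$, this yields an orthonormal basis of $\R^N$ satisfying~\eqref{e:cond_nec}.

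The only nontrivial step is the construction of $\psi_1$, which rests on the connectedness of $\mathbb{S}^{N-1}$ and the elementary fact that $\Tr(S)/N$ lies between the extremal eigenvalues of $S$; the remainder of the argument is the standard reduction to a lower-dimensional positive definite quadratic form on the orthogonal complement.
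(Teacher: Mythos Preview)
Your proof is correct and follows essentially the same approach as the paper's: induction on $N$, an intermediate-value argument to find a first unit vector with $(\psi_1,S\psi_1)_\R=\Tr(S)/N$, and then reduction to the $(N-1)$-dimensional orthogonal complement via the compressed operator $PSP$. The only minor difference is that the paper locates $\psi_1$ by rotating within a two-dimensional plane spanned by two basis vectors with values straddling $\Tr(S)/N$ (a choice that yields an explicit algorithm), whereas you invoke connectedness of $\mathbb{S}^{N-1}$ and the full range $[\lambda_{\min},\lambda_{\max}]$ of the Rayleigh quotient; both arguments are equally valid.
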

\begin{proof}
We  proceed by
induction on $N$, using some Gram-Schmidt orthonormalization
process. The result is obvious for $N=1$\,. For a positive integer
$N$\,, 
let us assume it is true for
$N-1$ and let us consider  $S \in  \mathcal{S}_{N}^{>0}(\R)$\,. Let us
set $T = \frac{S}{\Tr(S)}$\,. 
The matrix $T$ is in $ \mathcal{S}_{N}^{>0}(\R)$ with $\Tr(T) =
1$\,. Consequently $( \psi_{i}, T \psi_{i} )_{\R} >0, \; i=1,
\dots N $ and $\sum_{i=1}^{N} ( \psi_{i}, T \psi_{i} )_{\R} =
1$ for any orthonormal basis $\{\psi_{i} \}_{i=1}^{N}$ of $\R^N$\,.
 Assume that not all $ ( \psi_{i}, T \psi_{i}  )_{\R} $ are
equal to $1/N$\,. Then there exist $i_{0}, \, i_{1} \in \{1, \ldots, N \}$
such that
$$ 
( \psi_{i_{0}}, T \psi_{i_{0}} )_{\R} < \frac{1}{N}, \quad
( \psi_{i_{1}}, T \psi_{i_{1}} )_{\R} > \frac{1}{N}\,.
$$
Set $\psi_{t} = \cos(t) \psi_{i_{0}} + \sin(t)\psi_{i_{1}}$ and
consider the function $f(t) = ( \psi_{t} , T \psi_{t}
)_{\R}$\,. This function is continuous with $f(0) < 1/N $ and $f(\pi/2)
> 1/N$\,. Consequently, there exists a $t_{*} \in (0, \pi/2)$ such that
\begin{equation}\label{e:t1} 
( \psi_{t_{*}} , T \psi_{t_{*}})_{\R} = \frac{1}{N}\,.
\end{equation}
Let now $\Pi=I-\psi_{t_*} \left(\psi_{t_*}\right)^T$ denote the orthogonal projection to
$\mbox{Span}\big(\psi_{t_*}\big)^\perp$ and define
$$
T^{1} = \frac{N}{N-1} \Pi T \Pi\,.
$$
This operator is symmetric positive definite on $\mbox{Span}\big(\psi_{t_*}\big)^\perp$ with
\begin{align*} \Tr(T^{1}) &  =  \frac{N}{N-1}
\left(\Tr(T) - (\psi_{t_*} , T \psi_{t_*} )_{\R}
\right) = 1\,.
\end{align*}
It can thus be associated with a symmetric positive definite matrix in
${\mathcal M}_{N-1}(\R)$\,. 
By the induction hypothesis there exists an
orthonormal basis $\big(\tilde\psi_{2}, \ldots ,\tilde\psi_{N} \big)$ of
$\mbox{Span}\big(\psi_{t_*}\big)^\perp$ such that
$$
( \tilde \psi_{i}, T^{1} \tilde \psi_{i} )_{\R} = \frac{1}{N-1}, \quad i
= 2, \ldots, N\,.
$$
Let us consider the orthonormal basis of $\R^N$\,:
$$
 \tilde\psi_{i} = \left\{ \begin{array}{ c } \psi_{t_*}, \quad i=1,
\\ \tilde\psi_{i}, \quad i \geq 2.
            \end{array} \right.
$$
We obtain
$$
( \tilde\psi_{1}, T \tilde\psi_{1} )_{\R} = (
\psi_{t_*}, T \psi_{t_*} )_{\R} = \frac{1}{N}
$$
and, for $i \ge 2$\,,
$$
( \tilde\psi_{i}, T \tilde\psi_{i} )_{\R} = \frac{N-1}{N}
( \tilde\psi_{i}, T^{1} \tilde\psi_{i} )_{\R} =
\frac{1}{N}\,.
$$
This ends the induction argument.
\end{proof}
\begin{remark}\label{rem:tstar}
Finding $t_{*}$ such that~\eqref{e:t1} is satisfied yields a simple algebraic problem in two dimensions. Let
$(i_{0},i_{1})$ be the two indices introduced in the proof. The
 matrix $\big(  (\psi_{i}\,,\, T\psi_{j})_{\R} \big)_{i,j\in
 \left\{i_{0},i_{1}\right\}}\in \mathcal{M}_{2}(\R)$ is
$$
\left[
\begin{matrix}
  \alpha_{0} &\beta\\
\beta & \alpha_{1}
\end{matrix}
\right]
\quad\text{with}\quad \alpha_{0}<\frac{1}{N}\,,\,
\alpha_{1}>\frac{1}{N}\,,\, \beta\in\R\,.
$$
Then, $t_{*} \in (0,\pi/2)$ is given by
$$
\tan t_{*}=\frac{-\beta+\sqrt{\beta^{2}-\left(\alpha_{1}-\frac{1}{N}\right)\left(\alpha_{0}-\frac{1}{N}\right)}}{\alpha_{1}-\frac{1}{N}}
$$
and the vector $\psi_{t_{*}}$ by
$$
\psi_{t_{*}}=\frac{1}{\sqrt{1+\tan^{2}t_{*}}}(\psi_{i_{0}}+\tan t_{*}\psi_{i_{1}})\,.
$$
\end{remark} 

%
The above proof and Remark~\ref{rem:tstar} yield a practical
algorithm, in the spirit of the Gram-Schmidt procedure,
 to build an
orthonormal basis satisfying~\eqref{e:cond_nec}, see Figure~\ref{fig:algor}. This algorithm
is  used for the numerical experiments of
Section~\ref{sec:num}. Notice that in the third step of the algorithm,
only the vector $\psi_{n+1}$ is concerned by 
the Gram-Schmidt procedure. The chosen vector $\psi_{t_{*}}$
belongs to $\R \psi_{n}\oplus \R \psi_{n+1}$ and all the normalized
vectors $(\psi_{n+2},\ldots,\psi_{N})$ are already orthogonal to this plan.
\begin{figure}[htbf]
 \fbox{
\begin{minipage}[htbf]{12cm}
\begin{center}
 {\bf Algorithm for constructing the optimal nonreversible perturbation}
\end{center}
\begin{description}
\item Start from an arbitrary orthonormal basis $(\psi_1, \ldots, \psi_N)$.
\end{description}
{\bf for} $n = 1 :N-1   $ {\bf do}
\begin{enumerate}
\item Make a permutation of $(\psi_{n}, \ldots, \psi_N)$ so that
  $$(\psi_n,S\psi_n)_{\R} = \max_{k=n, \ldots, N}(\psi_k,S\psi_k)_{\R}
  > \Tr(S) / N$$
  and   
$$
(\psi_{n+1},S\psi_{n+1})_{\R} = \min_{k=n, \ldots,
    N}(\psi_k,S\psi_k)_{\R} < \Tr(S) / N\,.
$$
\item Compute $t_*$ such that $\psi_{t_*}=\cos(t_*) \psi_n + \sin(t_*)
  \psi_{n+1}$ satisfies $(\psi_{t_*},S\psi_{t_*})_{\R} = \Tr(S)/N$
  (see Remark~\ref{rem:tstar} above).
\item Use a Gram-Schmidt procedure to change the set of vectors
  $(\psi_{t_*},\psi_{n+1}, \ldots, \psi_N)$ to an orthonormal basis 
$(\psi_{t_*},\tilde \psi_{n+1}, \ldots, \tilde \psi_N)$\,. 
\end{enumerate}
{\bf end}
\end{minipage}
}
\caption{Algorithm for constructing the optimal nonreversible perturbation}\label{fig:algor}
\end{figure}

A simple corollary of Proposition~\ref{prop:basis} is the following:
\begin{proposition}\label{pr:opt}
For every $S \in {\mathcal S}_N^{>0}(\R)$\,, it is possible to build a
matrix $\tilde{J} \in {\mathcal A}_N(\R)$ such that
$$
\frac{\Tr (S)}{N}=\min {\rm Re}[\sigma(\tilde{B}_J)] \ge \min {\rm
  Re}[\sigma(S)]
$$
where $\tilde{B}_J=S+\tilde{J}$\,. Moreover, this holds with a strict inequality as
soon as
$S$ admits two different eigenvalues.
\end{proposition}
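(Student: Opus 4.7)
The plan is to combine the construction in Proposition~\ref{prop:basis} with the characterization~\eqref{e:Jsoln} of matrices $\tilde{J}$ yielding the optimal spectrum, and then to compare $\Tr(S)/N$ with the minimum eigenvalue of $S$.

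First, I would apply Proposition~\ref{prop:basis} to obtain an orthonormal basis $(\psi_{1},\ldots,\psi_{N})$ of $\R^{N}$ satisfying $(\psi_{k},S\psi_{k})_{\R} = \Tr(S)/N$ for all $k$, i.e.~condition~\eqref{e:cond_nec}. I would then pick any $N$ distinct positive reals $\lambda_{1},\ldots,\lambda_{N}$, set $Q$ to be the symmetric positive definite matrix with eigenpairs $(\lambda_{k},\psi_{k})$, and define $\tilde{J}\in\mathcal{A}_{N}(\R)$ by formula~\eqref{e:Jsoln} (together with the antisymmetry constraint $(\psi_{j},\tilde{J}\psi_{j})_{\R}=0$). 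By construction $(\tilde{J},Q)\in\mathcal{P}_{opt}(S)$, so the equivalent condition~\eqref{e:J1} of Proposition~\ref{prop:maximize} holds, which gives $\sigma(\tilde{B}_{J})\subset \frac{\Tr(S)}{N}+i\R$. In particular $\min{\rm Re}[\sigma(\tilde{B}_{J})] = \Tr(S)/N$.

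Next, I would compare this with $\min{\rm Re}[\sigma(S)]$. Since $S\in \mathcal{S}_{N}^{>0}(\R)$, its spectrum consists of positive real eigenvalues $\mu_{1}\le \mu_{2}\le \cdots \le \mu_{N}$ whose sum equals $\Tr(S)$. Therefore
\begin{equation*}
\min{\rm Re}[\sigma(S)] = \mu_{1} = \frac{1}{N}\sum_{k=1}^{N}\mu_{1} \le \frac{1}{N}\sum_{k=1}^{N}\mu_{k} = \frac{\Tr(S)}{N},
\end{equation*}
which is the desired inequality. The inequality is strict as soon as there exists some $k$ with $\mu_{k}>\mu_{1}$, i.e.~as soon as $S$ admits at least two distinct eigenvalues.

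The construction itself is essentially already contained in Propositions~\ref{prop:maximize} and~\ref{prop:basis}, so there is no serious obstacle; the only non-trivial point is the existence of an orthonormal basis satisfying~\eqref{e:cond_nec}, which has already been handled by Proposition~\ref{prop:basis}. The rest is elementary linear algebra (arithmetic-mean comparison for positive reals).
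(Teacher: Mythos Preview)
Your proposal is correct and follows exactly the route the paper intends: the paper states Proposition~\ref{pr:opt} as ``a simple corollary of Proposition~\ref{prop:basis}'' without writing out a proof, and your argument (construct $(\tilde J,Q)\in\mathcal P_{opt}(S)$ via Proposition~\ref{prop:basis} and~\eqref{e:Jsoln}, invoke Proposition~\ref{prop:maximize} to place $\sigma(\tilde B_J)$ on the line $\Tr(S)/N+i\R$, then compare $\Tr(S)/N$ with $\mu_1$ by the arithmetic-mean inequality) is precisely the intended justification.
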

In conclusion, the exponential rate of
convergence may be improved by using a non-reversible perturbation, if and only if
$S$ is not proportional to the identity.
We also refer to~\cite[Theorem 3.3]{Hwang_al1993} for another
characterization of the strict inequality case.

\subsection{Explicit computations in the two dimensional case}
\label{sec:2D}

In the two dimensional case ($N=2$), 
all the matrices $J$ such that $\sigma(B_J) \subset \Tr(S)/N + i \R$
can be characterized. Accordingly, explicit accurate estimate of the exponential decay
are available for the two-dimensional ordinary differential equation:
\begin{equation}
  \label{eq.ODE2}
\frac{dx_t}{dt}=-(I + J) S x_t \quad\text{ with }\quad
x_0~\text{given~in}~\R^{2}\,.
\end{equation}
After making the connection with our general construction of the
optimal matrices $J$ (see Definition~\ref{def:Popt}), we investigate, for a given
matrix $S \in  \mathcal{S}_{N}^{>0}(\R)$\,, the minimization, with
respect to $J$\,, of the prefactor in
 the exponential decay law. We would like in particular to discuss the optimization of
the constant factor in front of $\exp(-\Tr (S) t / 2)$\,.\\
Without loss of generality, we may assume that
$$S=\left[\begin{array}{cc} 1 & 0 \\ 0 & \lambda \end{array} \right] \text{ and } J=\left[\begin{array}{cc} 0 & a \\ -a & 0 \end{array} \right],$$
where $\lambda >0$ is fixed.
The eigenvalues of $B_{J}$ belong to $\Tr(S)/2=(1+\lambda)/2+i\R$ if
and only if
\begin{equation}\label{eq:cond_a}
a^2 \ge \frac{(1-\lambda)^2}{4\lambda}
\end{equation}
and then, the eigenvalues of $B_J=(I+J)S$ are
$$
\mu_{\pm}=\frac{\lambda +1 \pm i \sqrt{4\lambda a^2 -
    (1-\lambda)^2}}{2}
\,.
$$
When the inequality \eqref{eq:cond_a} is strict, the associated eigenvectors are
$$u_{\pm}=\left( \begin{array}{c} 1 \\ \alpha_{\pm} \end{array}
\right) \text{ with } \alpha_{\pm}=\frac{\mu_{\pm}-1}{a
  \lambda}=\frac{\lambda-1 \pm i \sqrt{4\lambda a^2 -
    (1-\lambda)^2}}{2a \lambda}
\,.
$$
The matrix $B_{J}$ equals
\begin{eqnarray*}
  &&B_J=P \left[\begin{array}{cc} \mu_+ & 0 \\ 0 & \mu_- \end{array}
  \right]P^{-1} \,,\\
\text{with}
&&P=\left[\begin{array}{cc} 1 & 1 \\ \alpha_+ & \alpha_- \end{array} \right] \text{ and } P^{-1}=\frac{1}{\alpha_- - \alpha_+}\left[\begin{array}{cc} \alpha_- & -1 \\ -\alpha_+ &1 \end{array} \right]\,.
\end{eqnarray*}
The case $a=\pm\frac{(1-\lambda)}{2\sqrt{\lambda}}$ gives the matrix 
$$
B_{J}=
\begin{pmatrix}
  1 &\pm\frac{\sqrt{\lambda}(1-\lambda)}{2}\\
\mp \frac{1-\lambda}{2\sqrt{\lambda}}&\lambda
\end{pmatrix}
$$
which has a Jordan block when $\lambda\neq 1$\,.
This ends the characterization of all the possible optimal $J$'s in
terms of the exponential rate.\\

Let us compare with the general construction of the pair
$(Q,\tilde{J}=S^{\frac 1 2}JS^{\frac 1 2})$, see
Definition~\ref{def:Popt}. The matrix $Q$ is diagonal in an orthonormal basis
$(\psi_{1},\psi_{2})$ which satisfies the
relation \eqref{e:cond_nec}. This yields
$$
|\psi_{1}^{1}|^{2}=|\psi_{1}^{2}|^{2}=|\psi_{2}^{1}|^{2}=|\psi_{2}^{2}|^{2}=\frac
1 2\,.
$$
Up to trivial symmetries one can fix 
$\psi_{1}=
\begin{pmatrix}
  \frac{1}{\sqrt 2}\\ \frac{1}{\sqrt 2}
\end{pmatrix}
$ 
and 
$
\psi_{2}= 
\begin{pmatrix}
  \frac{1}{\sqrt 2}\\ 
- \frac{1}{\sqrt 2}
\end{pmatrix}
$\,.
Then, from~\eqref{e:cond_J}, the eigenvalues $\lambda_{1}, \lambda_{2}$ of $Q$ must satisfy
$$
(\lambda_{2}-\lambda_{1})(-2a\sqrt{\lambda})=(\lambda_{2}+\lambda_{1})(1-\lambda)
$$
and the limiting cases $a=\pm\frac{1-\lambda}{2\sqrt{\lambda}}$ are
achieved only after taking the limit
$\frac{\lambda_{2}}{\lambda_{1}}\to +\infty$ or
$\frac{\lambda_{1}}{\lambda_{2}}\to +\infty$\,.\\
Assume now $a^2 > \frac{(1-\lambda)^2}{4\lambda}$ and consider
the two-dimensional Cauchy problem~\eqref{eq.ODE2}. Its solution equals
$$x_t=P \left[\begin{array}{cc} \exp(-\mu_+t) & 0 \\ 0 & \exp(-\mu_-
    t) \end{array} \right]P^{-1} x_0
\,,$$
which leads to 
$$
\|x_t\| \le \|P\| \|P^{-1}\| \exp\left(-\frac{1+\lambda}{2}t\right)
\|x_0\|\,,
$$
when $\|\cdot\|$ denotes either the Euclidean norm on vectors or the
associated matrix norm\,, $\|A\|=\sqrt{\max(\sigma(A^* A) )}$\,.
This yields the exponential convergence with rate
$\Tr(S)/2=(1+\lambda)/2$, as soon as $a$
satisfies~$a^{2}>\frac{(1-\lambda)^{2}}{4\lambda}$\,, while the
degenerate case $a^{2}=\frac{(1-\lambda)^{2}}{4\lambda}$ would give
an upper bound $C(1+t)e^{-\frac{1+\lambda}{2}t}$\,.
A more convenient matrix norm is the Frobenius norm given by
$\|A\|_{F}^{2}=\sum_{i,j=1}^{2}|A_{ij}|^{2}=\sum_{\alpha\in
  \sigma(A^{*}A)}\alpha$ with the equivalence in dimension~$2$\,,
$\frac{1}{\sqrt{2}}\|A\|_{F}\leq \|A\|\leq
\|A\|_{F}$\,.
By recalling
$\alpha_{+}=\overline{\alpha_{-}}$\,, we get
\begin{eqnarray*}
\|x_t\| 
&\leq&
\|P\|_{F} \|P^{-1}\|_{F} \exp\left(-\frac{1+\lambda}{2}t\right)\|x_{0}\|
\\
&\leq& 2  \frac{(1+|\alpha_+|^2)}{|\alpha_--\alpha_+|}
\exp\left(-\frac{1+\lambda}{2}t\right)\|x_{0}\|
\\
&\leq &
 2 (\lambda+1)  \frac{|a|}{\sqrt{4  \lambda a^2-(1-\lambda)^2}}
\exp\left(-\frac{1+\lambda}{2}t\right)\|x_{0}\|\,.
\end{eqnarray*}
Now, it is clear that the infimum of $\|P\|_F \|P^{-1}\|_F$
 is obtained asymptotically as $|a| \to \infty$ and equals
 $\frac{\lambda+1}{\sqrt{\lambda}}$\,. It corresponds to an antisymmetric matrix $J$ with infinite norm.

To end this section, we would like to discuss the situation when the
original dynamics (when $J=0$) has two separated time scales, namely
$\lambda$ is very large or very small. In the case $\lambda \ll 1$\,, we
observe that the optimal $\|P\|_F \|P^{-1}\|_F$ (and thus the optimal
$\|P\| \|P^{-1}\|$) scales like $\frac{1}{\sqrt{\lambda}}$\,, and that
this scaling in $\lambda$ is already achieved by taking
$a^2=\frac{(1-\lambda)^2}{2 \lambda}$ (twice the minimum value
in~\eqref{eq:cond_a}), since in this case, $\displaystyle\|P\|_F
\|P^{-1}\|_F=\sqrt{2} \frac{(\lambda+1)}{\sqrt{\lambda}}$\,. In terms of
rate of convergence to equilibrium, it means that, to get $\|x_t\|$ of
the order of $\|x_0\|/2$, say, it takes a time of order
$\ln(1/\lambda)$. This should be compared to the original dynamics
(for $a=0$), for which this time is of order $1/\lambda$\,. Of course, a
similar reasoning holds for $\lambda \gg 1$\,. Using an antisymmetric
perturbation of the original dynamics, we are able to
dramatically  accelerate convergence to equilibrium.

\section{Convergence to equilibrium for Gaussian laws and applications}\label{sec:gauss}

In this section, we use the results of the previous section in order
to understand the longtime behavior of the mean and the covariance of
$X_t$ solution to~\eqref{e:nonrev_quad}:
$$
dX_t=-(I+J)SX_t \, dt+\sqrt{2}\, dW_{t}\,.
$$ 
In particular, if $X_0$ is a Gaussian random variable (including the case where $X_{0}$ is deterministic), then $X_t$ remains a Gaussian
random variable for
all times, and understanding the longtime behavior of the mean $\E(X_t)$ and the covariance matrix  $\mbox{Var}(X_{t}) = \E(X_t \otimes X_t) - \E(X_t) \otimes \E(X_t)$ is equivalent to
understanding the longtime behavior of the density of the process $X_t$,
which is exactly Corollary~\ref{cor:CV_FP} in a very specific case.  Here and in the
following, $\otimes$ denote the tensor product: for two vectors $x$
and $y$ in $\R^N$, $x \otimes y=x y^T$ is a $N \times N$ matrix with
$(i,j)$-component $x_i y_j$\,.

\subsection{The mean}

Let us denote $x_t=\E(X_t)$\,, which is the solution to the ordinary differential equation
\begin{equation}\label{e:ode_symm}
\frac{d x_{t}}{d t} = - (I+J) S x_{t}\,,
\quad x_{0} = x\,.
\end{equation}
The longtime behavior of $x_t$ amounts to getting appropriate bounds on the
semigroup $e^{-(I + J) S t}$ or equivalently on $e^{-(S+\tilde{J})t}$\,.

When $J=0$\,, namely for the ordinary differential equation
\begin{equation*}
\frac{d x_{t}}{d t} = - S x_{t}\,,
\quad x_{0} = x\,,
\end{equation*}
we immediately deduce from the spectral representation of the positive
symmetric matrix $S$  that
\begin{equation}\label{e:conv_symm} 
\|x_{t}\| \leq e^{-\rho t}
\|x_{0} \|\,,
\end{equation} where
$$
\rho := \min \left\{\sigma (S) \right\}\,.
$$
The above bound implies that
$$
\left\|e^{-St} \right\| \leq e^{-\rho t}\,,
$$
where $\|M\|=\sup_{x \in \R^N, x\neq 0}\frac{\|Mx\|}{\|x\|}$\,. Notice that $\rho \le
\frac{\Tr (S)}{N}$\,.

We now derive a similar estimate for the semigroup generated by the perturbed matrix
$\tilde{B}_{J}=S+\tilde{J}$ (or equivalently $B_J=(I+J)S$),
when $(\tilde{J},Q)\in \mathcal{P}_{opt}$\,, and show that a better
exponential rate of convergence is obtained. As explained in the introduction, the price to pay for the improvement in the rate of convergence is the worsening of the constant (which is simply $1$ in the reversible case) in front of the exponential.
\begin{proposition}
\label{prop:conv_ode}
For
 $(\tilde{J},Q)\in
  \mathcal{P}_{opt}$ and $J=S^{-1/2}\tilde{J}S^{-1/2}$, the estimates
\begin{eqnarray}
\label{e:convode1}
\left\| e^{-(S+\tilde{J}) t}
\right\| &\leq &
\kappa(Q)^{1/2} \exp \left(-\frac{\Tr (S)}{N}t \right)\,,\\
\label{e:conv_ode_estim} 
\left\| e^{-(I+J)S t}
\right\| &\leq &\kappa(Q^{-1}S)^{1/2} \exp \left(-\frac{\Tr (S)}{N}t \right)\,,
\end{eqnarray}
hold for every $t\geq 0$\,.
\end{proposition}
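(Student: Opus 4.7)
The plan is to deduce both bounds from the fact that $\tilde{B}_{J}-\frac{\Tr(S)}{N}I$ is similar to a real antisymmetric matrix, a structural consequence of Proposition~\ref{prop:maximize}(iv).

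First I would rewrite \eqref{e:J1} as a Lyapunov identity. Decomposing $\tilde{B}_{J}=S+\tilde{J}$ and using $\tilde{J}^{T}=-\tilde{J}$, a direct expansion gives
$$
\tilde{B}_{J}Q+Q\tilde{B}_{J}^{T}=SQ+\tilde{J}Q+QS-Q\tilde{J}=SQ+QS+\bigl(\tilde{J}Q-Q\tilde{J}\bigr)=\frac{2\,\Tr(S)}{N}Q,
$$
where the last equality uses \eqref{e:J1}. Conjugating by $Q^{-1/2}$ on both sides then shows that
$$
K:=Q^{-1/2}\Bigl(\tilde{B}_{J}-\frac{\Tr(S)}{N}I\Bigr)Q^{1/2}
$$
satisfies $K+K^{T}=0$, so $K$ is real antisymmetric. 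Consequently $e^{-Kt}$ is orthogonal for every $t\geq 0$ and $\|e^{-Kt}\|=1$ in the Euclidean operator norm.

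For \eqref{e:convode1} I would then write
$$
e^{-\tilde{B}_{J}t}=e^{-\frac{\Tr(S)}{N}t}\,Q^{1/2}e^{-Kt}Q^{-1/2},
$$
so that
$$
\|e^{-\tilde{B}_{J}t}\|\leq \|Q^{1/2}\|\,\|Q^{-1/2}\|\,e^{-\frac{\Tr(S)}{N}t}=\kappa(Q)^{1/2}\,e^{-\frac{\Tr(S)}{N}t},
$$
using that $Q$ is symmetric positive definite so $\|Q^{\pm 1/2}\|^{2}=\|Q^{\pm 1}\|$.

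For \eqref{e:conv_ode_estim} the same similarity argument applies after transporting $K$ along the change of variables of Section~\ref{sec:resc}. Since $B_{J}=S^{-1/2}\tilde{B}_{J}S^{1/2}$, setting $P:=S^{-1/2}Q^{1/2}$ yields $B_{J}-\frac{\Tr(S)}{N}I=PKP^{-1}$ and therefore
$$
\|e^{-B_{J}t}\|\leq \|P\|\,\|P^{-1}\|\,e^{-\frac{\Tr(S)}{N}t}.
$$
The main technical step is the identification of $\|P\|\,\|P^{-1}\|$ with a quantity bounded by $\kappa(Q^{-1}S)^{1/2}$. I would compute $P^{T}P=Q^{1/2}S^{-1}Q^{1/2}$ and $P^{-1}(P^{-1})^{T}=Q^{-1/2}SQ^{-1/2}$, which are symmetric positive definite matrices similar (respectively) to $S^{-1}Q$ and $Q^{-1}S$; hence
$$
\|P\|^{2}\|P^{-1}\|^{2}=\lambda_{\max}(S^{-1}Q)\,\lambda_{\max}(Q^{-1}S)=\frac{\lambda_{\max}(Q^{-1}S)}{\lambda_{\min}(Q^{-1}S)},
$$
and this spectral ratio is in turn dominated by $\|Q^{-1}S\|\,\|S^{-1}Q\|=\kappa(Q^{-1}S)$ because the spectral radius of any invertible matrix is always at most its operator norm. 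The main obstacle is thus not the dynamics but bookkeeping: one must insist on transporting the similarity $Q^{1/2}$ rather than applying \eqref{e:convode1} together with $e^{-B_{J}t}=S^{-1/2}e^{-\tilde{B}_{J}t}S^{1/2}$, as the latter would only yield the looser constant $\bigl(\kappa(S)\kappa(Q)\bigr)^{1/2}$.
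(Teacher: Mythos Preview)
Your proof is correct and follows essentially the same approach as the paper: both rest on the Lyapunov identity $\tilde{B}_{J}Q+Q\tilde{B}_{J}^{T}=\frac{2\Tr(S)}{N}Q$ equivalent to \eqref{e:J1}, then pass from the $Q^{-1}$-weighted norm to the Euclidean norm via $\|Q^{\pm 1/2}\|$ (respectively $\|S^{-1/2}Q^{1/2}\|$ and $\|Q^{-1/2}S^{1/2}\|$) to obtain the stated condition-number constants. The only difference is packaging---the paper differentiates $\|y_{t}\|_{Q^{-1}}^{2}$ along the ODE to get the exact decay, whereas you conjugate by $Q^{1/2}$ to the antisymmetric matrix $K$ and use that $e^{-Kt}$ is orthogonal---but these are two equivalent formulations of the same computation.
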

\begin{proof} Consider the ordinary differential equation
\begin{equation}\label{e:ode} 
\frac{d y_{t}}{d t} = - (S+\tilde{J})
y_{t}\,, \quad y_{0}= y\,.
\end{equation} 
We introduce the scalar product $( \cdot,\cdot )_{Q^{-1}} : =
 ( \cdot , Q^{-1} \cdot )_{\R}$ on $\R^{N}$ with the
 corresponding norm $\|\cdot \|_{Q^{-1}}$\,. We
calculate:
\begin{align*} 
\frac{d}{d t}\|y_{t} \|^{2}_{Q^{-1}} 
&
=  - 2 ( y_t , Q^{-1} (S + \tilde{J})
y_{t} )_{\R} \\ 
& =  - \left( y_{t} , \left(Q^{-1}
    S + Q^{-1} \tilde{J}  + S Q^{-1} - \tilde{J}Q^{-1} \right) y_{t} \right)_{\R} \\ 
& =  -
\left( y_{t} ,\frac{2 \Tr (S)}{N}Q^{-1} y_{t}
\right)_{\R} \\ 
& =  -\frac{2 \Tr (S)}{N} \| y_{t}\|^{2}_{Q^{-1}}\,.
\end{align*}
In the above, we have used the identity
$$
Q^{-1} \tilde{J} - \tilde{J}Q^{-1} = - S Q^{-1} - Q^{-1} S + \frac{2 \Tr (S)}{N} Q^{-1}
$$
which follows from~\eqref{e:J1} after multiplication on the left and on
the right by $Q^{-1}$. From the above we conclude that
$$
\| y_{t}\|^{2}_{Q^{-1}} = e^{-2\frac{\Tr S}{N}t}\|y\|^{2}_{Q^{-1}}\,.
$$
We now use the definition of the norm $\|\cdot\|_{Q^{-1}}$ to deduce
that
\begin{align*}
  \|y_{t}\|&\leq \|Q^{1/2}\| \|Q^{-1/2}y_{t}\|\\
&=\|Q^{1/2}\| \|y_{t}\|_{Q^{-1}}\\
&\leq 
 e^{-\frac{\Tr S}{N}t}\|Q^{1/2}\| \|y\|_{Q^{-1}}
\\
&\leq
 e^{-\frac{\Tr S}{N}t}\|Q^{1/2}\|\|Q^{-1/2}\| \|y\|\\
& =e^{-\frac{\Tr S}{N}t}\kappa(Q)^{1/2}\|y\|\,.
\end{align*}
For the second estimate, we set $x_{t}=S^{-1/2}y_{t}$ and obtain
\begin{align*}
\|x_{t}\| &\leq \|S^{-1/2}Q^{1/2}\| \|Q^{-1/2}y_{t}\| \\
&\leq  e^{-\frac{\Tr(S)}{N}t} \|S^{-1/2}Q^{1/2}\| \|Q^{-1/2} y \| \\ 
&\leq e^{-\frac{\Tr(S)}{N}t}\kappa(Q^{-1}S)^{1/2} \|x\|\,.
\end{align*}
\end{proof}
Proposition~\ref{prop:conv_ode} shows that, for a well chosen matrix
$J$\,, 
the mean $x_t=\E(X_t)$
converges to zero exponentially fast with a rate
$\frac{\Tr(S)}{N}$. Equation~\eqref{eq:lin_alg} in Theorem~\ref{thm:lin_alg} is a simple
corollary of~\eqref{e:conv_ode_estim} and the inequality
$\kappa(Q^{-1}S)^{1/2} \le \kappa(Q)^{1/2} \kappa(S)^{1/2}$\,, so that
$C_N^{(1)}$ in~\eqref{eq:lin_alg} can be chosen as
\begin{equation}\label{eq:CN_ODE}
C_N^{(1)}=\kappa(Q)^{1/2}\,.
\end{equation}

\begin{remark}\label{rem:CN_ODE}
Let us make a remark concerning the constant $C_N^{(1)}$
in~\eqref{eq:lin_alg}, using the upper bound~\eqref{eq:CN_ODE}. It is
possible to have $C_N$ independent of $N$\,, while keeping the norm 
of the perturbation $\tilde J$ under control. More precisely, for a given orthonormal basis $(\psi_k)$
satisfying~\eqref{e:cond_nec}, let us consider the eigenvalues
$\lambda_k=N+k$\,. On the one hand, $C_N^{(1)}$ remains small since $\kappa(Q)
=2$\,. On the other hand, using~\eqref{e:Jsoln}, we have
\begin{align*}
\| \tilde J\|_F^2 & = 2 \sum_{j< k} \left(
  \frac{\lambda_k+\lambda_j}{\lambda_k-\lambda_j} \right)^2 (\psi_j, S
\psi_k)_\R^2\\
& \le  2  ( 4N)^2 \sum_{j<k} (\psi_j, S \psi_k)_\R^2\\
& \le 16 N^2 \| S\|_F^2\,.
\end{align*}
Thus, the norm of $\tilde J$ (compared to the one of $S$) remains
linear in $N$\,. 
\end{remark}

\subsection{The covariance}

%
%

Let us again consider $X_t$ solution to~\eqref{e:nonrev_quad}, and let
us introduce the covariance
$$
\Sigma_{t} = \E( X_{t} \otimes X_{t} ) - \E( X_{t}) \otimes \E(X_{t} )\,,
$$
which satisfies the ordinary differential
equation:
\begin{equation}\label{eq:covar}
\frac{d \Sigma_{t}}{dt}= -(I + J) S \Sigma_{t} - \Sigma_{t} S (I - J) + 2 I\,.
\end{equation}
The equilibrium variance is
$\Sigma_{\infty} = S^{-1}$\,.
\begin{proposition}
  \label{pr.eqvariance} 
For
 $(\tilde{J},Q)\in
  \mathcal{P}_{opt}$ and $J=S^{-1/2}\tilde{J}S^{-1/2}$\,, the estimate
\begin{equation}\label{e:estim_var} \left\|\Sigma_{t} - S^{-1}
\right\| \leq \kappa(Q^{-1}S) \exp \left(-2\frac{\Tr
(S)}{N} t\right)\left\|\Sigma_{0}-S^{-1}\right\|
\end{equation}
holds for all $t\geq 0$\,, when the matricial norm is induced by
the Euclidean norm on $\R^{N}$.
\end{proposition}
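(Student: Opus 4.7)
My plan is to reduce the estimate to the ODE bound of Proposition~\ref{prop:conv_ode} by centering the covariance and observing that the resulting equation is a Lyapunov-type equation whose solution is explicit.

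First, I would introduce $M_{t} = \Sigma_{t} - S^{-1}$ and derive the homogeneous equation it satisfies. Inserting $\Sigma_{t} = M_{t} + S^{-1}$ into~\eqref{eq:covar} and using that $(I+J) + (I-J) = 2I$, the inhomogeneous term $2I$ is exactly cancelled by $-(I+J)SS^{-1} - S^{-1}S(I-J) = -(I+J) - (I-J) = -2I$. Therefore
\begin{equation*}
\frac{dM_{t}}{dt} = -B_{J} M_{t} - M_{t} B_{J}^{T},\qquad B_{J}=(I+J)S,
\end{equation*}
with $B_{J}^{T} = S(I-J)$ because $J^{T}=-J$.

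Second, this linear equation in $\mathcal{M}_{N}(\R)$ has the explicit solution
\begin{equation*}
M_{t} = e^{-B_{J} t}\, M_{0}\, e^{-B_{J}^{T} t}.
\end{equation*}
Indeed, differentiating the right-hand side gives back the equation, and the initial condition is satisfied. Taking the operator norm induced by the Euclidean norm, submultiplicativity yields
\begin{equation*}
\|M_{t}\| \leq \|e^{-B_{J} t}\|\cdot\|M_{0}\|\cdot\|e^{-B_{J}^{T} t}\|.
\end{equation*}

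Third, I use that the spectral norm is invariant under transposition: $\|e^{-B_{J}^{T} t}\| = \|(e^{-B_{J} t})^{T}\| = \|e^{-B_{J} t}\|$. Then Proposition~\ref{prop:conv_ode}, specifically the bound~\eqref{e:conv_ode_estim}, gives $\|e^{-B_{J} t}\| \leq \kappa(Q^{-1}S)^{1/2}\exp(-\Tr(S)\,t/N)$, and squaring yields
\begin{equation*}
\|M_{t}\| \leq \kappa(Q^{-1}S)\exp\!\left(-2\tfrac{\Tr(S)}{N}t\right)\|M_{0}\|,
\end{equation*}
which is exactly~\eqref{e:estim_var}. None of the steps are obstacles: the only delicate point is checking the cancellation of the source term $2I$ in step one, and realizing that one does not need a separate estimate for the transposed semigroup since the spectral norm is transposition-invariant (so no extra constant beyond the square of the one from Proposition~\ref{prop:conv_ode} is introduced).
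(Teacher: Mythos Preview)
Your proof is correct and follows essentially the same approach as the paper: the paper also writes $\Sigma_t = S^{-1} + e^{-tB_J}(\Sigma_0 - S^{-1})e^{-tB_J^T}$, then invokes Proposition~\ref{prop:conv_ode} together with $\|e^{-tB_J^T}\| = \|e^{-tB_J}\|$. The only cosmetic difference is that you derive this formula by checking the homogeneous equation for $M_t = \Sigma_t - S^{-1}$, whereas the paper simply states it with a reference.
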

\begin{proof}
The solution to \eqref{eq:covar} (see e.g.~\cite{Lorenzi,snyders-zakai-70}), $\Sigma_t$ is
\begin{equation}\label{e:var} \Sigma_{t} = S^{-1} + e^{ -t B_{J} }
(\Sigma_{0}-S^{-1}) e^{-tB_{J}^{T}}.
\end{equation} 
The result then follows from the estimate on $\|e^{-tB_{J}}\|$  in
Proposition~\ref{prop:conv_ode} above
and
$\|e^{-tB_{J}^{T}}\|=\|(e^{-tB_{J}})^{T}\|=\|e^{-tB_{J}}\|$\,.
\end{proof}

\subsection{Gaussian densities}

As a corollary of Proposition~\ref{prop:conv_ode} and
Proposition~\ref{pr.eqvariance}, we get the following convergence to
the gaussian density  (see~\eqref{eq:psi_infty_lin})
\begin{equation*}
\psi_\infty(x)=\frac{\det(\Sigma_\infty)^{-1/2}}{(2\pi)^{N/2}}
\exp\left(-\frac{x^T \Sigma_\infty^{-1}
    x}{2}\right)\,,\quad\text{with}\quad
\Sigma_{\infty}^{-1}=S\,.
\end{equation*}
\begin{proposition}\label{pr.gaussian}
Assume that $X_t$ solves \eqref{e:nonrev_quad} while $X_0$
is a Gaussian random variable, so that $X_t$ is a Gaussian random
variable for all time $t \ge 0$\,, with the density $\psi_{t}^{J}$\,.
Assume moreover that $J=S^{-1/2}
\tilde J S^{-1/2}$, and that $(\tilde J, Q)$ are chosen in ${\mathcal
  P}_{opt}$\,.
Then,
the inequality
\begin{align*}
\|\psi_t^J - \psi_\infty\|_{L^2(\psi_\infty^{-1})}^{2} &\le
N2^{N}e^{-2\frac{\Tr(S)}{N}(t-t_{0})}
\\
&\quad \times \left[1+\|x_{0}\|^{2}
\exp\left(2e^{-2\frac{\Tr(S)}{N}(t-t_{0})}\|x_{0}\|^{2}\right) \right]\,,
\end{align*}
holds for all  times $t$ larger than
\begin{equation}
  \label{eq.t0}
t_{0}=\frac{N}{2\Tr
  S}\ln\left[4(1+\|S\|)\kappa(Q^{-1}S)(1+\|S\Sigma_{0}\|)\right]\,.
\end{equation}

\end{proposition}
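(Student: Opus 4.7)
The key observation is that since~\eqref{e:nonrev_quad} is a linear SDE with additive Gaussian noise and $X_0$ is Gaussian, $\psi_t^J$ is itself Gaussian for every $t\ge 0$, with mean $m_t=e^{-tB_J}x_0$ (where $x_0=\E(X_0)$) and covariance $\Sigma_t$ given by~\eqref{e:var}. Thus
\[
\|\psi_t^J-\psi_\infty\|_{L^2(\psi_\infty^{-1})}^2 = \int (\psi_t^J)^2\psi_\infty^{-1}\,dx - 1
\]
can be computed in closed form as a Gaussian integral and then controlled via Propositions~\ref{prop:conv_ode} and~\ref{pr.eqvariance}.

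\textbf{Step 1 (closed form).} Writing both Gaussian densities explicitly, the integrand of $\int (\psi_t^J)^2\psi_\infty^{-1}\,dx$ is proportional to $\exp(-x^T A_t x + \text{linear in }x)$ with $A_t=\Sigma_t^{-1}-S/2$, so the integral converges precisely when $A_t>0$. Introducing the rescaled quantities
\[
\mu_t = S^{1/2}m_t, \qquad M_t = S^{1/2}(\Sigma_t-S^{-1})S^{1/2},
\]
a direct computation based on $A_t=\tfrac12 S^{1/2}(I+M_t)^{-1}(I-M_t)S^{1/2}$, $(I+M_t)(I-M_t)=I-M_t^2$, and the identity $S/2+(S/2)A_t^{-1}(S/2)=S^{1/2}(I-M_t)^{-1}S^{1/2}$ simplifies the Gaussian integral after completion of the square to the clean closed form
\[
\|\psi_t^J-\psi_\infty\|_{L^2(\psi_\infty^{-1})}^2 = \frac{\exp\bigl(\mu_t^T(I-M_t)^{-1}\mu_t\bigr)}{\det(I-M_t^2)^{1/2}}-1,
\]
valid whenever $\|M_t\|<1$ (which, since $M_t$ is symmetric, is equivalent to $A_t>0$).

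\textbf{Step 2 (exponential bounds).} From~\eqref{e:conv_ode_estim}, $\|\mu_t\|^2\le\|S\|\kappa(Q^{-1}S)e^{-2\Tr(S)t/N}\|x_0\|^2$. For $M_t$, combining $\|M_t\|\le\|S\|\|\Sigma_t-S^{-1}\|$ with Proposition~\ref{pr.eqvariance} and the algebraic estimate $\|\Sigma_0-S^{-1}\|\le\|S^{-1}\|(1+\|S\Sigma_0\|)$ yields an exponential bound of the form $\|M_t\|\le C(S,Q,\Sigma_0)e^{-2\Tr(S)t/N}$. The logarithmic expression for $t_0$ in~\eqref{eq.t0} is precisely what is needed so that for $t\ge t_0$ one simultaneously has $\|M_t\|\le \tfrac12 e^{-2\Tr(S)(t-t_0)/N}$ (in particular $\|M_t\|\le 1/2$, so the closed form of Step~1 is valid) and $\|\mu_t\|^2\le \tfrac14 e^{-2\Tr(S)(t-t_0)/N}\|x_0\|^2$.

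\textbf{Step 3 (elementary inequalities).} Splitting
\[
\frac{e^{\mu_t^T(I-M_t)^{-1}\mu_t}}{\det(I-M_t^2)^{1/2}}-1 = \bigl(\det(I-M_t^2)^{-1/2}-1\bigr) + \det(I-M_t^2)^{-1/2}\bigl(e^{\mu_t^T(I-M_t)^{-1}\mu_t}-1\bigr),
\]
and using $\|M_t\|\le 1/2$, one bounds $\det(I-M_t^2)^{-1/2}\le 2^N$ (via $-\log(1-\lambda^2)\le 2\lambda^2$ for $|\lambda|\le 1/2$ and $e^{N/4}\le 2^N$), $\det(I-M_t^2)^{-1/2}-1\le N2^N\|M_t\|^2$, and $\|(I-M_t)^{-1}\|\le 2$. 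Applying $e^x-1\le xe^x$ to $x=\mu_t^T(I-M_t)^{-1}\mu_t\le 2\|\mu_t\|^2$ and plugging in the exponential bounds of Step~2 produces the claimed inequality after absorbing numerical constants into the prefactor $N2^N$. The main technical obstacle is Step~2: balancing the different condition-number quantities ($\kappa(Q^{-1}S)$ from Proposition~\ref{pr.eqvariance}, $\kappa(S)$ from the rescaling, and $(1+\|S\|)$, $(1+\|S\Sigma_0\|)$ from the algebraic reduction) so that the two exponential decays combine cleanly with the prefactor $4(1+\|S\|)\kappa(Q^{-1}S)(1+\|S\Sigma_0\|)$ appearing in~\eqref{eq.t0}.
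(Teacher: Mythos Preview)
Your approach is essentially the same as the paper's: compute the Gaussian integral $\int(\psi_t^J)^2\psi_\infty^{-1}$ in closed form, control it via the exponential decay of the mean and covariance (Propositions~\ref{prop:conv_ode} and~\ref{pr.eqvariance}), and finish with the elementary inequalities $e^x-1\le xe^x$ and a determinant bound. The only cosmetic difference is that you parametrize by the \emph{symmetric} matrix $M_t=S^{1/2}(\Sigma_t-S^{-1})S^{1/2}$, whereas the paper uses the (generally non-symmetric) $R_t=I-S\Sigma_t=-S^{1/2}M_tS^{-1/2}$; the two are similar, so $\det(I-M_t^2)=\det(I-R_t^2)$ and your exponent $\mu_t^T(I-M_t)^{-1}\mu_t$ coincides with the paper's $x_t^T[2(I+R_t)^{-1}-I]\Sigma_t^{-1}x_t$ after a short computation. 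Your formulation is arguably cleaner. The constant-balancing you flag in Step~2 is exactly where the paper's own argument is most delicate as well; to get $\|M_t\|\le\tfrac14$ directly from $\|\Sigma_t-\Sigma_\infty\|$ one should use $M_t=e^{-t\tilde B_J}M_0e^{-t\tilde B_J^T}$ together with~\eqref{e:convode1} rather than the cruder bound $\|M_t\|\le\|S\|\|\Sigma_t-S^{-1}\|$.
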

This result is related to the result stated in
Corollary~\ref{cor:CV_FP}, that will be proven
 in Section~\ref{sec:diffusion}. Corollary~\ref{cor:CV_FP}
provides a better and uniform in time quantitative information (which
has also a better behavior with respect to the dimension $N$ according to
\eqref{eq:CN_PDE} and Remark~\ref{rem:CN_PDE}). 
On the contrary, it requires more regularity than Proposition~\ref{pr.gaussian}
which does not assume  $\psi^J_0 \in
L^2(\R^N,\psi_\infty^{-1} \, dx)$\,. 
Of course, with initial data outside $L^2(\R^N,\psi_\infty^{-1} \, dx)$\,,   the convergence
estimate makes sense only for sufficiently large times (hence the
introduction of the positive time $t_0$ in Proposition~\ref{pr.gaussian}).
\begin{proof}
The Gaussian random vector $X_t$ has the
 mean $x_t$\,, which solves \eqref{e:ode_symm}, and the covariance $\Sigma_t$\,,
solution to~\eqref{eq:covar}, so that
$$
\psi^J_t(x)=\frac{\det(\Sigma_t)^{-1/2}}{(2\pi)^{N/2}} \exp\left( - \frac{(x-x_t)^T
    \Sigma_t^{-1} (x-x_t)}{2} \right)\,.
$$
When $t\geq t_{0}$\,, Proposition~\ref{pr.eqvariance} gives
$\|\Sigma_t-\Sigma_\infty\| \le \frac 1 4 \| \Sigma_\infty\|$ and thus, 
$
\|\Sigma_{\infty}^{-\frac 1 2}\Sigma_{t}\Sigma_{\infty}^{-\frac 1
  2}-I\|\leq \frac 1 4$\,, which yields $\frac{3}{4}\Sigma_{\infty}\leq
\Sigma_{t}\leq \frac{5}{4}\Sigma_{\infty}$ and
$\frac{4}{5}\Sigma_{\infty}^{-1}\leq
\Sigma_{t}^{-1}\leq \frac{4}{3}\Sigma_{\infty}^{-1}$\,.
In particular $\frac{\Sigma_{t}^{-1}}{2}\leq \Sigma_{\infty}^{-1}$
allows to compute
\begin{align*}
1&+ \|\psi_t^J - \psi_\infty\|_{L^2(\psi_\infty^{-1})}^2
=1+ \int_{\R^N} \left(\psi_t^J - \psi_\infty\right)^2 \psi_\infty^{-1}
=\int_{\R^N} \frac{(\psi_t^J)^2}{\psi_\infty} \\
&=(2\pi)^{-N/2} \frac{\det(\Sigma_t)^{-1}}{\det(\Sigma_\infty)^{-1/2}}
\int_{\R^N} \exp\left( - (x-x_t)^T
    \Sigma_t^{-1} (x-x_t)  + \frac{x^T \Sigma_\infty^{-1} x}{2} \right).
\end{align*}
We then use the relation, for $A$ and $B$ in $\mathcal{S}_{N}^{>0}(\R)$,
\begin{align*}
  (x-x_{t})^{T}A&(x-x_{t}) -x^{T}Bx\\ 
&=
 (x-(I-A^{-1}B)^{-1}x_{t})^{T}(A-B)(x-(I-A^{-1}B)^{-1}x_{t})
\\
&\quad +x_{t}^{T}\left[A-A(A-B)^{-1}A\right]x_{t}\,,
\end{align*}
with $A=\Sigma_{t}^{-1}$ and $B=\frac{\Sigma_{\infty}^{-1}}{2}$ in
order to get
\begin{align}
1&+ \|\psi_t^J - \psi_\infty\|_{L^2(\psi_\infty^{-1})}^2  \nonumber \\
&=(2\pi)^{-N/2} \frac{\det(\Sigma_t)^{-1}}{\det(\Sigma_\infty)^{-1/2}}
\times \pi^{\frac N 2} \det\left(\Sigma_t^{-1} -
\frac{\Sigma_\infty^{-1}}{2}\right)^{-1/2}  
\nonumber \\
& \quad \times \exp\left( x_t^T
\left[\Sigma_{t}^{-1}(\Sigma_{t}^{-1}-\frac{\Sigma_{\infty}^{-1}}{2})^{-1}\Sigma_{t}^{-1}-\Sigma_{t}^{-1}
\right] x_{t}\right)
\nonumber 
\\
&=\frac{1}{\det(\Sigma_{\infty}^{-1}\Sigma_{t})^{\frac 1
    2}\det(2I-\Sigma_{\infty}^{-1}\Sigma_{t})^{\frac 1 2}}\nonumber
\\
&\quad\times \exp\left( x_t^T
\left[2(2I-\Sigma_{\infty}^{-1}\Sigma_{t})^{-1}-I
\right]\Sigma_{t}^{-1} x_{t}\right)\,.
\nonumber
\end{align}
After setting $R_{t}=I-\Sigma_{\infty}^{-1}\Sigma_{t}$\,, we deduce
\begin{align}
\|\psi_t^J& - \psi_\infty\|_{L^2(\psi_\infty^{-1})}^2 \nonumber \\
\nonumber
&= \frac{1}{\det(I-R_{t}^{2})^{\frac 1 2}}-1
\nonumber
\\
&\quad +
 \frac{1}{\det(I-R_{t}^{2})^{\frac 1
     2}}\times\left[\exp\left(x_{t}^{T}[2(I+R_{t})^{-1}-I])\Sigma_{t}^{-1}x_{t}
\right)-1\right]\,. \label{eq:expo}
\end{align}
Let us start with the determinant $\det(I-R_{t}^{2})$\,. the condition $t\geq t_{0}$ and
Proposition~\ref{pr.eqvariance} give
$$
\|R_{t}\|=\|I-\Sigma_{\infty}^{-1}\Sigma_{t}\|
\leq \frac{e^{-2\frac{\Tr(S)}{N}(t-t_{0})}}{4} 
\quad\text{and}\quad \|R_{t}^{2}\|\leq
\frac{e^{-4\frac{\Tr(S)}{N}(t-t_{0})}}{16}\,.
$$
With $\|R_{t}^{2}\|\leq \frac{1}{16}$\,, we know
$$
|\ln \det(I-R_{t}^{2})| \leq |\Tr(\ln (I-R_{t}^{2}))| \leq N
\|\ln(I-R_{t}^{2})\|
\leq -N\ln (1-\|R_{t}^{2}\|)\,.
$$
We deduce
$$
\frac{1}{\det(I-R_{t}^{2})^{\frac 1 2}}\leq
(1-\|R_{t}^{2}\|)^{-\frac{N}{2}}
\leq
\left(\frac{16}{15}\right)^{\frac N 2}\leq 2^{\frac N 2}\,.
$$
Concerning the exponential term in~\eqref{eq:expo}, Proposition~\ref{prop:conv_ode} implies that the absolute value 
$\left|x_{t}^{T}\left[2(I+R_{t})^{-1}-I\right]\Sigma_{t}^{-1}x_{t}\right|$
is smaller than
$$
(1+2\|(I+R)^{-1}\|)\|\Sigma_{t}^{-1}\| \times
\kappa(Q^{-1}S)\exp\left(-2\frac{\Tr(S)}{N}t\right)\|x_{0}\|^{2}\,.
$$
The inequality $\|(1+R)^{-1}\|\leq (1-\|R\|)^{-1}\leq \frac{4}{3}$ and
the condition $t\geq t_{0}$ imply $\|\Sigma_{t}^{-1}\|\leq
\frac{4}{3}\|\Sigma_{\infty}^{-1}\|=\frac{4}{3}\|S\|$ and
$$
\left|x_{t}^{T}\left[2(I+R_{t})^{-1}-I\right]\Sigma_{t}^{-1}x_{t}\right|
\leq \frac{44}{9\times 4}e^{-2\frac{\Tr(S)}{N}(t-t_{0})}\|x_{0}\|^{2}\,.
$$
We have proved
\begin{align}
\|\psi_{t}^{I}-\psi_{\infty}\|_{L^{2}(\psi_{\infty}^{-1})}^{2}
&\leq
\left[ \left(1-\frac{e^{-4\frac{\Tr(S)}{N}(t-t_{0})}}{16} \right)^{-\frac N
    2}-1\right]
\nonumber
\\
&\quad
+
2^{\frac N 2}
\left[\exp
  \left(\frac{11}{9}e^{-2\frac{\Tr(S)}{N}(t-t_{0})}\|x_{0}\|^{2} \right)-1\right]\,.
\nonumber
\end{align}
By using $(1-x)^{-N/2}\le 1 + N 2^{N/2} x$ when $x \in (0,1/2)$ for the
first term, and $e^{y}-1\leq ye^{y}$ when $y\geq 0$ for the second term we
finally obtain
\begin{align}
\nonumber
\|\psi_{t}^{I}-\psi_{\infty}\|_{L^{2}(\psi_{\infty}^{-1})}^{2}
&\leq
N2^{\frac N 2}e^{-4\frac{\Tr(S)}{N}(t-t_{0})}
\\
&\quad
+ \frac{11}{9}2^{\frac{N}{2}}e^{-2\frac{\Tr(S)}{N}(t-t_{0})}\|x_{0}\|^{2}
\exp \left(\frac{11}{9} e^{-2\frac{\Tr(S)}{N}(t-t_{0})}\|x_{0}\|^{2} \right)\,,
\nonumber
\end{align}
which yields the result.
\end{proof}


\subsection{General initial densities}
\label{se.geninidens}

As a corollary of Proposition~\ref{pr.gaussian}, a convergence result for
a general initial probability law can be proven by using an argument
based on the conditioning by the initial data.
\begin{proposition}\label{pr.gauss}
Let $\psi^J_t$ satisfy the Fokker-Planck
equation~\eqref{eq:FP_lin}, with an initial probability law with
density $\psi_{0}^{J}$ and such that
$\int_{\R^{N}}e^{\alpha\|x\|^{2}}\psi_{0}^{J}~dx< +\infty$ for some
positive $\alpha$\,.
  Assume moreover that $J=S^{-1/2}
\tilde J S^{-1/2}$\,, that $(\tilde J, Q)$ are chosen in ${\mathcal
  P}_{opt}$ and that $t_{0}$ is given by \eqref{eq.t0}.
Then the inequality
\begin{equation}\label{eq:pr.gauss}
\|\psi_t^J - \psi_\infty\|_{L^2(\psi_\infty^{-1})}^{2}  \le N 2^{N+1}
 e^{-2\frac{\Tr
     (S)}{N}(t-t_{\alpha})}\int_{\R^{N}}e^{\alpha\|x\|^{2}}~\psi_{0}^{J}(x)dx\,,
\end{equation}
holds for all $t\geq t_{\alpha}=t_{0}+\frac{N}{2\Tr(S)}|\ln(\frac
\alpha 4)|$\,.
\end{proposition}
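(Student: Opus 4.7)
The plan is to reduce the general initial law to the Gaussian case already treated in Proposition~\ref{pr.gaussian} by conditioning on $X_0$. Since the SDE \eqref{e:nonrev_quad} is linear with additive noise, the law $\psi_t^{J,x}$ of $X_t$ conditional on $X_0 = x$ is Gaussian with mean $x_t$ solution to \eqref{e:ode_symm} starting at $x$ and covariance $\Sigma_t$ solution to \eqref{eq:covar} with $\Sigma_0 = 0$. The full law at time $t$ decomposes as the mixture
\[
\psi_t^J(y) = \int_{\R^N} \psi_t^{J,x}(y)\, \psi_0^J(x)\, dx,
\]
and the equilibrium $\psi_\infty$ admits the same representation. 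Writing $\psi_t^J - \psi_\infty$ as the integral of $\psi_t^{J,x} - \psi_\infty$ against the probability measure $\psi_0^J(x)\,dx$ and applying Jensen's inequality in the Hilbert space $L^2(\psi_\infty^{-1})$ (the function $f\mapsto \|f\|^2$ being convex), I obtain
\[
\|\psi_t^J - \psi_\infty\|_{L^2(\psi_\infty^{-1})}^2 \le \int_{\R^N} \|\psi_t^{J,x} - \psi_\infty\|_{L^2(\psi_\infty^{-1})}^2 \, \psi_0^J(x)\, dx.
\]

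Second, I would apply Proposition~\ref{pr.gaussian} to each $\psi_t^{J,x}$. The covariance vanishes at $t=0$, so $\|S\Sigma_0\|=0$ and the threshold time in \eqref{eq.t0} reduces to the $x$-independent value $t_0 = \frac{N}{2\Tr S}\ln\bigl[4(1+\|S\|)\kappa(Q^{-1}S)\bigr]$. For $t\ge t_0$, Proposition~\ref{pr.gaussian} yields the pointwise bound
\[
\|\psi_t^{J,x} - \psi_\infty\|_{L^2(\psi_\infty^{-1})}^2 \le N 2^N e^{-2\frac{\Tr S}{N}(t-t_0)}\!\left[1 + \|x\|^2 \exp\!\Bigl(2 e^{-2\frac{\Tr S}{N}(t-t_0)} \|x\|^2\Bigr)\right].
\]
The key observation is that, although the bracket contains a potentially huge Gaussian factor in $\|x\|^2$, its growth rate in $\|x\|^2$ decays exponentially with $t$; consequently, by waiting long enough this growth can be dominated by the fixed exponential weight $e^{\alpha\|x\|^2}$ present in the hypothesis.

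Third, the quantitative tuning: choosing $t_\alpha = t_0 + \frac{N}{2\Tr S}|\ln(\alpha/4)|$ gives $2e^{-2\frac{\Tr S}{N}(t-t_0)} \le \alpha/2$ for $t\ge t_\alpha$, so the inner exponential is at most $e^{(\alpha/2)\|x\|^2}$. The elementary inequality $y e^{y/2}\le 2 e^{y}$ applied with $y=\alpha\|x\|^2$ then yields $\|x\|^2 e^{(\alpha/2)\|x\|^2}\le (2/\alpha)e^{\alpha\|x\|^2}$, so that the bracket is dominated by a constant multiple of $e^{\alpha\|x\|^2}$. Meanwhile, the prefactor rewrites as $e^{-2\frac{\Tr S}{N}(t-t_0)} = \tfrac{\alpha}{4}\, e^{-2\frac{\Tr S}{N}(t-t_\alpha)}$, and the factor $\alpha$ exactly cancels the $1/\alpha$ loss, leaving a clean bound of the form $N 2^{N+1} e^{-2\frac{\Tr S}{N}(t-t_\alpha)} e^{\alpha\|x\|^2}$. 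Integrating against $\psi_0^J(x)\,dx$ and using the finite exponential moment hypothesis produces \eqref{eq:pr.gauss}.

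The main obstacle is really bookkeeping rather than conceptual: one must check that the $x$-dependent blow-up in Proposition~\ref{pr.gaussian} is tamed by the hypothesis $\int e^{\alpha\|x\|^2}\psi_0^J\,dx<\infty$, and that the threshold $t_\alpha$ is chosen so that the $\alpha$-dependent constants compensate exactly. The conditioning argument itself is clean because the linear structure of \eqref{e:nonrev_quad} makes the conditional law genuinely Gaussian with deterministic mean and covariance, to which Proposition~\ref{pr.gaussian} applies unchanged.
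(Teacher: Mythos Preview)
Your proposal is correct and follows essentially the same route as the paper: condition on $X_0=x$ to obtain Gaussian conditional laws with $\Sigma_0=0$, apply Jensen's inequality in $L^2(\psi_\infty^{-1})$ to reduce to Proposition~\ref{pr.gaussian}, and then absorb the $x$-dependent blow-up into the exponential moment using the choice of $t_\alpha$. The only minor slip is that your identity $e^{-2\frac{\Tr S}{N}(t-t_0)}=\tfrac{\alpha}{4}e^{-2\frac{\Tr S}{N}(t-t_\alpha)}$ holds only for $\alpha\le 4$ (for $\alpha>4$ the factor is $4/\alpha$); the paper handles this by splitting into the two cases, but either way the resulting constant is bounded by $2$ and the stated bound follows.
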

\begin{proof}
In all the proof, $J=S^{-1/2}
\tilde J S^{-1/2}$ is fixed, with $(\tilde J, Q)$ chosen in ${\mathcal
  P}_{opt}$\,. For $x \in \R^N$ and $t > 0$, let us denote $\phi^x_t$
the density of the
Gaussian process $X_t^x$ solution to:
$$dX_t^x=-(I + J) S X_t^x \, dt + \sqrt{2} dW_t \text{ with }
X_0^x=x\,.$$
Proposition~\ref{pr.gaussian} can be applied with
$\psi_{t}^{J}=\phi_{t}^{x}$ and $\Sigma_{0}=0$\,, so that the time
$t_{0}=\frac{N}{2\Tr
  S}\ln\left[4(1+\|S\|)\kappa(Q^{-1}S)\right]$
is
fixed.
With the decomposition
$$\psi^J_t(y) = \int_{\R^N} \phi^x_t(y) \psi^J_0(x) \, dx,$$
coming from $\phi_{0}^{x}=\delta_{x}$\,, we can write:
\begin{align*}
\| \psi^J_t - \psi_\infty \|_{L^2(\psi_\infty^{-1})}^2
&= \int_{\R^N} \frac{(\psi^J_t)^2(y)}{\psi_\infty(y)} \, dy - 1\\
&= \int_{\R^N} \frac{1}{\psi_\infty(y)} \left( \int_{\R^N} \phi^x_t(y)
  \psi^J_0(x) \, dx \right)^2 \, dy - 1\\
&\le \int_{\R^N} \int_{\R^N} \frac{(\phi^x_t(y))^2}{\psi_\infty(y)} dy \,
  \psi^J_0(x) \, dx - 1\\
&= \int_{\R^N} \int_{\R^N} \left(
  \frac{(\phi^x_t(y))^2}{\psi_\infty(y)} dy -1 \right)\,
  \psi^J_0(x) \, dx\,.
\end{align*}
With Proposition~\ref{pr.gaussian}, we deduce 
\begin{align}
  \nonumber
\| \psi^J_t - \psi_\infty \|_{L^2(\psi_\infty^{-1})}^2
\leq &  N2^{N}e^{-2\frac{\Tr(S)}{N}(t-t_{0})}\\
&\;\times
\left[1+\int_{\R^{N}}\|x\|^{2}\exp\left(2e^{-2\frac{\Tr(S)}{N}(t-t_{0})}\|x\|^{2}\right)\psi_{0}^{J}(x)~dx\right]
\nonumber
\\
\leq& 
N2^{N}e^{-2\frac{\Tr(S)}{N}(t-t_{0})}\left[
1+\frac{1}{\alpha}\int_{\R^{N}}e^{\alpha\|x\|^{2}}\psi_{0}^{J}(x)~dx
\right] \nonumber \\
\leq& 
N2^{N}e^{-2\frac{\Tr(S)}{N}(t-t_{0})}\left(
1+\frac{1}{\alpha} \right)
\int_{\R^{N}}e^{\alpha\|x\|^{2}}\psi_{0}^{J}(x)~dx\, ,\nonumber 
\nonumber
\end{align}
for $t\geq t_{\alpha}=t_{0}+\frac{N}{2\Tr(S)}\left|\ln(\frac \alpha
  4)\right|$. To get the second line, we used (for $t \ge t_\alpha$) $ e^{-2\frac{\Tr(S)}{N}(t-t_{0})}
\le \frac \alpha 2$ and (for $u>0$) $ue^{\frac{\alpha}{2}u}\leq 
\frac{2}{ e\alpha} e^{\alpha u} \leq \frac{1}{ \alpha} e^{\alpha u} $\,.
Writing
$e^{-2\frac{\Tr(S)}{N}(t-t_{0})}=e^{-2\frac{\Tr(S)}{N}(t-t_{\alpha})}e^{-|\ln
  \frac \alpha 4|}$ and discussing the two cases $\alpha \geq 4$ and
$\alpha\leq 4$ yield the result.
\end{proof}
The aim of the analysis using Wick calculus in
Section~\ref{sec:diffusion} 
is to obtain more accurate and uniform in time
estimates.

%
%

\section{Convergence to equilibrium for initial data in
  $L^2(\R^N,\psi_\infty dx)$}
\label{sec:diffusion}
We shall study the spectral properties, and the norm estimates
of the corresponding semigroup, for the generator
$\tilde{\mathcal{L}}_{J}$ defined by~\eqref{eq:deftLJ} (with $y$
replaced by $x$ as a dummy variable):
\begin{equation*}
\tilde{\mathcal{L}}_{J}=
\nabla^{T}S\nabla-\frac{1}{4}x^{T}Sx+\frac{1}{2}\Tr(S)
+\frac{1}{2}(x^{T}\tilde{J}\nabla -\nabla^{T}\tilde{J}x)\,.
\end{equation*}
The operator $\tilde{\mathcal{L}}_{J}$  acts in $L^{2}(\R^{N}, dx;\bC)$ and is unitarily
equivalent  (when $\tilde{J}=S^{1/2}JS^{1/2}$, and after a change of variables, see Section~\ref{sec:resc}) to
$$
\mathcal{L}_{J}=-(B_{J}x)^{T}\nabla +\Delta\quad\text{with}\quad
B_{J}=(I+J)S\,,
\quad J\in \mathcal{A}_{N}\,,
$$
acting on $L^{2}(\R^{N}, \psi_\infty dx;\bC)$\,.
Since for $J\neq 0$\,, the operator $\tilde{\mathcal{L}}_{J}$ (or
$\mathcal{L}_{J}$) is not self-adjoint, it is known
(see~\cite{Trefethen_Embree,DSZ,GGN,HeNi04,HeNi05,HeSj}) that the information about the
spectrum is a first step in estimating the exponential decay of the
semigroup, but that it has to be completed by estimates on the norm of
the resolvent. This will be carried out by using a
weighted $L^{2}$-norm associated with the constructions of the
matrices $Q$ and $J$ introduced in Section~\ref{sec:linalg}.

\subsection{Additional notation and basic properties of the semigroup
  $e^{t \tilde{\mathcal L}_J}$}\label{sec:propsemi}

Let us introduce some additional notation.
\begin{itemize}
\item We choose the right-linear and
left-antilinear convention for $L^{2}$-scalar products (or
$\mathcal{S}-\mathcal{S}'$-duality products):
$$\langle
f,g\rangle_{L^{2}}=\int_{\R^{N}}\overline{f(x)}g(x)~dx\,.
$$
\item For a multi-index $n=(n_{1},\ldots,n_{N})\in \mathbb{N}^{N}$\,, we
  will denote
  $n!=\prod_{j=1}^{N}n_{j}!$, $|n|=\sum_{j=1}^{N}n_{j}$ and when
  $X_{1},\ldots,X_{N}$ belong to a commutative algebra
  $X^{n}=\prod_{j=1}^{N}X_{j}^{n_{j}}$\,.
\item The space of rapidly decaying  complex valued $\mathcal{C}^{\infty}$ functions is
  \begin{multline*}
\mathcal{S}(\R^{N})=\Big\{f\in \mathcal{C}^{\infty}(\R^{N})\,,\,
  \forall\alpha,\beta\in \mathbb{N}^{N}\,, \exists C_{\alpha\beta}\in
  \R_{+}\,,\,
 \\
 \sup_{x\in\R^{N}}|x^{\alpha}\partial_{x}^{\beta}f(x)|\leq C_{\alpha\beta}\Big\}
\end{multline*}
and its dual is denoted $\mathcal{S}'(\R^{N})$\,.
\item The Weyl-quantization $q^{W}(x,D_{x})$ of a symbol
$q(x,\xi)\in \mathcal{S}'(\R^{2N}_{x,\xi})$ is an operator defined by
its Schwartz-kernel
$$
\left[q^{W}(x,D_{x}) \right](x,y)=
\int_{\R^{N}}e^{i(x-y).\xi}q\left(\frac{x+y}{2},\xi\right)~\frac{d\xi}{(2\pi)^{N}}\,.
$$
For example, for $q(x,\xi)=f(x)$, $q^W(x,D_x)$ is the
multiplication by $f(x)$,  for $q(x,\xi)=f(\xi)$, $q^W(x,D_x)$ is the
convolution operator $f(-i \nabla)$\,,
and for $q(x,\xi)=x^T \xi$, $q^W(x,D_x)$ is $\frac{1}{2i} (x^T \nabla +
\nabla^T x)$\,. 
\item 
The Wick-quantization of a
polynomial symbols of the variables $(z,\overline{z})$, where $z\in\bC^{N}$
is an operator defined by replacing $z_{j}$ with the so-called annihilation operator 
$a_{j}=\partial_{x_{j}}+\frac{x_{j}}{2}$ and $\overline{z}_{j}$ with
the so-called
creation operator $a_{j}^{*}=-\partial_{x_{j}}+\frac{x_{j}}{2}$.
Wick's rule implies that for  monomials involving both $z$ and $\overline{z}$, the
annihilation operators are gathered on the right-hand side and the
creation operators on the left-hand side: For given multi-indices
$\alpha,\beta\in \mathbb{N}^{N}$\,, the monomial
$\overline{z}^\alpha z^\beta$ becomes $(a^*)^\alpha
a^\beta$\,.  The properties of the
Wick calculus that we need here are reviewed in
Appendix~\ref{sec:wickcalculus}. We shall also use the vectorial
 notation
$$
a=
\begin{pmatrix}
  a_{1}\\
\vdots\\
a_{N}
\end{pmatrix}\quad,\quad
a^{*}=
\begin{pmatrix}
  a^{*}_{1}\\
\vdots\\
a^{*}_{N}
\end{pmatrix}
$$
with their transpose  $a^{T}$ and $a^{*,T}$\,.
\item The  orthogonal projection from $L^{2}(\R^{N},dx;\bC)$ onto $\bC
  e^{-\frac{|x|^{2}}{4}}$ will be denoted by $\Pi_{0}$\,.
\end{itemize}
Let us now recall a few basic properties of the semigroup
$e^{t\tilde{\mathcal{L}}_{J}}$. The Weyl symbol of
$$
-\tilde{\mathcal{L}}_{J}+\frac{\Tr (S)}{2}=
-\nabla^{T}S\nabla+\frac{1}{4}x^{T}Sx
-
\frac{1}{2}(x^{T}\tilde{J}\nabla -\nabla^{T}\tilde{J}x)
$$
is (using the fact that $\tilde{J}$ is antisymmetric)
\begin{equation}
  \label{eq:WsymbtLJ}
q_{J}(x,\xi)=\xi^{T}S\xi+\frac{x^{T}Sx}{4}
-\frac{i}{2}(x^{T}\tilde{J}\xi - \xi^{T}\tilde{J}x)
=\xi^{T}S\xi+\frac{x^{T}Sx}{4}
-ix^{T}\tilde{J}\xi\,,
\end{equation}
which is a complex quadratic form on $\R^{2N}_{x,\xi}$\,.
Besides, 
the operator $-\tilde{\mathcal{L}}_{J}$ is
the Wick quantization of a quadratic polynomial since
\begin{eqnarray}
\nonumber
-\tilde{\mathcal{L}}_{J}&=&
-\nabla^{T}S\nabla+\frac{1}{4}x^{T}Sx-\frac{1}{2}\Tr(S)-\frac{1}{2}(x^{T}\tilde{J}\nabla
-\nabla^{T}\tilde{J}x)\\
\label{eq:WicktLJ}
&=&a^{*,T}(S-\tilde{J})a\,.
\end{eqnarray}
\begin{proposition}
\label{prop:semigtLJ}
The differential operator $-\tilde{\mathcal{L}}_{J}$ is continuous
from $\mathcal{S}(\R^{N})$ into itself and from $\mathcal{S}'(\R^{N})$
into itself. Its formal adjoint is
$-\tilde{\mathcal{L}}_{-J}$\,.
With the domain $D(-\tilde{\mathcal{L}}_{J})=\left\{u\in
  L^{2}(\R^{N},dx;\bC), -\tilde{\mathcal{L}}_{J}u\in
  L^{2}(\R^{N},dx;\bC)\right\}$\,, the operator $-\tilde{\mathcal{L}}_{J}$ is a
 maximal accretive and sectorial operator in
$L^{2}(\R^{N},dx;\bC)$\,. Its resolvent is compact and its kernel
equals
$\bC e^{-\frac{|x|^{2}}{4}}$\,.
The associated semigroup $(e^{t\tilde{\mathcal{L}}_{J}})_{t\geq 0}$
has the following properties:
\begin{itemize}
\item For any $u\in
  \mathcal{S}(\R^{N})$ (resp. any $u\in \mathcal{S}'(\R^{N})$),
the map  $[0,+\infty)\ni t\mapsto e^{t\tilde{\mathcal{L}}_{J}}u$ is a
$\mathcal{S}(\R^{N})$-valued (resp. $\mathcal{S}'(\R^{N})$-valued) 
$\mathcal{C}^{\infty}$ function.
\item For any $t>0$, the operator $e^{t\tilde{\mathcal{L}}_{J}}$ sends
  continuously $\mathcal{S}'(\R^{N})$ into $\mathcal{S}(\R^{N})$\,.
\item In the orthogonal decomposition
  $L^{2}(\R^{N},dx;\bC)=\bigoplus_{k\in\mathbb{N}}^{\perp}\mathcal{D}_{k}$
  into the finite dimensional vector
  spaces spanned by Hermite functions with degree $k$:
  $$\mathcal{D}_{k}=
\mbox{Span}\left\{(a^{*})^{n}e^{-\frac{|x|^{2}}{4}}\,,~n\in \mathbb{N}^{N},
|n|=k\right\},$$ the semigroup has a block diagonal decomposition
$$
e^{t\tilde{\mathcal{L}}_{J}}=\bigoplus_{k\in\mathbb{N}}^{\perp}e^{t\tilde{\mathcal{L}}_{J}}\big|_{\mathcal{D}_{k}}\,.
$$
\end{itemize}
\end{proposition}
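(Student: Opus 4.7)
The proof rests on the Wick form $-\tilde{\mathcal{L}}_J = a^{*,T}(S-\tilde{J})a$ from~\eqref{eq:WicktLJ}, which reduces almost every assertion to linear algebra on the finite-dimensional Hermite subspaces $\mathcal{D}_k$. I would begin with the operator-theoretic preliminaries. Because $-\tilde{\mathcal{L}}_J$ is a second-order differential operator with polynomial coefficients of degree at most two, it maps $\mathcal{S}(\R^N)$ continuously into itself and, by transposition, also $\mathcal{S}'(\R^N)$ into itself. The formal adjoint can be read off either from the Weyl symbol~\eqref{eq:WsymbtLJ} (one checks $\overline{q_J} = q_{-J}$) or from~\eqref{eq:WicktLJ} using $S^T = S$ and $\tilde{J}^T = -\tilde{J}$: since adjunction sends $a^{*,T}Ma$ to $a^{*,T}M^T a$ by the canonical commutation relations, one gets $(-\tilde{\mathcal{L}}_J)^{*} = a^{*,T}(S+\tilde{J})a = -\tilde{\mathcal{L}}_{-J}$. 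For $u\in\mathcal{S}(\R^N)$, the identity
\[
\langle u,\,-\tilde{\mathcal{L}}_J u\rangle_{L^2} = \sum_{i,j}(S-\tilde{J})_{ij}\,\langle a_i u,\,a_j u\rangle_{L^2}
\]
has real part $\langle au,Sau\rangle \geq \sigma_{\min}(S)\,\|au\|_{L^2}^2$ and imaginary part bounded by $\|\tilde{J}\|\,\|au\|_{L^2}^2$ (since a real antisymmetric matrix has purely imaginary complex numerical range), yielding accretivity and sectoriality with aperture at most $\arctan(\|\tilde{J}\|/\sigma_{\min}(S))$. Testing $\tilde{\mathcal{L}}_J u = 0$ against $u$ forces $au = 0$, i.e.\ $(\partial_{x_j}+x_j/2)u=0$ for every $j$, hence $u\in\bC e^{-|x|^2/4}$.

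The heart of the argument is the Hermite decomposition. Since $a_j$ lowers and $a_j^{*}$ raises the Hermite degree by one, $a^{*,T}(S-\tilde{J})a$ preserves every $\mathcal{D}_k$, acting there as a matrix $M_k$ whose spectrum, by the standard creation-annihilation calculus (cf.\ Proposition~\ref{prop:spectLJ}), consists of all sums $\sum_i n_i \lambda_i$ with $|n|=k$, where $\{\lambda_i\}$ are the eigenvalues of $S-\tilde{J}$; Proposition~\ref{prop:spectrumB}$(i)$ gives $\mathrm{Re}\,\lambda_i>0$ for every $i$, so $\min\mathrm{Re}\,\sigma(M_k) \geq k\,\min_i \mathrm{Re}\,\lambda_i \to \infty$. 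Inverting $\lambda+(-\tilde{\mathcal{L}}_J)$ block by block for $\lambda$ outside the sector and summing yields a bounded resolvent, identifies the maximal domain in the statement with $\{u=\sum_k u_k:\sum_k\|M_k u_k\|^2<\infty\}$, and shows that $\mathcal{S}(\R^N)$ is a core; Lumer--Phillips then delivers maximal accretivity, and compactness of the resolvent follows because each block is finite-dimensional and $\|(\lambda+M_k)^{-1}\|\to 0$ as $k\to\infty$. Sectoriality upgrades the semigroup $(e^{t\tilde{\mathcal{L}}_J})_{t\geq 0}$ to a holomorphic contraction, which automatically restricts to each $\mathcal{D}_k$ and so is block diagonal.

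It remains to check the temporal smoothness and the regularizing effect. Since $\mathcal{S}(\R^N)$ (resp.\ $\mathcal{S}'(\R^N)$) is characterized by faster-than-polynomial decay (resp.\ at most polynomial growth) of Hermite coefficients, it suffices to bound $\|e^{-tM_k}\|_{\mathcal{D}_k\to\mathcal{D}_k}$ adequately in $k$. Jordan normal form on each block yields $\|e^{-tM_k}\|\leq C_k(1+t)^{\dim\mathcal{D}_k-1}e^{-k\alpha_0 t}$ with $\alpha_0=\min\mathrm{Re}\,\sigma(S-\tilde{J})>0$. The main obstacle is keeping the prefactor $C_k$ at most polynomial in $k$, since the blocks $M_k$ may be non-normal. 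I would handle this either by invoking the hypoellipticity of the Weyl quantization of the quadratic symbol $q_J$, whose real part $\xi^T S\xi + x^T Sx/4$ is elliptic on $\R^{2N}\setminus\{0\}$ (in the spirit of the theory of quadratic operators with complex symbols), or more concretely by conjugating each $M_k$ via the fixed change of basis that brings $S-\tilde{J}$ to Jordan form on $\bC^N$ (whose condition number is independent of $k$) and exploiting the combinatorics of tensor products. Either route produces, for every $t>0$, exponential decay in $k$ of the Hermite coefficients of $e^{t\tilde{\mathcal{L}}_J}u$, giving $e^{t\tilde{\mathcal{L}}_J}:\mathcal{S}'(\R^N)\to\mathcal{S}(\R^N)$ continuously; combined with $\partial_t e^{t\tilde{\mathcal{L}}_J}u = \tilde{\mathcal{L}}_J e^{t\tilde{\mathcal{L}}_J}u$ and the continuity of $\tilde{\mathcal{L}}_J$ on $\mathcal{S}$ and $\mathcal{S}'$ from the first stage, this yields the $\mathcal{C}^\infty$-regularity of the orbit maps and completes the proof.
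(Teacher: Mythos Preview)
Your treatment of the preliminaries (continuity on $\mathcal{S}$ and $\mathcal{S}'$, the formal adjoint via $\overline{q_J}=q_{-J}$, accretivity and sectoriality from the Wick form, the kernel) matches the paper. For compactness and maximality you work block by block on the Hermite subspaces, whereas the paper invokes global ellipticity of the Weyl symbol, $|q_J(x,\xi)|\geq C_S(|\xi|^2+|x|^2)$, which directly makes $-\tilde{\mathcal L}_J$ a bijection $\mathcal{H}^k\to\mathcal{H}^{k-2}$ for every $k\in\mathbb{Z}$; this is a cleaner route because it also sets up the regularizing statement: once sectoriality gives boundedness of $(-\tilde{\mathcal L}_J)^m e^{t\tilde{\mathcal L}_J}$ on $L^2$ for every $m$ and every $t>0$ (contour integration), global ellipticity converts this into $e^{t\tilde{\mathcal L}_J}:L^2\to\mathcal{H}^{2m}$ for all $m$, hence into $\mathcal{S}$, and duality with $-\tilde{\mathcal L}_{-J}$ finishes $\mathcal{S}'\to\mathcal{S}$. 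Your route~(a) is essentially this argument stated in different language.

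Your route~(b), however, has a gap. If $P\in GL_N(\bC)$ brings $S-\tilde J$ to Jordan form, the induced conjugation on $\mathcal{D}_k\cong\bigvee^k\bC^N$ is (the symmetric restriction of) $P^{\otimes k}$, whose condition number is $\kappa(P)^k$, not a quantity independent of $k$. The resulting bound $\|e^{-tM_k}\|\leq \kappa(P)^k\,\mathrm{poly}(k)\,e^{-k\alpha_0 t}$ only gives Hermite-coefficient decay when $t>\alpha_0^{-1}\log\kappa(P)$, so the $\mathcal{S}'\to\mathcal{S}$ mapping is not obtained for small $t$ by this route. In fact you are working harder than necessary: the same accretivity estimate you already used, $\mathrm{Re}\,\langle u, M_k u\rangle\geq k\,\sigma_{\min}(S)\,\|u\|^2$ on $\mathcal{D}_k$, yields immediately $\|e^{-tM_k}\|\leq e^{-k\,\sigma_{\min}(S)\,t}$ with \emph{no} prefactor, which is exactly the exponential decay in $k$ you need for $e^{t\tilde{\mathcal L}_J}:\mathcal{S}'\to\mathcal{S}$ at every $t>0$.
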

\begin{proof}
As a differential operator with a polynomial Weyl symbol,
$-\tilde{\mathcal{L}}_{J}$ is continuous from $\mathcal{S}(\R^{N})$
(resp. $\mathcal{S}'(\R^{N})$) into itself.  Its formal adjoints has
the Weyl symbols $\overline{q_{J}(x,\xi)}=q_{-J}(x,\xi)$ and equals 
$-\tilde{\mathcal{L}}_{-J}$.
For $k\in\mathbb{N}$, set 
\begin{multline*}
\mathcal{H}^{k}=\Big\{u\in
  L^{2}(\R^{N},dx;\bC), x^{\alpha}D_{x}^{\beta}u\in 
  L^{2}(\R^{N},dx;\bC), \\
\text{for~all}~
\alpha,\beta \in \mathbb{N}^N~s.t.~|\alpha|+|\beta|\leq k\Big\}
\end{multline*}
 and
let $\mathcal{H}^{-k}$ be its dual space. They satisfy
$$
\mathop{\cap}_{k\in\mathbb{Z}}\mathcal{H}^{k}=\mathcal{S}(\R^{N})
\quad\text{and}\quad
\mathop{\cup}_{k\in\mathbb{Z}}\mathcal{H}^{k}=\mathcal{S}'(\R^{N})\,.
$$
Since $S$ is a real symmetric positive definite matrix, the inequality
$$
|q_{J}(x,\xi)|\geq \xi^{T}S\xi + \frac{x^{T}Sx}{4}  \geq C_{S}(|\xi|^{2}+|x|^{2})
$$
implies that  the operator
$-\tilde{\mathcal{L}}_{J}$ is globally elliptic (see
\cite{Hel,Sjo,Pra1,Pra2}). Therefore, it is a bijection from
$\mathcal{H}^{k}$ onto $\mathcal{H}^{k-2}$ for any $k\in
\mathbb{Z}$. This provides the compactness of the resolvent and the
maximality property. The sectoriality  (see~\cite[Chapter~VIII]{ReSi}) comes
from
\begin{eqnarray*}
  &&\langle u\,,\,
-\tilde{\mathcal{L}}_{J}u\rangle_{L^2}=
\langle u\,,\,
a^{*,T}Sa u\rangle_{L^2} -\langle u\,,\, a^{*,T}\tilde{J}a u\rangle_{L^2}\,,\\
\text{with}
&&
\left|\langle u\,,\, a^{*,T}\tilde{J}a u\rangle_{L^2}\right|\leq
\frac{\|\tilde{J}\|}{\min \sigma(S)}\langle u\,,\, a^{*,T}Sa u\rangle_{L^2}\,.
\end{eqnarray*}
This yields (using the fact that ${\rm Re}(\langle u\,,\,
-\tilde{\mathcal{L}}_{J}u\rangle_{L^2})=
\langle u\,,\,
a^{*,T}Sa u\rangle_{L^2}$ and ${\rm Im}(\langle u\,,\,
-\tilde{\mathcal{L}}_{J}u\rangle_{L^2})=-\langle u\,,\, a^{*,T}\tilde{J}a u\rangle_{L^2}$)
\begin{equation}\label{eq:sectorial}
\forall u\in \mathcal{S}(\R^{N}),\quad \left|\mathrm{arg~}\langle u\,,\,
-\tilde{\mathcal{L}}_{J}u\rangle_{L^2}\right| \leq \theta\,,
\end{equation}
with $0\leq \tan(\theta)\leq \frac{\|\tilde{J}\|}{\min
  \sigma(S)}<+\infty$. Here and in the following, ${\rm arg}(z)$ denotes the argument
of a complex number $z$\,.\\
Then the usual contour integration technique for
sectorial operators (see for example \cite[Theorem~X.52]{ReSi} and its two
corollaries) implies that
$(-\tilde{\mathcal{L}}_{J})^{k}e^{t\tilde{\mathcal{L}}_{J}}$ is
bounded for any $k\in \mathbb{N}$ and any $t>0$\,. Combined with the
global ellipticity of $-\tilde{\mathcal{L}}_{J}$\,, this provides all
our regularity results.

The orthogonal decomposition 
$L^{2}(\R^{N},dx;\bC)=\bigoplus_{k\in\mathbb{N}}^{\perp}\mathcal{D}_{k}$
is actually the spectral decomposition for the harmonic oscillator
Hamiltonian $a^{*T}a$. From the Wick calculus (use either
$[a_{i},a_{j}^{*}]=\delta_{i j}$ or the general
formula recalled in Proposition~\ref{prop:Wickcalcul}-3 in
the appendix), we deduce 
$$
\left[a^{*,T}a\,,\, a^{*,T}(S-\tilde{J})a\right]=a^{*,T}\left[I\,,\, (S-\tilde{J})\right]a=0.
$$
This implies that the spectral subspaces $\mathcal{D}_{k}$,
$k\in\mathbb{N}$, are indeed invariant by the semigroup $e^{t\tilde{\mathcal{L}}_{J}}$.
\end{proof}

Note that with the last property, the question of estimating the
convergence to equilibrium stated in Theorem~\ref{th:CV_KB} is
equivalent to estimating the decay of the semigroup
$$
e^{t\tilde{\mathcal{L}}_{J}}(I-\Pi_{0})\quad\text{or}\quad e^{t\tilde{\mathcal{L}}_{J}}\big|_{\mathcal{D}_{0}^{\perp}}
$$
where $\Pi_{0}$ is the orthogonal projection onto $\bC
e^{-\frac{|x|^{2}}{4}}=\mathcal{D}_{0}$ (see also~\eqref{eq:CV_KB'}).

\subsection{Spectrum of $\tilde{\mathcal{L}}_{J}$}
\label{sec:spectLJ}
The result of this section is a direct application of the general
results of \cite{HiPr,Pra1,Pra2} developed after~\cite{Sjo,Hor}. See
also~\cite{ottobre_pavliotis_pravda-starov,ottobre_pavliotis_pravda-starov_preprint} where these general results are used in order to compute
the spectrum of the generator of a linear SDE with, possibly
degenerate diffusion matrix. This result was first obtained in~\cite{Metafune_al2002}
using different techniques.
\begin{proposition}
\label{prop:spectLJ}
The spectrum of the operator
$-\tilde{\mathcal L}_{J}$ equals
$$
\sigma(-\tilde{\mathcal L}_{J})=\left\{\sum_{\lambda\in
    \sigma(\tilde B_{J})}k_{\lambda}\lambda\,, k_{\lambda}\in \mathbb{N}\right\}\,,
$$
and its kernel is $\bC e^{-\frac{|x|^{2}}{4}}$\,.
\end{proposition}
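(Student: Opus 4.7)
\medskip

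\noindent\textbf{Proof plan.} The key tool is the Wick-ordered expression
$-\tilde{\mathcal{L}}_{J}=a^{*,T}(S-\tilde{J})a$ established in~\eqref{eq:WicktLJ}, together with the block decomposition
$L^{2}(\R^{N},dx;\bC)=\bigoplus_{k\in\mathbb{N}}^{\perp}\mathcal{D}_{k}$ proved in Proposition~\ref{prop:semigtLJ}. Since $-\tilde{\mathcal{L}}_{J}$ preserves each finite-dimensional $\mathcal{D}_{k}$ and has compact resolvent, its spectrum is purely discrete and equals the union of the spectra of the restrictions to the $\mathcal{D}_{k}$. My plan is to compute each of these finite-dimensional spectra by a suitable change of creation/annihilation operators that puts the matrix $M:=S-\tilde{J}$ in upper triangular form.

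First I would fix $P\in GL_{N}(\bC)$ such that $P^{-1}MP=T$ is upper triangular with the eigenvalues $\mu_{1},\dots,\mu_{N}$ of $M$ on the diagonal (a Schur decomposition suffices). Since $M=\tilde{B}_{J}^{T}$, one has $\sigma(M)=\sigma(\tilde{B}_{J})$ (the spectrum of a real matrix is closed under conjugation). I would then introduce
\[
b:=P^{-1}a\,,\qquad c^{*}:=P^{T}a^{*}\,,
\]
and use the canonical commutation $[a_{k},a_{\ell}^{*}]=\delta_{k\ell}$ (Proposition~\ref{prop:Wickcalcul}-3 in the appendix) to verify that $[b_{i},c_{j}^{*}]=\delta_{ij}$ while $[b_{i},b_{j}]=[c_{i}^{*},c_{j}^{*}]=0$. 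Note that $b_{i}$ still annihilates the vacuum $\Omega:=e^{-|x|^{2}/4}$ because each $b_{i}$ is a linear combination of the $a_{k}$. A direct substitution gives
\[
-\tilde{\mathcal{L}}_{J}=a^{*,T}M a=c^{*,T}T b=\sum_{i=1}^{N}\mu_{i}\,c_{i}^{*}b_{i}+\sum_{i<j}T_{ij}\,c_{i}^{*}b_{j}\,.
\]

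The family $\{(c^{*})^{n}\Omega\,:\,n\in\mathbb{N}^{N},\ |n|=k\}$ is a basis of $\mathcal{D}_{k}$, because the $c_{i}^{*}$ are linear combinations of the $a_{i}^{*}$ with invertible coefficient matrix. Using $[b_{j},(c_{i}^{*})^{n_{i}}]=n_{i}\delta_{ij}(c_{i}^{*})^{n_{i}-1}$, the diagonal term acts as the number operator $c_{i}^{*}b_{i}(c^{*})^{n}\Omega=n_{i}(c^{*})^{n}\Omega$, while each off-diagonal term $c_{i}^{*}b_{j}$ with $i<j$ sends $(c^{*})^{n}\Omega$ to a multiple of $(c^{*})^{n-e_{j}+e_{i}}\Omega$, i.e.\ it shifts one quantum from mode $j$ to a strictly lower mode $i$. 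If I order the multi-indices of length $k$ lexicographically, this makes the matrix of $-\tilde{\mathcal{L}}_{J}\big|_{\mathcal{D}_{k}}$ triangular, with diagonal entries exactly $\sum_{i=1}^{N}\mu_{i}n_{i}$ for $|n|=k$. Hence
\[
\sigma\bigl(-\tilde{\mathcal{L}}_{J}\big|_{\mathcal{D}_{k}}\bigr)=\Bigl\{\sum_{i=1}^{N}\mu_{i}n_{i}\,:\,n\in\mathbb{N}^{N},\ |n|=k\Bigr\}\,.
\]
Taking the union over $k\in\mathbb{N}$ and re-indexing by the distinct elements of $\sigma(\tilde{B}_{J})=\{\mu_{1},\dots,\mu_{N}\}$ yields the claimed formula. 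Finally, for the kernel: only $n=0$ produces the eigenvalue $0$ because Proposition~\ref{prop:spectrumB}(i) guarantees $\mathrm{Re}\,\mu_{i}>0$ for every $i$, so any non-trivial sum $\sum\mu_{i}n_{i}$ with $|n|\geq 1$ has strictly positive real part. Hence $\ker(-\tilde{\mathcal{L}}_{J})=\mathcal{D}_{0}=\bC e^{-|x|^{2}/4}$.

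The main obstacle I anticipate is the book-keeping in the change of operators, especially verifying cleanly that $P$ complex does not spoil the commutation relations and that the resulting triangular structure on each $\mathcal{D}_{k}$ is genuine (i.e.\ that the ordering of multi-indices really makes the perturbation strictly triangular). Once these are in place the argument is essentially algebraic; the compactness of the resolvent from Proposition~\ref{prop:semigtLJ} ensures no extra spectrum can arise outside the list of eigenvalues produced above.
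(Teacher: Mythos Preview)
Your argument is correct and complete. The block decomposition from Proposition~\ref{prop:semigtLJ} together with compactness of the resolvent does reduce the spectral computation to finite dimensions, and your non-unitary change of ladder operators $b=P^{-1}a$, $c^{*}=P^{T}a^{*}$ yields the right commutation relations $[b_{i},c_{j}^{*}]=\delta_{ij}$ regardless of whether $P$ is unitary; the triangularity of $-\tilde{\mathcal{L}}_{J}\big|_{\mathcal{D}_{k}}$ in the basis $(c^{*})^{n}\Omega$ ordered lexicographically is exactly as you describe.

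Your approach is genuinely different from the paper's. The paper invokes the general spectral theorem of Hitrik--Pravda-Starov for elliptic quadratic Weyl operators: it writes $-\tilde{\mathcal{L}}_{J}+\tfrac{\Tr(S)}{2}=q_{J}^{W}(x,D_{x})$, computes the Hamilton map $F$ associated with the quadratic form $q_{J}$, and then reduces $\det(F-\lambda I)$ by row and column operations to $2^{-2N}\det(S+\tilde{J}-2i\lambda I)\det(S+\tilde{J}+2i\lambda I)$, so that the eigenvalues of $F$ with non-negative imaginary part are $\tfrac{i}{2}\sigma(\tilde{B}_{J})$. This black-box route situates the result inside the broader framework of complex quadratic operators and would generalize more readily (e.g.\ to the Kramers--Fokker--Planck setting mentioned in the paper). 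Your route, by contrast, is entirely self-contained: it uses only the Wick form $a^{*,T}(S-\tilde{J})a$ already established in~\eqref{eq:WicktLJ}, standard Schur triangularization, and elementary commutator algebra, and it gives the eigenvectors (generalized Hermite-type functions $(c^{*})^{n}\Omega$) almost for free. It is more elementary and arguably more transparent here, at the cost of being tailored to operators of the specific second-quantized form $d\Gamma(M)$.
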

\begin{proof}
The spectrum of the operator $q_{J}^{W}(x,D_{x})=-\tilde{\mathcal L}_{J}+\frac{\Tr(S)}{2}$
associated with the elliptic quadratic Weyl symbol $q_{J}(x,\xi)$
defined by~\eqref{eq:WsymbtLJ} equals, according to
\cite{HiPr}-Th~1.2.2,
$$
\sigma(q_{J}^{W}(x,D_{x}))=\left\{ \sum_{\tiny
    \begin{array}[c]{c}
      \lambda\in \sigma(F)\\
   {\rm Im}\lambda \geq 0 
    \end{array}
} -i\lambda  (r_{\lambda}+2k_{\lambda})\,, k_{\lambda}\in \mathbb{N}\right\}\,,
$$
where $F$ is the so-called Hamilton map associated with $q_{J}$, and
$r_{\lambda}$ is the algebraic multiplicity of 
$\lambda\in \sigma(F)$\,, i.e. the dimension of the characteristic space.
The Hamilton map is the $\bC$-linear map $F:\bC^{2N}\to \bC^{2N}$
associated with the matrix
$$
F=
\begin{bmatrix}
  0&I\\
-I&0
\end{bmatrix}
M_{q_{J}}\,,
$$
where 
$$
M_{q_{J}}=
\begin{bmatrix}
  \frac{S}{4}& -\frac{i}{2}\tilde J\\
\frac{i}{2}\tilde{J}& S
\end{bmatrix}~\in \mathcal{M}_{2N}(\mathbb{C})
$$
is the matrix of the $\bC$-bilinear form associated with $q_{J}$\,.
The matrix $F$ is similar to $\tilde{F}$ defined by
$$
\tilde{F}=
\begin{bmatrix}
  \frac{1}{\sqrt{2}}&0\\
 0 & \sqrt{2}
\end{bmatrix}
F
\begin{bmatrix}
  \sqrt{2}&0\\
 0 & \frac{1}{\sqrt{2}}
\end{bmatrix}
=\frac{1}{2}
\begin{bmatrix}
  i\tilde{J}&S\\
-S & i\tilde{J}
\end{bmatrix}\,.
$$
Thus, the characteristic polynomial of $F$ can be computed by
\begin{align*}
  \det(F-\lambda I)&=\det(\tilde{F}-\lambda I)=
 2^{-2N}\left|
  \begin{array}[c]{cc}
      i\tilde{J}-2\lambda I&S\\
-S&i\tilde{J}-2\lambda I
    \end{array}
\right|
\\
&=
2^{-2N}\left|
\begin{array}[c]{cc}
      i\tilde{J}-2\lambda I&S\\
-S-\tilde{J}-i2\lambda I&i(\tilde{J}+S+i2\lambda I)
    \end{array}
\right|\\
&=
2^{-2N}\left|
\begin{array}[c]{cc}
      i(\tilde{J}-S+i2\lambda I)&S\\
0&i(\tilde{J}+S+i2\lambda I)
    \end{array}
\right|\\
&=
2^{-2N}\det(S-\tilde{J}-i2\lambda I)\det(S+\tilde{J}+i2\lambda I)\\
&=2^{-2N}\det(S+\tilde{J}-i2\lambda I)\det(S+\tilde{J}+i2\lambda I)\,,
\end{align*}
where we used $\det(M)=\det(M^{T})$ for $M=S-\tilde{J}-i2\lambda I$ in the last
line.  Using the fact that ${\rm
  Re}(\sigma (\tilde B_J)) \ge 0$, we thus obtain that $\sigma(F)\cap \left\{\lambda, \, {\rm Im} \lambda \geq
  0\right\}$ equals
$\frac{i}{2}\sigma(S+\tilde{J})=\frac{i}{2}\sigma(\tilde B_{J})$\,.
 In particular one gets,
$$
\sum_{
    \tiny\begin{array}[c]{c}
      \lambda\in \sigma(F)\\
   {\rm Im}\lambda \geq 0 
    \end{array}
} -i\lambda 2 k_\lambda= \sum_{
      \mu \in \sigma(\tilde{B}_J)}
 k_{i \mu / 2} \mu
$$
and
$$
\sum_{
    \tiny\begin{array}[c]{c}
      \lambda\in \sigma(F)\\
   {\rm Im}\lambda \geq 0 
    \end{array}
} -i\lambda r_{\lambda}=\frac{1}{2}\Tr (\tilde B_{J})=\frac{\Tr(S)}{2}\,.
$$
This concludes the proof.
\end{proof}
The Gearhart-Pr{\"u}ss theorem (see \cite{HeSj,EnNa,Trefethen_Embree}) provides the
following corollary.
\begin{corollary}
When the pair $(\tilde{J},Q)$ belongs to $\mathcal{P}_{opt}$, the
spectrum of $-\tilde{\mathcal{L}}_{J}$ is contained in
$$
\left\{0\right\}\cup\left\{z\in \bC, {\rm Re}z\geq \frac{\Tr(S)}{N}\right\}
$$
and
$$
\lim_{t\to\infty}\ln \left\|e^{t\tilde{\mathcal{L}}_{J}}(I-\Pi_{0}) \right\|_{\mathcal{L}(L^2)}=-\frac{\Tr(S)}{N},
$$
where, we recall,
$\left\|e^{t\tilde{\mathcal{L}}_{J}}(I-\Pi_{0}) \right\|_{\mathcal{L}(L^2)}=\sup_{
  u \in {\mathcal D}_0^\perp} \frac{\left\|
    e^{t\tilde{\mathcal{L}}_{J}} u \right\|_{L^2}}{\|u\|_{L^2}}$\,.
\end{corollary}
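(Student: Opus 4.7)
The plan is to combine the explicit spectral description from Proposition~\ref{prop:spectLJ} with the characterization of $\mathcal{P}_{opt}$ from Proposition~\ref{prop:maximize}, and then invoke the Gearhart-Prüss theorem to convert the spectral information into a growth bound.

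First, I would translate the hypothesis $(\tilde{J},Q)\in \mathcal{P}_{opt}$ into spectral information on $\tilde{B}_{J}$. By construction (Definition~\ref{def:Popt}) this pair satisfies \eqref{e:J1} with $Q\in \mathcal{S}_N^{>0}(\R)$, so Proposition~\ref{prop:maximize} gives
$$
\sigma(\tilde{B}_{J})\subset \frac{\Tr(S)}{N} + i\R\,.
$$
In particular, every $\lambda\in \sigma(\tilde{B}_{J})$ satisfies $\mathrm{Re}(\lambda)=\Tr(S)/N$.

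Second, I would plug this into Proposition~\ref{prop:spectLJ}, which says
$$
\sigma(-\tilde{\mathcal{L}}_{J})=\left\{\sum_{\lambda\in \sigma(\tilde{B}_{J})}k_{\lambda}\lambda\,,~k_{\lambda}\in \mathbb{N}\right\}\,.
$$
Taking real parts of a generic element yields $\left(\sum_{\lambda}k_{\lambda}\right)\frac{\Tr(S)}{N}$, which is either $0$ (in which case the element itself is $0$ and we recover the kernel direction $\mathbb{C} e^{-|x|^2/4}$) or at least $\Tr(S)/N$. This is exactly the claimed spectral inclusion.

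Third, for the decay statement, I would restrict attention to the invariant subspace $\mathcal{D}_0^{\perp}=\bigoplus_{k\geq 1}^{\perp}\mathcal{D}_{k}$ from the block decomposition in Proposition~\ref{prop:semigtLJ}. On this subspace the spectrum of $\tilde{\mathcal{L}}_J$ lies in $\{z\in \bC,~\mathrm{Re}\,z\leq -\Tr(S)/N\}$, with equality achieved (take any single $\lambda\in \sigma(\tilde{B}_J)$ with $k_\lambda=1$). Since $-\tilde{\mathcal{L}}_J$ is sectorial with compact resolvent (Proposition~\ref{prop:semigtLJ}), the semigroup $e^{t\tilde{\mathcal{L}}_J}$ is holomorphic on a Hilbert space, so the Gearhart-Prüss theorem (equivalently, the identity between the spectral bound and the growth bound for holomorphic semigroups on Hilbert spaces) gives
$$
\lim_{t\to\infty}\frac{1}{t}\ln \left\|e^{t\tilde{\mathcal{L}}_{J}}(I-\Pi_{0})\right\|_{\mathcal{L}(L^2)}=s\bigl(\tilde{\mathcal{L}}_{J}|_{\mathcal{D}_0^{\perp}}\bigr)=-\frac{\Tr(S)}{N}\,,
$$
which is the desired limit.

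There is essentially no technical obstacle here: everything has been prepared by the earlier statements. The only point requiring a little care is the legitimacy of applying Gearhart-Prüss on the reducing subspace $\mathcal{D}_0^{\perp}$; this is justified by the orthogonal block-diagonal decomposition in Proposition~\ref{prop:semigtLJ}, which guarantees that the restricted semigroup is again a strongly continuous holomorphic semigroup on the Hilbert space $\mathcal{D}_0^{\perp}$ with generator the restriction of $\tilde{\mathcal{L}}_J$.
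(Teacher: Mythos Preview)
Your proposal is correct and follows exactly the approach the paper intends: it combines the spectral description of Proposition~\ref{prop:spectLJ} with the constraint $\sigma(\tilde{B}_J)\subset \Tr(S)/N+i\R$ coming from $(\tilde{J},Q)\in\mathcal{P}_{opt}$, and then invokes Gearhart--Pr\"uss on the reducing subspace $\mathcal{D}_0^\perp$. Note also that your inclusion of the factor $1/t$ in the growth-bound limit is the correct formulation; the displayed statement in the paper omits it, which is a typographical slip.
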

The above logarithmic convergence is weaker than an estimate
$\|e^{t\tilde{\mathcal{L}}_{J}}\|\leq Ce^{-\frac{\Tr(S)}{N}}$ with a
good control of the constant $C$. Obtaining such a control is not an
easy task for general semigroups with non self-adjoint generators (see
\cite{HeSj,HeNi04,HeNi05,GGN}). This is the subject of the next
section.

\subsection{Convergence to equilibrium for $e^{t\tilde{\mathcal L}_{J}}$}
\label{sec:returnLJ}
Consider a pair $(\tilde{J},Q)\in \mathcal{P}_{opt}$ according to
Definition~\ref{def:Popt}. We recall that $(\tilde{J},Q)\in
\mathcal{P}_{opt}$ satisfies~\eqref{e:J1}.
With the matrix $Q$, we associate the operator
\begin{equation}\label{eq:CQ}
C_{Q}=a^{*,T}Qa\,,
\end{equation}
with which a natural functional space  will be introduced  in order to study
the norm of $e^{t\tilde{\mathcal L}_{J}}$\,. The
operator $C_Q$ is the Wick-quantization of the polynomial
$\overline{z}^T Q z$.

This operator $C_{Q}$ has the
following properties: 
\begin{itemize}
\item It is continuous from $\mathcal{S}(\R^{N})$ into itself and from
  $\mathcal{S}'(\R^{N})$ into itself.
\item It is globally elliptic (see \cite{Hel,Pra1}) and it has a
  compact resolvent.
\item It is a non negative self-adjoint operator in $L^{2}(\R^{N},dx; \bC)$
  with the domain $D(C_{Q})=\left\{u\in L^{2}(\R^{N},dx; \bC),
  C_{Q}u\in L^{2}(\R^{N},dx; \bC)\right\}$\,.
\item Its kernel is $\bC e^{-\frac{|x|^{2}}{4}}$\,.
\item It is block diagonal in the decomposition
  $L^{2}(\R^{N},dx;\bC)=\bigoplus_{k\in\mathbb{N}}^{\perp}\mathcal{D}_{k}$:
  \begin{equation}
    \label{eq.orthoCQ}
\forall t\in\R\,,\quad e^{itC_{Q}}=
\bigoplus_{k\in\mathbb{N}}^{\perp}e^{it C_{Q}}
\big|_{\mathcal{D}_{k}}\,.
\end{equation}
\end{itemize}
One defines the two Hilbert spaces: 
\begin{itemize}
\item $\mathcal{H}^{1}_{Q}=\left\{u\in L^{2}(\R^{N},dx; \bC), \langle u\,,\,
    C_{Q}u\rangle_{L^2} < +\infty\right\}$\,, naturally endowed with the scalar
  product
$$
\langle u\,,\, v\rangle_{\mathcal{H}^{1}_{Q}}= \langle u\,,\,
v\rangle_{L^2}+\langle u\,,\,C_{Q}v\rangle_{L^2}\,;
$$
\item $\dot{\mathcal{H}}^{1}_{Q}=\mathcal{H}^{1}_{Q}\cap
  \mathcal{D}_{0}^{\perp}$ (where, we recall, $\mathcal{D}_{0}= \bC e^{-\frac{|x|^{2}}{4}}$) endowed with the scalar product
$$
\langle u\,,\, v\rangle_{\dot{\mathcal{H}}^{1}_{Q}}= \langle u\,,\,C_{Q}v\rangle_{L^2}\,.
$$
\end{itemize}
\begin{proposition}
\label{prop:decHQ}
Assume that the pair $(\tilde{J},Q)$ belongs to $\mathcal{P}_{opt}$\,.
 Then the semigroup $(e^{t\tilde{\mathcal{L}}_{J}})_{t\geq 0}$ is a
 contraction semigroup on $\mathcal{H}_{Q}^{1}$ satisfying the
 following estimate:
\begin{equation}\label{eq:decHQ}
\forall t\geq 0\,,\quad \left\|e^{t\tilde{\mathcal{L}}_{J}}(I-\Pi_{0})
  \right\|_{\mathcal{L}(\dot{\mathcal{H}}^{1}_{Q})}
\leq e^{-\frac{\Tr(S)}{N}t}\,,
\end{equation}
where
$\left\|e^{t\tilde{\mathcal{L}}_{J}}(I-\Pi_{0}) \right\|_{\mathcal{L}(\dot{\mathcal{H}}^{1}_{Q})}=\sup_{
  u \in \dot{\mathcal{H}}^{1}_{Q}} \frac{\left\| e^{t\tilde{\mathcal{L}}_{J}} u \right\|_{\dot{\mathcal{H}}^{1}_{Q}}}{\|u\|_{\dot{\mathcal{H}}^{1}_{Q}}}$\,.
\end{proposition}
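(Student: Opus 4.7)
The strategy is to establish a differential inequality for $\|u_t\|_{\dot{\mathcal H}^1_Q}^2 = \langle u_t, C_Q u_t\rangle_{L^2}$, where $u_t = e^{t\tilde{\mathcal L}_J} u_0$. Thanks to Proposition~\ref{prop:semigtLJ} and the block-diagonal decomposition $L^2 = \oplus_k \mathcal{D}_k$, I may work first with $u_0 \in \mathcal S(\R^N) \cap \mathcal D_0^\perp$ and extend by density. Differentiating yields
\[
\frac{d}{dt}\langle u_t, C_Q u_t\rangle_{L^2} = \langle u_t, (\tilde{\mathcal L}_J^* C_Q + C_Q \tilde{\mathcal L}_J)u_t\rangle_{L^2},
\]
so it suffices to show the quadratic-form inequality $\tilde{\mathcal L}_J^* C_Q + C_Q \tilde{\mathcal L}_J \leq -\tfrac{2\Tr(S)}{N} C_Q$ on $\mathcal S(\R^N)$.

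Using $\tilde{\mathcal L}_J = -a^{*,T}(S-\tilde J)a$ and $\tilde{\mathcal L}_J^* = -a^{*,T}(S+\tilde J)a$ from Proposition~\ref{prop:semigtLJ}, I split
\[
\tilde{\mathcal L}_J^* C_Q + C_Q \tilde{\mathcal L}_J = -\{a^{*,T}Sa, C_Q\} + [C_Q, a^{*,T}\tilde J a].
\]
The commutator identity $[a^{*,T}Ma, a^{*,T}Na] = a^{*,T}[M,N]a$ from Wick calculus (Proposition~\ref{prop:Wickcalcul}) yields $[C_Q, a^{*,T}\tilde J a] = a^{*,T}[Q,\tilde J]a$, and the composition rule $(a^{*,T}Ma)(a^{*,T}Na) = a^{*,T}(MN)a + \sum_{i,j,k,l} M_{ij} N_{kl}\, a^*_i a^*_k a_j a_l$ gives
\[
\{a^{*,T}Sa, C_Q\} = a^{*,T}(QS + SQ)a + 2\mathcal R, \qquad \mathcal R := \sum_{i,j,k,l} Q_{ij} S_{kl}\, a^*_i a^*_k a_j a_l.
\]
Combining and invoking the defining identity~\eqref{e:J1}, i.e. $[Q,\tilde J] - (QS+SQ) = -\tfrac{2\Tr(S)}{N}Q$, the quadratic piece collapses to $-\tfrac{2\Tr(S)}{N} C_Q$, leaving
\[
\tilde{\mathcal L}_J^* C_Q + C_Q \tilde{\mathcal L}_J = -\tfrac{2\Tr(S)}{N} C_Q - 2\mathcal R.
\]

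The decisive step is to prove $\langle u, \mathcal R u\rangle_{L^2} \geq 0$. Introducing the $\bC$-valued matrix $W(x) = (a_i a_k u(x))_{i,k}$, which is symmetric because the annihilation operators commute, one computes pointwise
\[
\sum_{i,j,k,l} Q_{ij} S_{kl}\, \overline{W_{ki}(x)} W_{jl}(x) = \|Q^{1/2} W(x) S^{1/2}\|_F^2 \geq 0,
\]
where the equality uses the factorizations $Q = Q^{1/2}Q^{1/2}$, $S = S^{1/2}S^{1/2}$ with real positive square roots together with the identity $W(x)^* = \overline{W(x)}$ coming from symmetry. Integrating over $x$ gives $\mathcal R \geq 0$, hence $\tfrac{d}{dt}\langle u_t, C_Q u_t\rangle \leq -\tfrac{2\Tr(S)}{N}\langle u_t, C_Q u_t\rangle$ and Gronwall yields~\eqref{eq:decHQ}. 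The contraction of $e^{t\tilde{\mathcal L}_J}$ on $\mathcal H^1_Q$ follows by combining this bound with the $L^2$-contractivity coming from ${\rm Re}\langle u, \tilde{\mathcal L}_J u\rangle = -\langle u, a^{*,T}Sa\, u\rangle \leq 0$: both summands in $\|u_t\|_{\mathcal H^1_Q}^2 = \|u_t\|_{L^2}^2 + \langle u_t, C_Q u_t\rangle$ are non-increasing.

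The main obstacle is the positivity of the quartic Wick-ordered remainder $\mathcal R$: the quadratic contribution collapses cleanly thanks to the structural equation~\eqref{e:J1} satisfied by every $(\tilde J, Q) \in \mathcal{P}_{opt}$, but recognizing the quartic piece as a genuinely non-negative operator requires the Frobenius-square identification above, which crucially relies on the symmetry of the auxiliary matrix $W$ coming from $[a_i, a_k] = 0$; without this symmetry the $\tilde J$-contribution (which otherwise would spoil positivity) fails to cancel against its transpose.
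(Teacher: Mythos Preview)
Your proof is correct and follows essentially the same approach as the paper: both reduce to the quadratic-form inequality $\tilde{\mathcal L}_J^* C_Q + C_Q \tilde{\mathcal L}_J \leq -\tfrac{2\Tr(S)}{N} C_Q$, split the left-hand side into a quadratic piece that collapses via~\eqref{e:J1} and a quartic Wick-ordered remainder, and then show the remainder is nonnegative. The only difference is packaging: the paper encapsulates the computation as Lemma~\ref{lem:example}, invoking the Wick product formula~\eqref{eq:Wickprod} and the abstract positivity criterion of Proposition~\ref{prop:Wickcalcul}(1) (the quartic term has symbol $(z^{\otimes 2}, (S\otimes Q + Q\otimes S)z^{\otimes 2})_{\bC}$ with $S\otimes Q + Q\otimes S\geq 0$), whereas you carry out the same calculation by hand with $a,a^*$ and recognize the quartic piece as $\int \|Q^{1/2}W(x)S^{1/2}\|_F^2\,dx$; these are two presentations of the same argument.
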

\begin{proof}
The operator $e^{t\tilde{\mathcal{L}}_{j}}$ is block diagonal (see
Proposition~\ref{prop:semigtLJ})
 in the
decomposition
$\mathcal{D}_{0}\mathop{\oplus}^{\perp}\mathcal{D}_{0}^{\perp}=\bigoplus_{k\in\mathbb{N}}^{\perp}\mathcal{D}_{k}$
which is an orthogonal decomposition
 in $L^{2}(\R^{N},dx;\bC)$ and also
in $\mathcal{H}^{1}_{Q}$ owing to \eqref{eq.orthoCQ}. With
$e^{t \tilde{\mathcal{L}}_{J}}e^{-\frac{|x|^{2}}{4}}=e^{-\frac{|x|^{2}}{4}}$\,, 
the semigroup property on $\mathcal{H}^{1}$ is thus a consequence
of the estimate~\eqref{eq:decHQ} in $\dot{\mathcal{H}}^{1}_{Q}$\,.\\
Using the relation~\eqref{e:J1} together with the
inequality~\eqref{eq:ineqexample} of Lemma~\ref{lem:example} in the
Appendix, we have: for all $u\in
\mathcal{D}=\bC[x_{1},\ldots,x_{N}]e^{-\frac{|x|^{2}}{4}}\cap \mathcal{D}_{0}^{\perp}$,
$$
\left\langle u\,,\,
\left(-\tilde{\mathcal{L}}_{J}^{*}C_{Q}-C_{Q}\tilde{\mathcal{L}}_{J}\right)
u \right\rangle_{L^2}
\geq \frac{2\Tr(S)}{N}\langle u\,,\,C_{Q}u \rangle_{L^2}.
$$
 Since
the semigroup $(e^{t\tilde{\mathcal{L}}_{J}})_{t\geq 0}$ is a strongly
$\mathcal{C}^{1}$ semigroup on 
$\mathcal{S}(\R^{N})$ and leaves
$\mathcal{D}\subset\mathcal{S}(\R^{N})$ invariant, 
we can compute for any  $u\in \mathcal{D}$\,,
\begin{eqnarray*}
  \frac{d}{dt} \left\langle e^{t\tilde{\mathcal{L}}_{J}}u\,,\,
  C_{Q}e^{t\tilde{\mathcal{L}}_{J}}u \right\rangle_{L^2}
&=& \left\langle e^{t\tilde{\mathcal{L}}_{J}}u\,,\,
   \left(\tilde{\mathcal{L}}_{J}^{*}C_{Q}+C_{Q}\tilde{\mathcal{L}}_{J}\right)
     e^{t\tilde{\mathcal{L}}_{J}}u \right\rangle_{L^2}
\\
&\leq & -\frac{2\Tr(S)}{N} \left\langle e^{t\tilde{\mathcal{L}}_{J}}u\,,\,
 C_{Q}e^{t\tilde{\mathcal{L}}_{J}}u \right\rangle_{L^2}.
\end{eqnarray*}
The proof is then completed using the density of $\mathcal{D}$ in $\dot{\mathcal{H}}^{1}_{Q}$\,.
\end{proof}
We are now in position to state the main result of this section.
\begin{proposition}
\label{prop:return1}
Assume that the pair $(\tilde{J},Q)$ belongs to $\mathcal{P}_{opt}$\,. Then the semigroup
 $(e^{t\tilde{\mathcal{L}}_{J}})_{t\geq 0}$ satisfies:
 \begin{multline*}
  \forall t\geq 0\,,\quad
\left\|e^{t\tilde{\mathcal{L}}_{J}}(I-\Pi_{0})
\right\|_{\mathcal{L}(L^2)} \leq 2^{5}N\kappa(Q)^{1/2}
\\\times\left(\frac{\max\sigma(Q)}{\min_{\lambda,\lambda'\in
    \sigma(Q)\,,\,\lambda\neq
    \lambda'}|\lambda-\lambda'|}\right)^{2}\kappa(S)^{7/2}
e^{-\frac{\Tr(S)}{N}t}\,.
\end{multline*}
\end{proposition}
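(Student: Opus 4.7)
The plan is to exploit the observation that~\eqref{eq:WicktLJ} presents $-\tilde{\mathcal{L}}_J=a^{*T}(S-\tilde J)a$ as the additive second quantization $d\Gamma(S-\tilde J)$ of the finite-dimensional matrix $S-\tilde J$. Because the copies of $S-\tilde J$ inside $d\Gamma(S-\tilde J)$ act on distinct tensor factors and hence commute, the identity $\exp(t\,d\Gamma(A))=\Gamma(e^{tA})$ holds for any matrix $A$ (both sides satisfy the same semigroup ODE with the same initial data). Under the standard isometric identification of $\mathcal D_k$ with the symmetric tensor power $(\mathbb C^N)^{\odot k}$ equipped with its Fock inner product---the Wick-calculus normalization $\|(a^*)^n\Omega\|_{L^2}^2=n!\|\Omega\|_{L^2}^2$ from Appendix~\ref{sec:wickcalculus} matches the permanent formula for the Fock norm---this yields the block-by-block description
$$
e^{t\tilde{\mathcal{L}}_J}\big|_{\mathcal D_k}\;\simeq\;\bigl(e^{-t(S-\tilde J)}\bigr)^{\otimes k}\big|_{(\mathbb C^N)^{\odot k}}.
$$

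Combining this with the elementary submultiplicativity $\|\Gamma(A)|_{(\mathbb C^N)^{\odot k}}\|\leq\|A\|^k$ and with Proposition~\ref{prop:conv_ode} in its $(-\tilde J,Q^{-1})\in\mathcal P_{opt}$ form (guaranteed by Remark~\ref{rem:signJ}, which gives $\|e^{-t(S-\tilde J)}\|\leq \kappa(Q)^{1/2}e^{-\Tr(S)t/N}$) produces, for every $k\geq 1$,
$$
\|e^{t\tilde{\mathcal{L}}_J}|_{\mathcal D_k}\|_{\mathcal L(L^2)}\leq \bigl(\kappa(Q)^{1/2}e^{-\Tr(S)t/N}\bigr)^k.
$$
The block-diagonal structure gives $\|e^{t\tilde{\mathcal{L}}_J}(I-\Pi_0)\|_{\mathcal L(L^2)}=\sup_{k\geq 1}\|e^{t\tilde{\mathcal{L}}_J}|_{\mathcal D_k}\|$; a dichotomy on whether $\kappa(Q)^{1/2}e^{-\Tr(S)t/N}\leq 1$, combined with the $L^2$-contraction of $e^{t\tilde{\mathcal{L}}_J}$ (from the accretivity ${\rm Re}\,\langle u,-\tilde{\mathcal{L}}_J u\rangle=\langle u,a^{*T}Sa\,u\rangle\geq 0$) in the small-$t$ regime, then yields the uniform estimate
$$
\|e^{t\tilde{\mathcal{L}}_J}(I-\Pi_0)\|_{\mathcal L(L^2)}\leq \kappa(Q)^{1/2}\,e^{-\Tr(S)t/N}\quad\text{for all }t\geq 0,
$$
which implies the stated inequality \emph{a fortiori}.

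The main technical point is justifying the Fock-space identification and the submultiplicativity bound $\|\Gamma(A)|_{\mathcal D_k}\|_{L^2}\leq\|A\|^k$; these are standard bosonic second-quantization facts whose ingredients (commutation relations, Wick normal ordering) appear in Appendix~\ref{sec:wickcalculus}. An alternative that avoids second-quantization machinery but still improves on the stated bound is a direct short-time Lyapunov smoothing estimate: for $u_0\in\mathcal D_0^\perp$ and $u(t)=e^{t\tilde{\mathcal{L}}_J}u_0$, the functional $\Phi(t)=t\,\langle u(t),C_Q u(t)\rangle_{L^2}+\beta\,\|u(t)\|_{L^2}^2$ with $\beta=\max\sigma(Q)/(2\min\sigma(S))$ is non-increasing---using Proposition~\ref{prop:decHQ} to discard the cross-term and the operator inequality $C_Q\leq(\max\sigma(Q)/\min\sigma(S))\,a^{*T}Sa$ to absorb the $L^2$-dissipation---which gives $\|e^{t_1\tilde{\mathcal{L}}_J}u_0\|_{\dot{\mathcal H}^1_Q}^2\leq \frac{\max\sigma(Q)}{2t_1\min\sigma(S)}\,\|u_0\|_{L^2}^2$; composition with the $\dot{\mathcal H}^1_Q$-decay of Proposition~\ref{prop:decHQ} for $t\geq t_1$ and with the Poincaré inequality $\min\sigma(Q)\|v\|_{L^2}^2\leq \|v\|_{\dot{\mathcal H}^1_Q}^2$ on $\mathcal D_0^\perp$ (valid since $C_Q\geq\min\sigma(Q)\,a^{*T}a\geq\min\sigma(Q)\,I$ there), optimized at $t_1\sim N/\Tr(S)$, yields an $L^2$-decay estimate of order $\kappa(Q)^{1/2}\kappa(S)^{1/2}\,e^{-\Tr(S)t/N}$.
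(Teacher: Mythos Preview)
Your main argument is correct and takes a genuinely different route from the paper's proof. You exploit the exact Fock-space structure: since $-\tilde{\mathcal L}_J=a^{*T}(S-\tilde J)a=d\Gamma(S-\tilde J)$, the semigroup acts on each sector $\mathcal D_k\simeq\bigvee^k\bC^N$ as the tensor power $(e^{-t(S-\tilde J)})^{\otimes k}$, so the whole infinite-dimensional estimate reduces to the matrix bound of Proposition~\ref{prop:conv_ode} (applied to $(-\tilde J,Q^{-1})\in\mathcal P_{opt}$ via Remark~\ref{rem:signJ}). Your dichotomy between the large-$t$ regime (where the $k=1$ sector dominates) and the small-$t$ regime (handled by $L^2$-contractivity) is clean and correct, and it delivers the sharper constant $\kappa(Q)^{1/2}$, with no dependence on $N$, on $\kappa(S)$, or on the eigenvalue gaps of $Q$.

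The paper, by contrast, does not use the exponential identity $e^{-t\,d\Gamma(M)}=\Gamma(e^{-tM})$. Instead it first proves decay in the weighted $\dot{\mathcal H}^1_Q$-norm (Proposition~\ref{prop:decHQ}) and then transfers this to $L^2$ through a chain of operator comparisons $C_Q\leftrightarrow a^{*T}Sa\leftrightarrow -\tilde{\mathcal L}_J$, combined with the sectoriality smoothing bound of Lemma~\ref{lem:sector} for $\|t'\tilde{\mathcal L}_Je^{t'\tilde{\mathcal L}_J}\|$ and an estimate of $\|\tilde J\|$ in terms of the spectrum of $Q$. Each of these steps costs a factor, which is why the paper ends up with $2^5 N\kappa(Q)^{1/2}(\max\sigma(Q)/\min|\lambda-\lambda'|)^2\kappa(S)^{7/2}$. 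Your approach is both simpler and quantitatively stronger in this quadratic setting; the paper's argument has the advantage of being closer in spirit to techniques (hypocoercivity via modified norms, sectoriality) that generalize beyond the exactly solvable second-quantized case. Your sketched alternative via the Lyapunov functional $\Phi(t)=t\langle u,C_Qu\rangle+\beta\|u\|^2$ is also correct and sits between the two: it is essentially a streamlined version of the paper's strategy, avoiding Lemma~\ref{lem:sector} and recovering a constant of order $\kappa(Q)^{1/2}\kappa(S)^{1/2}$.
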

\begin{proof}
  From the inequalities on real symmetric matrices$\min\sigma(Q)
  \,I\leq Q\leq \max \sigma(Q) \,I$ and $\min \sigma(S) \, I\leq S\leq \max \sigma(S) \,I$,
we deduce with the help of Proposition~\ref{prop:Wickcalcul}-1  the
following inequalities on self-adjoint operators
\begin{eqnarray*}
  && \min \sigma(Q)a^{*,T}a\leq C_{Q}\leq \max\sigma(Q)a^{*,T}a\,,\\
&&\min \sigma(S)a^{*,T}a\leq
-\frac{\tilde{\mathcal{L}}_{J}+\tilde{\mathcal{L}}^{*}_{J}}{2}
\leq
\max\sigma(S)a^{*,T}a\,,
\\
\text{and}&&
-\frac{\min\sigma(Q)}{\max\sigma(S)}\frac{\tilde{\mathcal{L}}_{J}+\tilde{\mathcal{L}}^{*}_{J}}{2}
\leq C_{Q}\leq -\frac{\max\sigma(Q)}{\min \sigma(S)}
\frac{\tilde{\mathcal{L}}_{J}+\tilde{\mathcal{L}}^{*}_{J}}{2}\,.
\end{eqnarray*}
Here, we have used the fact that
$-\frac{\tilde{\mathcal{L}}_{J}+\tilde{\mathcal{L}}^{*}_{J}}{2}$ is
the Wick quantization of $( z, S z )_{\bC}$ (see
Proposition~\ref{prop:Wickcalcul} and Lemma~\ref{lem:example} below).
Hence, using Proposition~\ref{prop:decHQ}, the following
inequalities hold: for any $u\in
\dot{\mathcal{H}}^{1}_{Q}$ and any $t\ge t_{0}>0$,
\begin{eqnarray*}
 && \left\langle e^{t\tilde{\mathcal{L}}_{J}}u\,,\,
  -\frac{\tilde{\mathcal{L}}_{J}+\tilde{\mathcal{L}}_{J}^{*}}{2} 
e^{t\tilde{\mathcal{L}}_{J}}u \right\rangle_{L^2}
\leq  
\frac{\max \sigma(S)}{\min
  \sigma(Q)} \left\|e^{t\tilde{\mathcal{L}}_{J}}u \right\|_{\dot{\mathcal{H}}^{1}_{Q}}^{2}
\\
&&\qquad\leq  \frac{\max \sigma(S)}{\min
  \sigma(Q)}e^{-2\frac{\Tr(S)}{N}(t-t_{0})}
\left\|e^{t_{0}\tilde{\mathcal{L}}_{J}}u \right\|_{\dot{\mathcal{H}}^{1}_{Q}}^{2}
\\
&&\qquad\leq 
\kappa(Q)\kappa(S)e^{-2\frac{\Tr(S)}{N}t}e^{2\frac{\Tr(S)}{N}t_{0}}
\left\langle e^{t_{0}\tilde{\mathcal{L}}_{J}}u\,,\,
  -\frac{\tilde{\mathcal{L}}_{J}+\tilde{\mathcal{L}}_{J}^{*}}{2} 
e^{t_{0}\tilde{\mathcal{L}}_{J}}u\right\rangle_{L^2}
\\
&&
\qquad 
\leq
\kappa(Q)\kappa(S)e^{-2\frac{\Tr(S)}{N}t}e^{2\frac{\Tr(S)}{N}t_{0}}
\left\|e^{t_{0}\tilde{\mathcal{L}}_{J}}u\right\|_{L^2}
\left\|\tilde{\mathcal{L}}_{J}e^{t_{0}\tilde{\mathcal{L}}_{J}}u \right\|_{L^2}.
\end{eqnarray*}
Using the inequalities
 \begin{equation*}
\forall v\in \mathcal{D}_{0}^{\perp},\quad
   \min\sigma(S)\|v\|_{L^2}^{2}\leq \left\langle v\,,\,
  -\frac{\tilde{\mathcal{L}}_{J}+\tilde{\mathcal{L}}_{J}^{*}}{2}
  v \right\rangle_{L^2}
\leq \|v\|_{L^2} \left\|\tilde{\mathcal{L}}_{J}v \right\|_{L^2},
 \end{equation*}
with $v=e^{t\tilde{\mathcal{L}}_{J}}u$ and 
$v=e^{t_{0}\tilde{\mathcal{L}}_{J}}u$, we deduce
\begin{equation*}
  \|e^{t\tilde{\mathcal{L}}_{J}}u\|_{L^2}^{2}\leq \kappa(Q)\kappa(S) 
e^{-2\frac{\Tr(S)}{N}t}\frac{e^{2\frac{\Tr(S)}{N}t_{0}}}{t_{0}^{2}\min
\sigma(S)^{2}}
\left\|t_{0}\tilde{\mathcal{L}}_{J}e^{t_{0}\tilde{\mathcal{L}}_{J}}u \right\|_{L^2}^{2}.
\end{equation*}
By taking $t_{0}=\frac{N}{\Tr(S)}\geq \frac{1}{\max \sigma(S)}$, we
obtain, for all $u\in \dot{\mathcal{H}}^{1}_{Q}$\,,
\begin{equation}\label{eq:11}
 \|e^{t\tilde{\mathcal{L}}_{J}}u\|_{L^2}^{2}\leq \kappa(Q)\kappa(S)^{3}
e^{-2\frac{\Tr(S)}{N}t} e^2
\sup_{t'>0}
\left\|t'\tilde{\mathcal{L}}_{J}e^{t'\tilde{\mathcal{L}}_{J}}\right\|_{\mathcal{L}(L^2)}^{2}
\|u\|_{L^2}^{2}\,.
\end{equation}
The Lemma~\ref{lem:sector} below provides the bound
\begin{equation}\label{eq:tLetL}
\sup_{t'>0}
\left\|t'\tilde{\mathcal{L}}_{J}e^{t'\tilde{\mathcal{L}}_{J}} \right\|_{\mathcal{L}(L^2)}^{2}
\leq \frac{1}{\pi^{2}\sin^{4}\alpha}
\end{equation}
with $\alpha \in (0,\pi/4)$ defined by
$$\tan\left(\frac{\pi}{2}-2\alpha\right)=\sup_{u \in D(\tilde{\mathcal L}_J)}\frac{|{\rm Im}~\langle u\,,\,
  \tilde{\mathcal{L}}_{J}u\rangle_{L^2}|}{|{\rm Re}~\langle u\,,\,
  \tilde{\mathcal{L}}_{J}u\rangle_{L^2}|} \leq \frac{\|\tilde{J}\|}{\min
  \sigma(S)}\,.
$$
The last inequality was proven in~\eqref{eq:sectorial} above. We thus
obtain
$$
\frac{1}{\sin \alpha}\leq
\frac{2\cos\alpha}{\cos(2\alpha)}\frac{\|\tilde{J}\|}{\min \sigma(S)}\,.
$$
In view of~\eqref{eq:tLetL}, one can assume that $\alpha \in (0,\pi/8)$ (up to
changing $\alpha$ by $\min(\alpha, \pi/8)$) so that
\begin{equation}\label{eq:22}
\frac{1}{\sin \alpha}\leq 2\sqrt{2}\frac{\|\tilde{J}\|}{\min \sigma(S)}\,.
\end{equation}
When $(\tilde{J},Q)\in \mathcal{P}_{opt}$ (see
Definition~\ref{def:Popt}), the relation~\eqref{e:Jsoln}
provides an expression of the linear mapping associated with
$\tilde{J}$ in the orthonormal basis $(\psi_{k})_{1\leq k\leq N}$\,.
In this basis, the Frobenius norm can be computed and we get
\begin{align}
\| \tilde J \|^2
& \le \| \tilde J \|^2_F 
\nonumber
\\
& \le 2 \left(\frac{\max \sigma(Q)}{\min_{\lambda,\lambda'\in
    \sigma(Q)\,,\,\lambda\neq \lambda'}|\lambda-\lambda'|} \right)^2
\| S \|^2_F 
\nonumber
\\
&\leq  2 \left(\frac{\max \sigma(Q)}{\min_{\lambda,\lambda'\in
    \sigma(Q)\,,\,\lambda\neq \lambda'}|\lambda-\lambda'|} \right)^2
N\max(\sigma(S))^{2}\,,
\label{eq:33}
\end{align}
By gathering~\eqref{eq:11}--\eqref{eq:tLetL}--\eqref{eq:22}--\eqref{eq:33},
we finally obtain the expected upper bound when $t \ge t_{0}$\,:
\begin{align*}
\|e^{t\tilde{\mathcal{L}}_{J}}u\|_{L^2}^{2}&\leq 2^{10}N^{2}
\kappa(Q)\left(\frac{\max\sigma(Q)}{\min_{\lambda,\lambda'\in
    \sigma(Q)\,,\,\lambda\neq
    \lambda'}|\lambda-\lambda'|}\right)^{4}
\\
&\quad \times\kappa(S)^{7}
e^{-2\frac{\Tr(S)}{N}t}
\|u\|_{L^2}^{2}\,,
\end{align*}
 for all $u\in \dot{\mathcal{H}}^{1}_{Q}$ and by density for all $u\in
 \mathcal{D}_{0}^{\perp}$\,.
When $t\leq t_{0}=\frac{N}{\Tr S}$\,, simply use
\begin{eqnarray*}
&&\|e^{t\tilde{\mathcal{L}}_{J}}(I-\Pi_{0})\|_{\mathcal{L}(L^{2})}
\leq 1
\\
&&\quad\leq
2^{5}N
\kappa(Q)^{\frac 1 2}\left(\frac{\max\sigma(Q)}{\min_{\lambda,\lambda'\in
    \sigma(Q)\,,\,\lambda\neq
    \lambda'}|\lambda-\lambda'|}\right)^{2}
\kappa(S)^{\frac 7 2}\times e^{-1}\,.
\end{eqnarray*}
\end{proof}

\begin{remark}\label{rem:bornJ}
A lower bound can be given for $\|\tilde{J}\|$ with
\begin{align*}
\| \tilde J \|^2
& \ge \frac{1}{N} \| \tilde J \|^2_F \\
& \ge \frac2N \left(\frac{\min \sigma(Q)}{\max_{\lambda,\lambda'\in
    \sigma(Q)\,,\,\lambda\neq \lambda'}|\lambda-\lambda'|} \right)^2
\| S \|^2_F \\
&=  \frac2N \left(\frac{\min \sigma(Q)}{\max_{\lambda,\lambda'\in
    \sigma(Q)\,,\,\lambda\neq \lambda'}|\lambda-\lambda'|} \right)^2
\Tr (S^2).
\end{align*}
Thus, we have
\begin{equation}\label{eq:born_inf_J}
\|\tilde{J}\|\geq \sqrt{2}  \frac{\min\sigma(Q)\min\sigma(S)}{\max_{\lambda,\lambda'\in
    \sigma(Q)\,,\,\lambda\neq \lambda'}|\lambda-\lambda'|}\,.
\end{equation}
\end{remark}
\begin{lemma}
\label{lem:sector}
  Let $(L,D(L))$ be a maximal accretive and sectorial operator  in a Hilbert
  space $\mathcal{H}$ with
$$
\forall u\in D(L)\,,\quad
\left|{\rm arg}\langle u\,,\, Lu\rangle_{\mathcal{H}}\right|\leq \theta =\frac{\pi}{2}-2\alpha
\quad\text{with}~\alpha>0\,,
$$
where, we recall, ${\rm arg}(z)$ denotes the argument
of a complex number $z$\,.
Then, the associated semigroup satisfies
$$
\forall t\geq 0\,,\quad
\left\|tLe^{-tL}\right\|_{\mathcal{L}(\mathcal{H})}\leq
\frac{1}{\pi\sin^{2}\alpha}\,.
$$
\end{lemma}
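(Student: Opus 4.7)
The plan is to use the standard Dunford functional calculus / Cauchy integral representation for the holomorphic semigroup generated by a maximal accretive sectorial operator, picking the integration contour to optimally exploit the sectoriality assumption.

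First, I would recall that the hypothesis ``$L$ maximal accretive and sectorial with $|\arg\langle u,Lu\rangle_{\mathcal H}|\le \theta = \pi/2-2\alpha$'' means that the numerical range $W(L)=\{\langle u,Lu\rangle_{\mathcal H}\,:\,\|u\|=1\}$ is contained in the closed sector $\Sigma_\theta = \{z\in\bC\,:\,|\arg z|\le \theta\}\cup\{0\}$. Combined with maximality, this gives $\sigma(L)\subset\Sigma_\theta$ and the classical resolvent estimate
\begin{equation*}
\forall z\notin \Sigma_\theta,\qquad \|(z-L)^{-1}\|_{\mathcal L(\mathcal H)} \le \frac{1}{d(z,\Sigma_\theta)}.
\end{equation*}
In particular, $-L$ generates a bounded holomorphic semigroup $(e^{-tL})_{t\ge 0}$.

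Next, I would introduce the contour $\Gamma$ given by the two rays $z=re^{\pm i\varphi}$, $r\ge 0$, oriented so as to run around $\Sigma_\theta$ counterclockwise, with the choice
\begin{equation*}
\varphi = \frac{\pi}{2}-\alpha,
\end{equation*}
so that $\Gamma$ sits strictly between $\Sigma_\theta$ and the imaginary axis. The angular gap between $\Gamma$ and $\Sigma_\theta$ is exactly $\alpha$, hence $d(z,\Sigma_\theta)=|z|\sin\alpha$ for $z\in\Gamma$, and the resolvent bound above yields $\|(z-L)^{-1}\|\le 1/(|z|\sin\alpha)$ on $\Gamma$. Moreover, since $\cos\varphi=\sin\alpha>0$, we have $|e^{-tz}|=e^{-tr\sin\alpha}$ on $\Gamma$, which ensures integrability.

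The heart of the argument is the representation
\begin{equation*}
tL\,e^{-tL} = \frac{t}{2\pi i}\int_\Gamma z\,e^{-tz}(z-L)^{-1}\,dz,
\end{equation*}
obtained by differentiating the Cauchy formula for $e^{-tL}$. Parameterizing the two rays and combining the resolvent bound with the exponential decay gives
\begin{equation*}
\|tL\,e^{-tL}\|_{\mathcal L(\mathcal H)} \le \frac{t}{\pi}\int_0^{+\infty} r\,e^{-tr\sin\alpha}\cdot\frac{1}{r\sin\alpha}\,dr = \frac{t}{\pi\sin\alpha}\cdot\frac{1}{t\sin\alpha}=\frac{1}{\pi\sin^2\alpha},
\end{equation*}
which is exactly the claimed bound (the case $t=0$ being trivial). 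The main subtlety, rather than an obstacle, is the justification of the Cauchy representation for an unbounded maximal accretive sectorial $L$: this follows from the standard theory (see, e.g., the contour integration argument in \cite[Thm.~X.52 and its corollaries]{ReSi}) once one knows the resolvent estimate above, since the integral converges absolutely in operator norm thanks to the factor $e^{-tr\sin\alpha}$ and defines the analytic extension of the semigroup to the sector $\{|\arg t|<\alpha\}$, which agrees with $e^{-tL}$ for $t>0$ by uniqueness of analytic continuation.
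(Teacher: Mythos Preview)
Your proof is correct and essentially identical to the paper's: both use the Dunford contour representation $tLe^{-tL}=\frac{1}{2\pi i}\int_\Gamma tz\,e^{-tz}(z-L)^{-1}\,dz$ along the two rays of argument $\pm(\tfrac{\pi}{2}-\alpha)$, combine the sectorial resolvent bound with the exponential decay, and evaluate the resulting one-dimensional integral. The only cosmetic difference is that the paper parametrizes $\Gamma$ by $x=\mathrm{Re}\,z$ (so that $\|(z-L)^{-1}\|\le 1/x$ and $|e^{-tz}|=e^{-tx}$) while you parametrize by $r=|z|$; and the paper isolates the case $0\in\sigma(L)$ explicitly via the approximation $L\to\varepsilon+L$, whereas you fold this into the appeal to \cite[Thm.~X.52]{ReSi}.
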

\begin{proof}
The case $t=0$ is obvious.\\
For $t>0$\,, $e^{-tL}$ sends $\mathcal{H}$ into $D(L)$ so that
$tLe^{-tL}$ belongs to $\mathcal{L}(\mathcal{H})$\,.
Consider first the case when $0\not\in\sigma(L)$\,.
Our assumptions
with $\alpha>0$\,, ensure that the operator $tLe^{-tL}$ is given by
the convergent contour integral 
$$
tLe^{-tL}=\frac{1}{2i\pi}\int_{\Gamma}tze^{-tz}(z-L)^{-1}~dz\,,
$$
where $\Gamma$ is the union of the two half lines with arguments
$\frac{\pi}{2}-\alpha$ and $-\frac{\pi}{2}+\alpha$\,. For
$z=x\pm i\frac{x}{\tan\alpha}\in \Gamma$ with $x>0$ the resolvent 
$(z-L)^{-1}$ satisfies (see for example \cite[Chapter~VIII.17]{ReSi})
$\|(z-L)^{-1}\|_{\mathcal{L}(\mathcal H)}\leq \frac{1}{x}$\,. Moreover,
$|dz|=\sqrt{1+\frac{1}{\tan^{2} \alpha}} dx=\frac{dx}{\sin
  \alpha}$  and $|e^{-tz}|=e^{-tx}$\,.
From these estimates, we deduce
\begin{align*}
\left\|tLe^{-tL}\right\|_{\mathcal H}\leq \frac{2}{2\pi}\int_{0}^{+\infty}\frac{tx}{\sin
  \alpha}e^{-tx}\frac{1}{x}\frac{dx}{\sin \alpha}
&=\frac{1}{\pi\sin^{2}
  \alpha}
\int_{0}^{\infty} t e^{-tx} \, dx
\\
&=\frac{1}{\pi\sin^{2}
  \alpha}\,.
\end{align*}
When $0\in \sigma(L)$ it suffices to replace $L$ by $\varepsilon+L$
which satisfies the same assumptions as $L$ with the same $\alpha$
with $0\not\in\sigma(\varepsilon+L)$\,. The identity
$$
t(\varepsilon+L)e^{-t(\varepsilon+L)}-Le^{-tL}
=t\varepsilon e^{-\varepsilon t}e^{-tL}+(e^{-\varepsilon t}-1)tLe^{-tL}
$$
with $t>0$ fixed and $e^{-tL}\,,\,
tLe^{-tL}\in\mathcal{L}(\mathcal{H})$ implies 
$\lim_{\varepsilon\to
  0}\|t(\varepsilon+L)e^{-t(\varepsilon+L)}-tLe^{-tL}\|_{\mathcal{L}(\mathcal{H})}=0$\,,
which yields the result in the general case.
\end{proof}

In view of~\eqref{eq:CV_KB'}, Proposition~\ref{prop:return1} yields
the estimate~\eqref{eq:CV_KB} in
Theorem~\ref{th:CV_KB} with a constant
\begin{equation}\label{eq:CN_PDE}
C_N^{(2)}=2^{5}N\kappa(Q)^{1/2}\left(\frac{\max\sigma(Q)}{\min_{\lambda,\lambda'\in
    \sigma(Q)\,,\,\lambda\neq
    \lambda'}|\lambda-\lambda'|}\right)^{2}.
\end{equation}
To conclude, let us comment on the way of $C_N$ behaves.
\begin{remark}\label{rem:CN_PDE}
In view of the upper bound~\eqref{eq:CN_PDE}, using the same
construction as in
 Remark~\ref{rem:CN_ODE},  we again
notice that it is possible to have $C_N^{(2)}=\mathcal{O}(N^{3})$ while keeping
a reasonable perturbation $\tilde J$ (with a Frobenius norm estimated
by $\|\tilde{J}\|_{F}\leq 4N\|S\|_{F}$). Contrary to the case of
the ordinary differential equation discussed in Remark~\ref{rem:CN_ODE} our estimate does not
provide a uniform in~$N$ constant.
\end{remark}

%
%


%
%

\section{Numerical Experiments}
\label{sec:num}

The algorithm for obtaining the optimal non-reversible is presented as a pseudo-code in Figure~\ref{fig:algor}.

In this section we present some numerical experiments, based on the algorithm presented in Figure~\ref{fig:algor}. The numerical computations presented in this section are based on the following steps:

\begin{enumerate}

\item Calculate the orthonormal basis $\{\psi_{k}\}_{k=1}^{N}$ using the algorithm presented in Figure~\ref{fig:algor}.

\item Choose the eigenvalues of the matrix $Q$, $\{\lambda_{k} \}_{k=1}^{N}$, e.g. according to Remark~\ref{rem:CN_ODE}.

\item Calculate the optimal perturbation $J$ using~\eqref{e:Jsoln} and the formula $J = S^{-1/2} \tilde{J} S^{-1/2}$.

\item Calculate the optimally perturbed matrix $B_{J} = (I + J) S$.

\item Calculate the matrix exponentials $e^{-tB}$ and $e^{-t B_{J}}$ and their norms.

\end{enumerate}

\begin{figure}[!ht]
\begin{center}
\includegraphics[width=1.1\textwidth]{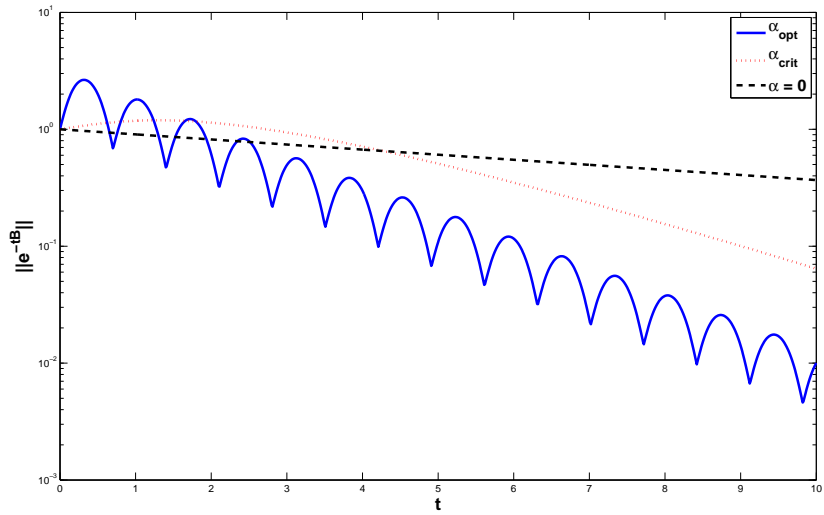}
\end{center}
\caption{Norms of the matrix exponentials for the $2 \times 2$ diagonal matrix~\eqref{e:2dexamp} and optimal nonreversible perturbations.} \label{fig:matrixepon2d}
\end{figure}

In Figure~\ref{fig:matrixepon2d} we present the results for a two dimensional problem, for which all results can be performed analytically, see Section~\ref{sec:2D}. We consider the case where the matrix $B$ has a spectral gap, 
\begin{equation}\label{e:2dexamp}
S = \mbox{diag}(1,0.1).
\end{equation} 
In the figure we plot the norms of the matrix exponentials for the symmetric case, an optimal perturbation and the critical value, see Equation~\eqref{eq:cond_a}.

\begin{figure}[!ht]
\begin{center}
\includegraphics[width=1.1\textwidth]{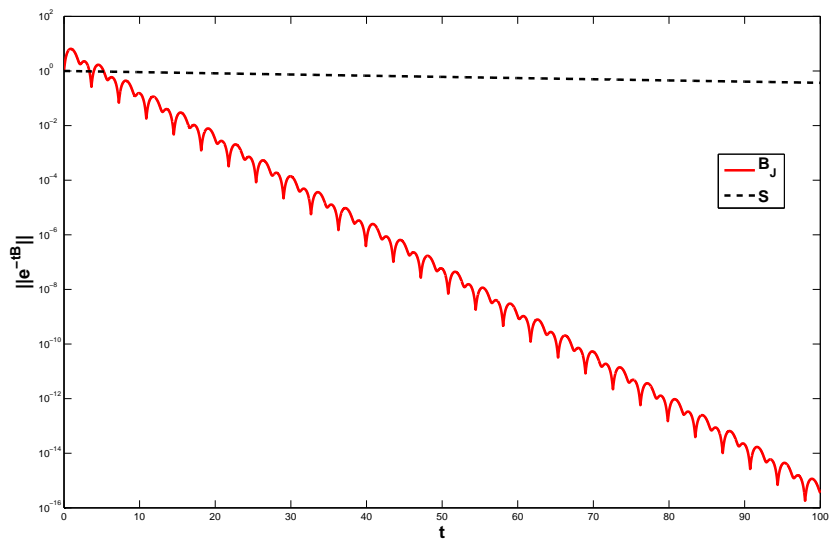}
\end{center}
\caption{Norms of the matrix exponentials for the $3 \times 3$ diagonal matrix~\eqref{e:3dexamp} and its optimal nonreversible perturbation.} \label{fig:matrixexpon3d}
\end{figure}

In Figure~\ref{fig:matrixexpon3d} we present results for a three dimensional problem with the symmetric matrix
\begin{equation}\label{e:3dexamp}
S = \mbox{diag}(1,\, 0.1,\, 0.01).
\end{equation}
The spectral gap of the optimally perturbed nonreversible matrix (and of the generator of the semigroup) is given by
$$
\frac{\Tr{S}}{3} = 0.37,
$$
which is a substantial improvement over that of $S$, namely $0.01$.

\begin{figure}[!ht]
\begin{center}
\includegraphics[width=1.1\textwidth]{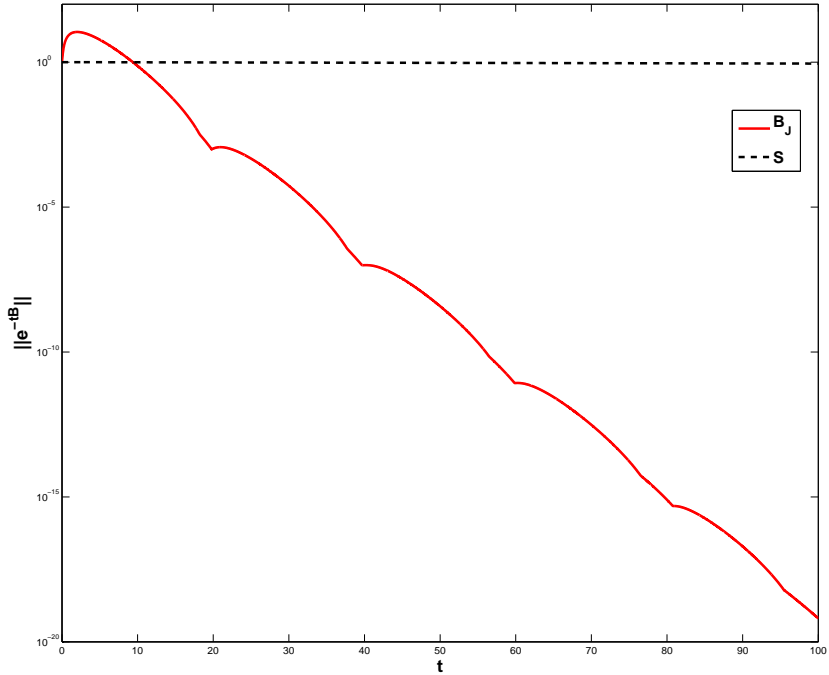}
\end{center}
\caption{Norms of the matrix exponentials for a diagonal matrix with random uniformly distributed entries and its optimal nonreversible perturbation for $N=100$.} \label{fig:matrixexponrand100}
\end{figure}

In Figure~\ref{fig:matrixexponrand100} we consider a $100 \times 100$ diagonal matrix with random entries, uniformly distributed on $[0,1]$. For our example the minimum diagonal element (spectral gap) is $0.0012$. On the contrary, the spectral gap of $B_{J}$ with $J = J_{opt}$ is $0.4762$.

\begin{figure}[!ht]
\begin{center}
\includegraphics[width=1.1\textwidth]{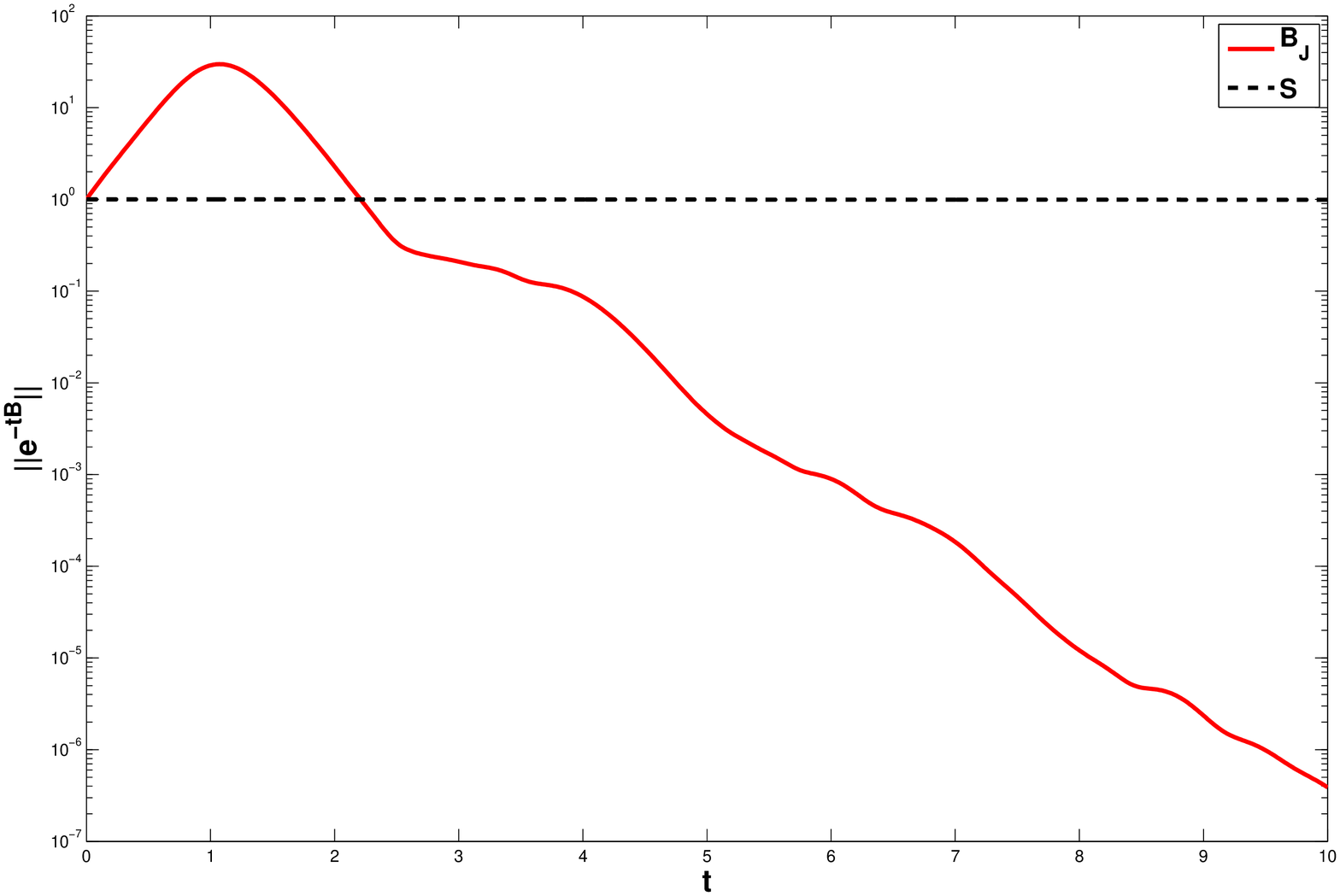}
\end{center}
\caption{Norms of the matrix exponentials for the the discrete Laplacian and its optimal nonreversible perturbation for $N=100$.} \label{fig:matrixexpondiscrlapl100}
\end{figure}

Finally, in Figure~\ref{fig:matrixexpondiscrlapl100} we consider a
drift that is a (high dimensional) finite difference approximation of
of the Laplacian with periodic boundary conditions. More precisely, consider the drift matrix
$$
B_{ii} = 2, \quad B_{i,i+1} = B_{i-1,i} = -1,
$$ 
with $N=100$. In this case the improvement on the convergence rate is over three orders of magnitude, since 
$$
\min(\sigma(B)) = 9.67 \times 10^{-4}, \quad \mbox{whereas} \quad \mbox{Re}(\sigma({B_{J}})) = \frac{\Tr{S}}{100} = 2.
$$
Since the computational cost of calculating the optimal nonreversible perturbation is very low, we believe that the algorithm developed in this paper can be used for sampling Gaussian distributions in infinite dimensions.

The algorithm developed in this paper provides us with the optimal
nonreversible perturbation only in the case of linear drift. However,
even for nonlinear problems it is always the case that the addition of
a nonreversible perturbation can accelerate the convergence to
equilibrium, as mentioned in the introduction. This is particularly the case for systems with metastable states and/or multiscale structure~\cite{lelievre-12}; for such systems, a ``clever'' choice of the nonreversible perturbation can lead to a very significant increase in the rate of convergence to equilibrium. A systematic methodology for obtaining the optimal nonreversible perturbation for general reversible diffusions (i.e. not necessarily with a linear drift) will be developed elsewhere.

We illustrate the advantage of adding a nonreversible perturbation to the dynamics by considering a few simple two-dimensional examples. In particular, we consider the nonreversible dynamics
\begin{equation}\label{e:schuette2d}
d X_{t} = (-I+\delta J)\nabla V(X_{t}) \, dt + \sqrt{2 \beta^{-1}} \, dW_{t},
\end{equation}
with $\delta \in \R$ and $J$ the standard $2 \times 2$ antisymmetric
matrix, i.e. $J_{12} = 1, \, J_{21} = -1$. For this class of
nonreversible perturbations the parameter that we wish to choose in an
optimal way is $\delta$. From our numerical experiments, we observed
that even a non-optimal choice of $\delta$ significantly accelerates convergence to equilibrium. To illustrate the effect of adding a nonreversible perturbation, we solve numerically~\eqref{e:schuette2d} using the Euler-Marayama method with a sufficiently small time step and for a sufficiently large number of realizations of the noise. We then compute the expectation value of observables of the solution, in particular, the second moment by averaging over all the trajectories that we have generated.

We  use one of the potentials that were considered
in~\cite{metznerSchutteEijnden06}, namely
\begin{equation}\label{e:potential-ve}
V(x,y) = \frac{1}{4}(x^{2} - 1)^{2} + \frac{1}{2} y^{2}.
\end{equation}
\begin{figure}[!ht]
\begin{center}
\includegraphics[width=1.1\textwidth]{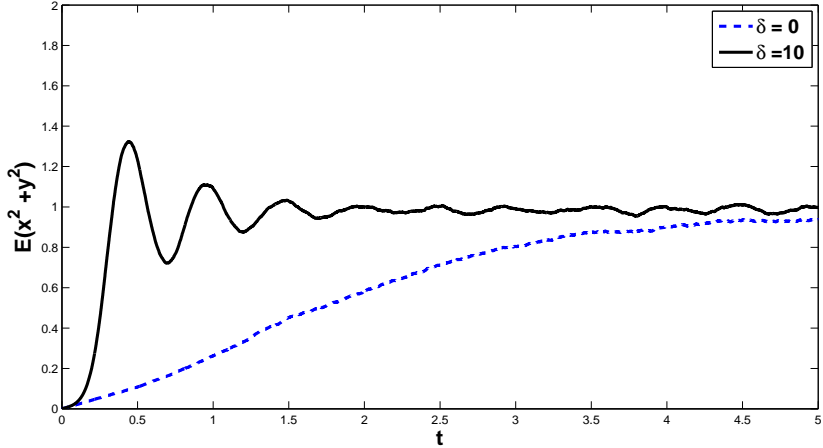}
\end{center}
\caption{Second moment as a function of time for~\eqref{e:schuette2d}
  with the potential~\eqref{e:potential-ve}. We take $0$ as an initial condition and $\beta^{-1} = 0.1$.} \label{fig:2dnonlinearSchuette1}
\end{figure}

In Figure~\ref{fig:2dnonlinearSchuette1} we present the convergence of
the second moment to its equilibrium value for $\beta^{-1} =
0.1$. Even in this very simple example, the addition of a
nonreversible perturbation, with $\delta = 10$, speeds up convergence
to equilibrium. Notice also that, as expected, the nonreversible
perturbation leads to an oscillatory transient behavior.

\appendix{}

\section{Wick calculus}
\label{sec:wickcalculus}

In this article, we use a specific positivity property of the Wick
calculus, which must not be confused with the more general and robust
 positivity
property of the anti-Wick calculus
This appendix recalls the basic
facts about Wick calculus and its positivity property in
$L^{2}(\mathbb{R}^{N}, dx; \mathbb{C})$\,. We refer the reader for details
 to \cite{AmNi1,AmNi2} and references therein. This calculus is modelled on
the creation and annihilation operators,
$a_j^*=-\partial_{x_{j}}+\frac{1}{2}x_{j}$ and
$a_j=\partial_{x_{j}}+\frac{1}{2}x_{j}$, of the $N$-dimensional harmonic
oscillator Hamiltonian
$-\Delta_{x}+\frac{|x|^{2}}{4}-\frac{N}{2}=\sum_{j=1}^{N}a_{j}^{*}a_{j}$\,.
The kernel of this Hamiltonian (the so-called vacuum state) is the
Gaussian function
$$
|\Omega\rangle=\frac{1}{(2\pi)^{N/4}}e^{-\frac{|x|^{2}}{4}}\,.
$$
Let $(e_{1},\ldots,e_{N})$ be an orthonormal basis of
$\mathbb{C}^{N}$ and use the notation
$$
a(e_{j})=\partial_{x_{j}}+\frac{1}{2}x_{j},\quad
a^{*}(e_{j})=-\partial_{x_{j}}+\frac{1}{2}x_{j}\,.
$$
The canonical commutation relations are (for two operators $A$ and
$B$, we denote $[A,B]=AB-BA$ the commutator bracket)
$$
\left[a(e_{i}),
a(e_{j})\right]=\left[a^{*}(e_{i}),a^{*}(e_{j})\right]=0,\,
\quad [a(e_{i}), a^{*}(e_{j})]=\delta_{i,j}
$$
and become by sesquilinearity of the complex scalar product
$$
\forall
z_{1},z_{2}\in\mathbb{C}^{N}\,,\quad\left[a(z_{1}),a^{*}(z_{2})\right]=(
z_{1}\,,\, z_{2})_{\bC}\,,
$$
after setting for $z\in\mathbb{C}^{N}$, $z=\sum_{j=1}^{N}z^{j}e_{j}$
$$
a(z)=\sum_{j=1}^{N}\overline{z^{j}}a(e_{j}),\quad
a^{*}(z)=\sum_{j=1}^{N}z^{j}a^{*}(e_{j})\,.
$$
We use the multi-index notation introduced in Section~\ref{sec:propsemi}.
The orthonormal basis of Hermite functions in
$L^{2}(\R^{N}, dx;\mathbb{C})$ is $(\phi_{n})_{n\in\mathbb{N}^{N}}$ given
by
$$
\phi_{n}=\frac{1}{\sqrt{n_{1}!\ldots
n_{N}!}}a^{*}(e_{1})^{n_{1}}\ldots a^{*}(e_{N})^{n_{N}}|\Omega\rangle
=\frac{1}{\sqrt{n!}}(a^{*}(e))^{n}|\Omega\rangle\,.
$$
Since the basis of
 the subspace $\bigoplus_{|n|=p}\mathbb{C}\phi_{n}$ is by construction
 indexed by the 
 $n$'s in $\mathbb{N}^{N}$ such that $|n|=p$\,, it can be
considered as the $p$-fold symmetric tensor product of $\mathbb{C}^{N}$\,:
\begin{eqnarray*} && \bigoplus_{|n|=p}\mathbb{C}\phi_{n} =
\bigvee^{p}\mathbb{C}^{N}=\mathcal{S}_{p}
\underbrace{\left[\mathbb{C}^{N}\otimes\cdots \otimes
\mathbb{C}^{N}\right]}_{p~\text{times}}\,, \\ \text{with}&&
\mathcal{S}_{p}(z_{1}\otimes\cdots\otimes z_{p})=\frac{1}{p!}
\sum_{\sigma\in \mathfrak{S}_{p}}z_{\sigma(1)}\otimes\cdots\otimes
z_{\sigma(p)}\,.
\end{eqnarray*} The symmetrization operator $\mathcal{S}_{p}$ is
actually the orthogonal projection from $(\mathbb{C}^{N})^{\otimes p}$
onto $\bigvee^{p}\mathbb{C}^{N}$\,.  This provides the description of
$L^{2}(\R^{N}, dx;\mathbb{C})$ as the so-called bosonic Fock space over
$\mathbb{C}^{N}$
$$
L^{2}(\R^{N}, dx;\mathbb{C})=\bigoplus_{p=0}^{\infty}\bigvee^{p}\mathbb{C}^{N}\,,
$$
where the infinite sum is orthogonal and complete.  It is convenient
to introduce also the algebraic orthogonal sum
$$
\mathcal{D}=\bigoplus_{p\in\mathbb{N}}^{alg}\bigvee^{p}\mathbb{C}^{N}=\mathbb{C}[x_{1},\ldots,x_{N}]e^{-\frac{|x|^{2}}{4}}\,.
$$
We now consider polynomials of $2N$ real coordinates $(x,y)$ with $z=x+iy$\,, written in the
complex notation $(z,\overline{z})$ as elements of
$$
\mathbb{C}[\overline{z^{1}},\ldots, \overline{z^{N}},z^{1},\ldots,
z^{N}]=\bigoplus_{(p,q)\in\mathbb{N}^{2}}^{alg}
\mathbb{C}_{q,p}[\overline{z^{1}},\ldots,
\overline{z^{N}},z^{1},\ldots, z^{N}]\,,
$$ 
where $\mathbb{C}_{q,p}[\overline{z^{1}},\ldots,
\overline{z^{N}},z^{1},\ldots, z^{N}]$ denote the set of monomials homogeneous with degree
$q\in \mathbb{N}$ with respect to $\overline{z}$ and homogeneous with degree $p \in \mathbb{N}$
with respect to $z$. Notice that monomials $b\in \mathbb{C}_{q,p}[\overline{z^{1}},\ldots,
\overline{z^{N}},z^{1},\ldots, z^{N}]$
can be written
$$
b(z)=( z^{\otimes q}\,,\, \tilde{b}z^{\otimes
p})_{\bC} \quad\text{with}\quad \tilde{b}\in
L\left(\bigvee^{p}\mathbb{C}^{N};\bigvee^{q}\mathbb{C}^{N} \right)\,.
$$
This provides a bijection between the sets
$\mathbb{C}_{q,p}[\overline{z^{1}},\ldots,
\overline{z^{N}},z^{1},\ldots, z^{N}]$ and
$L(\bigvee^{p}\mathbb{C}^{N};\bigvee^{q}\mathbb{C}^{N})$ with the
inversion formula
$$
\tilde{b}=\frac{1}{q!p!}\partial_{\overline{z}}^{q}
\, \partial_{z}^{p} \, b\,.
$$
With any monomial $b\in \mathbb{C}_{q,p}[\overline{z^{1}},\ldots,
\overline{z^{N}},z^{1},\ldots, z^{N}]$ (and by linearity with any
polynomial of $(z,\overline{z})$) we can associate an operator
$b^{Wick}:\mathcal{D}\to \mathcal{D}\subset L^{2}(\R^{N}, dx;\mathbb{C})$
called its Wick quantization:\\
 When $b\in
\mathbb{C}_{q,p}[\overline{z^{1}},\ldots,
\overline{z^{N}},z^{1},\ldots, z^{N}]$, and for any $n\in\mathbb{N}$
its restriction 
 $b^{Wick}\big|_{\bigvee^{n}\mathbb{C}^{N}} :
\bigvee^{n}\mathbb{C}^{N}\to \bigvee^{n+q-p}\mathbb{C}^{n+q-p}$ is
defined by
\begin{equation}
  \label{eq:defWick}
b^{Wick}\big|_{\bigvee^{n}\mathbb{C}^{N}}=1_{[p,+\infty)}(n)\frac{\sqrt{n!(n+q-p)!}}{(n-p)!}\mathcal{S}_{n+q-p}
(\tilde{b}\otimes I_{\bigvee^{n-p}\mathbb{C}^{N}})\mathcal{S}_{n}\,.
\end{equation}
Here are a few examples
\begin{itemize}
\item if $b(z)=( \xi \,,\,z)_{\bC}$\,, $\xi \in
\mathbb{C}^{N}$\,, then $b^{Wick}=a(\xi)$\,;
\item if $b(z)=( z \, , \, \zeta )_{\bC}$, $\zeta\in \mathbb{C}^{N}$\,,
then $b^{Wick}=a^{*}(\zeta)$\,; 
\item if $b(z)=( z\,,\, A z)_{\bC}$ with $A\in
L(\mathbb{C}^{N};\mathbb{C}^{N})$\,, then one recovers the second
quantized version of $A$
  \begin{eqnarray*} &&b^{Wick}=
d\Gamma(A)=\sum_{j,k=1}^{N}A_{j,k}a^{*}(e_{j})a(e_{k})\,,\\
&&d\Gamma(A)\big|_{\bigvee^{n}\mathbb{C}^{N}}=
\sum_{j=0}^{n-1}I_{\mathbb{C}^{N}}^{\otimes j}\otimes A\otimes
I_{\mathbb{C}^{N}}^{\otimes n-1-j}\,;
\end{eqnarray*}
\item if $b(z)= \prod_{k=1}^{q} (
z\,,\,\zeta_{k})_{\bC} \times\prod_{j=1}^{p} (\xi_{j}\,,\,
z)_{\bC}$\,, so that the associated linear function in $\tilde{b}  \in L
\left(\bigvee^{p}\mathbb{C}^{N};\bigvee^{q}\mathbb{C}^{N} \right)$ is
$$
\tilde{b}=\mathcal{S}_{q}(|\zeta_{1} \rangle \otimes\cdots\otimes|\zeta_{q}\rangle)\otimes
\mathcal{S}_{p}( \langle \xi_{1}|\otimes\cdots\otimes \langle\xi_{p}|)\,,
$$
and then
\begin{equation}
  \label{eq:caspart} b^{Wick}=a^{*}(\zeta_{1})\ldots
a^{*}(\zeta_{q})a(\xi_{1})\ldots a(\xi_{p})\,.
\end{equation}
\end{itemize} For any polynomial $b\in
\mathbb{C}[\overline{z^{1}},\ldots, \overline{z^{N}},z^{1},\ldots,
z^{N}]$\,, $k\in\mathbb{N}$ and any $z\in \mathbb{C}^{N}$, the $k$-th
order differential $(\partial_{z}^{k}b)(z)$ is a $\mathbb{C}$-linear
form on $\bigvee^{k}\mathbb{C}^{N}$ while
$(\partial^{k}_{\overline{z}}b)(z)$ is a $\mathbb{C}$-antilinear form
which can be identified with a vector via the complex scalar product
$$
\partial^{k}_{\overline{z}}b(z): u\in\bigvee^{k}\mathbb{C}^{N}\mapsto
( u\,,\partial^{k}_{\overline{z}}b(z))_{\bC}\,.
$$
We use the notation $\ell.v$ for the $\mathbb{C}$-bilinear duality
product between $\ell\in (\bigvee^{k}\mathbb{C}^{N})^{*_{\mathbb{C}}}$
and $v\in\bigvee^{k}\mathbb{C}^{N}$\,. For any $b_{1},b_{2}\in
\mathbb{C}[\overline{z^{1}},\ldots, \overline{z^{N}},z^{1},\ldots,
z^{N}]$, $k\in\mathbb{N}$ and all $z\in \mathbb{C}^{N}$\,, the
quantity
$\partial_{z}^{k}b_{1}(z).\partial^{k}_{\overline{z}}b_{2}(z)$ is well
defined in $\mathbb{C}$ and this defines a new polynomial
$$
\partial_{z}^{k}b_{1}.\partial^{k}_{\overline{z}}b_{2}\in
\mathbb{C}[\overline{z^{1}},\ldots, \overline{z^{N}},z^{1},\ldots,
z^{N}]\,.
$$
\begin{proposition}
\label{prop:Wickcalcul}
\begin{enumerate}
\item For any monomial $b\in \mathbb{C}_{p,p}[\overline{z^{1}},\ldots,
\overline{z^{N}},z^{1},\ldots, z^{N}]$ such that $\tilde{b}\geq 0$,
the Wick quantized operator $b^{Wick}$ is non negative on
$\mathcal{D}$:
$$
\forall \varphi\in \mathcal{D}\,,\quad \langle \varphi\,,\,
b^{Wick}\varphi\rangle_{L^2} \geq 0\,.
$$
\item When $b\in \mathbb{C}[\overline{z^{1}},\ldots,
\overline{z^{N}},z^{1},\ldots, z^{N}]$ the formal adjoint of
$b^{Wick}$ defined on $\mathcal{D}$ by
$$
\forall \varphi,\psi\in \mathcal{D}\,,\quad \langle\varphi\,,\,
(b^{Wick})'\psi\rangle_{L^2}=\langle b^{Wick}\varphi\,,\, \psi\rangle_{L^2}
$$
is given by $(b^{Wick})^{'}=(\overline{b})^{Wick}$\,.
\item The set of polynomials $\mathbb{C}[\overline{z^{1}},\ldots,
\overline{z^{N}},z^{1},\ldots, z^{N}]$ is an algebra for the operation
\begin{eqnarray}
\nonumber&&(b_{1}\sharp^{Wick}b_{2})^{Wick}=b_{1}^{Wick}\circ
b_{2}^{Wick}:\mathcal{D}\to \mathcal{D} \\
\label{eq:Wickprod} \text{with}&&
b_{1}\sharp^{Wick}b_{2}=\sum_{k=0}^{\infty}\frac{1}{k!}\partial_{z}^{k}b_{1}.\partial_{\overline{z}}^{k}b_{2}\,,
\end{eqnarray} where the sum in the right-hand side is actually
finite.
\end{enumerate}
\end{proposition}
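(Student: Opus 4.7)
The plan is to prove the three items of Proposition~\ref{prop:Wickcalcul} in sequence, using the explicit formulas~\eqref{eq:defWick} and~\eqref{eq:caspart} together with the canonical commutation relation $[a(\xi),a^{*}(\zeta)]=(\xi,\zeta)_{\bC}$.

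For part~1, the point is that $b\in\mathbb{C}_{p,p}[\overline{z},z]$ corresponds to $q=p$ in~\eqref{eq:defWick}, so $b^{Wick}$ preserves each homogeneous subspace $\bigvee^{n}\mathbb{C}^{N}$, acting as $\frac{n!}{(n-p)!}\mathcal{S}_{n}(\tilde b\otimes I_{\bigvee^{n-p}\mathbb{C}^{N}})\mathcal{S}_{n}$ when $n\geq p$ and as $0$ otherwise. Since $\mathcal{S}_{n}\varphi=\varphi$ for $\varphi\in\bigvee^{n}\mathbb{C}^{N}$, one gets $\langle\varphi,b^{Wick}\varphi\rangle_{L^{2}}=\frac{n!}{(n-p)!}\langle\varphi,(\tilde b\otimes I)\varphi\rangle\geq 0$, where the inequality uses that $\tilde b\geq 0$ forces $\tilde b\otimes I\geq 0$. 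Positivity on all of $\mathcal{D}$ then follows from the orthogonality of the distinct homogeneous components in the Fock decomposition.

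For part~2, by linearity I reduce to generator monomials of the form~\eqref{eq:caspart}. The basic input is $a(e_{j})^{*}=a^{*}(e_{j})$, which combined with the sesquilinear/linear coefficient conventions of $a(\zeta)$ and $a^{*}(\zeta)$ yields $a(\zeta)^{*}=a^{*}(\zeta)$. Combined with $\overline{(z,\zeta)_{\bC}}=(\zeta,z)_{\bC}$, taking the adjoint of the string in~\eqref{eq:caspart} reverses the order, but the mutual commutativity of the $a^{*}$'s (and similarly of the $a$'s) restores the canonical form, matching $(\overline b)^{Wick}$.

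For part~3, which will be the main obstacle, I again reduce to monomials~\eqref{eq:caspart} by sesquilinearity. Writing $b_{1}^{Wick}\circ b_{2}^{Wick}$ as a concatenated string of creation and annihilation operators, I would iteratively use the commutation relation to move each annihilation factor of $b_{1}^{Wick}$ past the creation factors of $b_{2}^{Wick}$. Each such commutation produces a scalar contraction $(\xi_{j},\zeta_{k})_{\bC}$ plus a reordered operator; after collecting all contributions where exactly $k$ pairs have been contracted (for $k=0,1,\dots,\min(p_{1},q_{2})$), one obtains a Wick-ordered operator whose symbol must be identified with the $k$-th term of $\sum_{k}\frac{1}{k!}\partial_{z}^{k}b_{1}\cdot\partial_{\overline z}^{k}b_{2}$. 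The hard part will be the combinatorial identification: $\partial_{z}^{k}b_{1}$ extracts the tensor of $a$-type directions $\xi_{j}$ of $b_{1}$ symmetrized over the $k$ selected indices, $\partial_{\overline z}^{k}b_{2}$ does the analogous extraction on the creation side of $b_{2}$, and the $\frac{1}{k!}$ precisely compensates for the symmetry of the contracted pairs. A cleaner alternative worth exploring, should the direct bookkeeping become cumbersome, is to test both sides against the coherent states $|\alpha\rangle$ satisfying $a(\xi)|\alpha\rangle=(\xi,\alpha)_{\bC}|\alpha\rangle$, which reduces the operator identity to a polynomial identity in $(\alpha,\overline\alpha)$ where Taylor expansion makes the $\frac{1}{k!}$ transparent; the finiteness of the sum is automatic from $k\leq\min(p_{1},q_{2})$.
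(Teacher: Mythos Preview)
Your proposal is correct and largely parallels the paper's proof. Parts~1 and~2 match the paper's arguments almost exactly; your version of part~2 is phrased via the concrete adjoint of a creation-annihilation string rather than via the adjoint $\tilde b^{*}$ of the linear map in~\eqref{eq:defWick}, but the content is the same.

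For part~3, the paper follows the same plan you outline (reduce to monomials of the form~\eqref{eq:caspart} and iterate the CCR), but it inserts one extra simplification that avoids exactly the ``hard part'' you flag: after reducing to~\eqref{eq:caspart}, it invokes the polarization identities
\[
\mathcal{S}_{q}(|\zeta_{1}\rangle\otimes\cdots\otimes|\zeta_{q}\rangle)=\frac{1}{2^{q}q!}\sum_{\varepsilon_{j}=\pm 1}\varepsilon_{1}\cdots\varepsilon_{q}\Big(\sum_{j=1}^{q}\varepsilon_{j}|\zeta_{j}\rangle\Big)^{\otimes q}
\]
(and similarly on the $\xi$ side) to further reduce to the special case $b_{i}^{Wick}=[a^{*}(\zeta^{i})]^{q_{i}}[a(\xi^{i})]^{p_{i}}$, i.e.\ a \emph{single} creation direction and a \emph{single} annihilation direction per factor. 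In that situation the iterated application of $[a(\xi^{1}),a^{*}(\zeta^{2})]=(\xi^{1},\zeta^{2})_{\bC}$ yields the formula~\eqref{eq:Wickprod} by a one-variable binomial/Leibniz count, with no multi-index bookkeeping. Your direct contraction-counting or coherent-state argument would also succeed, but the polarization trick is the shortest route and may be worth adopting.
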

\begin{proof} 1) It comes from the definition \eqref{eq:defWick} with
$p=q$\,. Actually for any $n\in\mathbb{N}$, $\tilde{b}\otimes
I_{\bigvee^{n}\mathbb{C}^{N}}$ is non negative and
$$
\mathcal{S}_{n}\left(\tilde{b}\otimes
I_{\bigvee^{n}\mathbb{C}^{N}}\right)\mathcal{S}_{n}=
\mathcal{S}_{n}^{*}\left(\tilde{b}\otimes
I_{\bigvee^{n}\mathbb{C}^{N}}\right)\mathcal{S}_{n} \geq 0\,.
$$
For $\varphi=\oplus_{n=0}^{n_{max}}\varphi_{n}\in \mathcal{D}$ we get
$$
\left\langle\varphi\,,\,
b^{Wick}\varphi \right\rangle_{L^2}=\sum_{n=0}^{n_{max}}\left\langle\varphi_{n}\,,\,
b^{Wick}\big|_{\bigvee^{n}\mathbb{C}^{N}}\varphi_{n}\right\rangle_{L^2}\geq 0\,.
$$
2) It results from the definition \eqref{eq:defWick} after noticing
$$
\overline{b(z)}=\overline{( z^{\otimes q}\,,\,
\tilde{b}z^{\otimes p})_{\bC}}= ( z^{\otimes p}\,,\,
\tilde{b}^{*}z^{\otimes q})_{\bC},
$$
where $\tilde{b}^{*}$ is the adjoint of $\tilde{b}\in
\mathcal{L}(\bigvee^{p}\bC^{N};\bigvee^{p}\bC^{N})$\,.\\
3)
The definition~\eqref{eq:defWick} ensures that for $b\in
\mathbb{C}[\overline{z^{1}},\ldots, \overline{z^{N}},z^{1},\ldots,
z^{N}]$\,, the operator $b^{Wick}$ sends $\mathcal{D}$ into itself, so
that $b_{1}^{Wick}\circ b_{2}^{Wick}$ is well defined. By linearity
the result comes from considering the specific case when
$b_{1}^{Wick}$ and $b_{2}^{Wick}$ have the form
\eqref{eq:caspart}. With the polarization identities
\begin{eqnarray*}
\mathcal{S}_{q}(|\zeta_{1}\rangle\otimes\cdots\otimes
|\zeta_{q}\rangle)=\frac{1}{2^{q}q!}\sum_{\varepsilon_{j}=\pm
1}
\varepsilon_{1}\cdots \varepsilon_{q}
\left(\sum_{j=1}^{q}\varepsilon_{j}|\zeta_{j}\rangle\right)^{\otimes
q}\,,\\ \mathcal{S}_{p}(\langle
\xi_{1}|\otimes\cdots\otimes\langle\xi_{p}|)=
\frac{1}{2^{p}p!}\sum_{\varepsilon_{j}=\pm
1}
\varepsilon_{1}\cdots \varepsilon_{p}
\left(\sum_{j=1}^{p}\varepsilon_{j}\langle\xi_{j}|\right)^{\otimes
p}\,,\\
\end{eqnarray*} the problem is reduced to
$b_{i}^{Wick}=[a^{*}(\zeta^{i})]^{q_{i}}[a(\xi^{i})]^{p_{i}}$ for
$i=1,2$\,. But this is a simple iterated application of
$\left[a(\xi^{1}),a^{*}(\zeta^{2})\right]=(\xi^{1}\,,\,
\zeta^{2})_{\bC}$.
\end{proof}

A useful consequence of the properties stated above for our case  is the
following lemma. It provides a lower bound for a differential operator
with a specific quartic symbol. Of course, it is in a very specific
case but it is much stronger than what would give the Feffermann-Phong
inequality. Therefore, it is probably not easily accessible via the
Weyl or anti-Wick calculus (see \cite{HorIII,Ler}).
\begin{lemma}\label{lem:example}
Let $S,Q,\tilde{J}$ be real matrices such that $S \in {\mathcal S}_N^{>0}(\R)$,
$Q \in {\mathcal S}_N^{>0}(\R)$ and $\tilde{J} \in {\mathcal A}_N(\R)$. Let us consider the
operator $L=-\tilde{\mathcal L}_J$ (see Equation~\ref{eq:deftLJ} for
the definition) and
$C=C_Q$ (see Equation~\eqref{eq:CQ}). The operator $L$ (respectively $C$) is
the Wick quantization of the polynomial $\ell (z)=( z\,,\, Sz)_{\bC} - (
z\,,\,\tilde{J}z)_{\bC}$ (respectively $p(z)=( z\,,\, Q
z)_{\bC}$). Moreover, we have the following estimate: $\forall \varphi\in \mathcal{D}$,
\begin{equation}
  \label{eq:ineqexample}
\langle \varphi\,,\,
(L^{*}C+CL)\varphi\rangle_{L^2}
\geq 
\left\langle \varphi\,,\, \left( \left( z \,,\,
\left[SQ+QS+\tilde{J}Q-Q\tilde{J}\right]z \right)_{\bC}\right)^{Wick}\varphi \right\rangle_{L^2}.
\end{equation}
\end{lemma}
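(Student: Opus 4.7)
The plan is to compute the symbol of $L^{*}C+CL$ exactly using the Wick composition formula, isolate a manifestly non-negative quartic part, and recognize the remaining quadratic part as the one appearing in the right-hand side of~\eqref{eq:ineqexample}.

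First I would identify the Wick symbols. From~\eqref{eq:WicktLJ}, $L=-\tilde{\mathcal{L}}_{J}=a^{*,T}(S-\tilde{J})a$, which is by definition the Wick quantization of $\ell(z)=( z,(S-\tilde{J})z)_{\bC}=( z,Sz)_{\bC}-( z,\tilde{J}z)_{\bC}$. By Proposition~\ref{prop:Wickcalcul}-2, $L^{*}$ is the Wick quantization of $\overline{\ell}(z)$, and using $S^{T}=S$, $\tilde{J}^{T}=-\tilde{J}$ and both matrices being real, one gets $\overline{\ell}(z)=( z,Sz)_{\bC}+( z,\tilde{J}z)_{\bC}$. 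Similarly $C=C_{Q}=a^{*,T}Qa$ is the Wick quantization of $p(z)=( z,Qz)_{\bC}$.

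Next I would apply the composition formula~\eqref{eq:Wickprod} from Proposition~\ref{prop:Wickcalcul}-3. Since $\overline{\ell}$, $\ell$ and $p$ all lie in $\mathbb{C}_{1,1}[\overline{z},z]$ (linear in $z$ and linear in $\overline{z}$), the expansion $b_{1}\sharp^{Wick}b_{2}=\sum_{k\geq 0}\frac{1}{k!}\partial_{z}^{k}b_{1}\cdot\partial_{\overline{z}}^{k}b_{2}$ truncates at $k=1$ (the $k=2$ terms vanish because $\partial_{z}^{2}$ kills any such bilinear polynomial). A direct calculation, using that for $b(z)=(z,Az)_{\bC}$ one has $\partial_{z}b=\overline{z}^{T}A$ and $\partial_{\overline{z}}b=Az$, gives
\begin{align*}
\overline{\ell}\sharp^{Wick}p &= \overline{\ell}(z)\,p(z) + ( z,(S+\tilde{J})Q\,z)_{\bC},\\
p\sharp^{Wick}\ell &= p(z)\,\ell(z) + ( z,Q(S-\tilde{J})z)_{\bC}.
\end{align*}
Adding them and using $\overline{\ell}(z)+\ell(z)=2(z,Sz)_{\bC}$, the Wick symbol of $L^{*}C+CL$ is
\[
2( z,Sz)_{\bC}( z,Qz)_{\bC} + ( z,[SQ+QS+\tilde{J}Q-Q\tilde{J}]z)_{\bC}.
\]

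The crux is then the positivity of the quartic term $r(z)=2( z,Sz)_{\bC}( z,Qz)_{\bC}\in\mathbb{C}_{2,2}[\overline{z},z]$. Writing $r(z)=( z^{\otimes 2},\tilde{r}\,z^{\otimes 2})_{\bC}$ with $\tilde{r}=2\,\mathcal{S}_{2}(Q\otimes S)\mathcal{S}_{2}$ acting on $\bigvee^{2}\mathbb{C}^{N}$, and observing that $Q\otimes S\geq 0$ on $\mathbb{C}^{N}\otimes\mathbb{C}^{N}$ (since both $Q$ and $S$ are positive definite, their spectral decompositions give $Q\otimes S=\sum q_{i}s_{j}|e_{i}\otimes f_{j}\rangle\langle e_{i}\otimes f_{j}|\geq 0$), we get $\tilde{r}=2\mathcal{S}_{2}^{*}(Q\otimes S)\mathcal{S}_{2}\geq 0$. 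Proposition~\ref{prop:Wickcalcul}-1 then yields $\langle\varphi,r^{Wick}\varphi\rangle_{L^{2}}\geq 0$ for all $\varphi\in\mathcal{D}$.

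Combining the computation of the symbol with this positivity gives exactly the inequality~\eqref{eq:ineqexample}. The main (and really only) subtle point is the step verifying that $\tilde{r}$ is a non-negative operator on $\bigvee^{2}\mathbb{C}^{N}$: one must resist the temptation of treating $r(z)$ as a scalar product of scalar quantities and instead keep track of its tensorial structure; once the factorization $Q\otimes S$ is identified, positivity is immediate from $Q,S\geq 0$. Everything else is bookkeeping within the Wick calculus developed in the appendix.
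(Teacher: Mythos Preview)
Your proof is correct and follows essentially the same route as the paper: compute the Wick symbol of $L^{*}C+CL$ via the composition formula~\eqref{eq:Wickprod}, identify the quartic part as coming from a non-negative operator $Q\otimes S$ (the paper writes it equivalently as $S\otimes Q+Q\otimes S$) on the symmetric tensor product, and invoke Proposition~\ref{prop:Wickcalcul}-1 for its positivity. Your treatment is slightly more explicit in two places (the computation of $\overline{\ell}$ and the verification that $\tilde r=2\mathcal{S}_{2}(Q\otimes S)\mathcal{S}_{2}\geq 0$), but the argument is the same.
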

\begin{proof}
The fact that
$L=(\ell(z))^{Wick}$ and  $C=(p(z))^{Wick}$
is easy to check. Notice that the polynomials $\ell$ and $p$ satisfy
$$
{\rm Re}~\ell(z)=( z\,,\, Sz)_{\bC}\,,\quad
{\rm Im}~\ell(z)=-\frac{1}{i}(z\,, \tilde{J} z)_{\bC}\,,\quad
\overline{p(z)}=p(z)\,.
$$
We are looking for a lower bound for $L^{*}C+CL$. Using
formula~\eqref{eq:Wickprod}, the Wick symbol of $L^{*}C+CL$ is
\begin{multline*} \overline{\ell(z)} \, p(z)+p(z) \, \ell(z)
+\partial_{z}\overline{\ell(z)} \,. \, \partial_{\overline
z}p(z)+ \partial_{z}p(z) \,. \, \partial_{\overline z}\ell(z) \\ 
=\left(
z^{\otimes 2}\,,\, (S\otimes Q+Q\otimes S)z^{\otimes 2} \right)_{\bC} +
\left( z\,,\, \left(SQ+QS+\tilde{J}Q-Q\tilde{J}\right)z \right)_{\bC}.
\end{multline*}
Since $S$ and $Q$ are non negative matrices, we deduce
that $S\otimes Q$ and $Q\otimes S$ are non negative and the first term
is thus non negative. By applying the first statement of Proposition~\ref{prop:Wickcalcul},
one obtains~\eqref{eq:ineqexample}.
\end{proof}

\begin{acknowledgements}
We would like to thank Matthieu Dubois for preliminary numerical experiments.
\end{acknowledgements}



\end{document}